\documentclass[12pt]{amsart}
\usepackage[utf8]{inputenc}
\usepackage[shortlabels]{enumitem}
\usepackage[margin=3.0cm]{geometry}
\usepackage{amssymb}
\usepackage{amsthm}
\usepackage{comment}
\usepackage{latexsym}
\usepackage[all,cmtip,arrow,2cell]{xy}
\usepackage{mathtools}
\usepackage{graphicx}
\usepackage{hyperref}
\usepackage[dvipsnames]{xcolor}
\usepackage[normalem]{ulem}

\hyphenation{group-oid group-oids quasi-hori-zon-tal e-di-tion}

\theoremstyle{plain}
\newtheorem{Theorem}{Theorem}[section]
\newtheorem{Proposition}[Theorem]{Proposition}
\newtheorem{Lemma}[Theorem]{Lemma}
\newtheorem{Corollary}[Theorem]{Corollary}

\newtheorem{Definition}[Theorem]{Definition}

\theoremstyle{definition}

\newtheorem{Remark}[Theorem]{Remark}
\newtheorem{Example}[Theorem]{Example}

\newcommand{\bbR}{\mathbb{R}}

\newcommand{\frakg}{\mathfrak{g}}

\newcommand{\frakX}{\mathfrak{X}}
\renewcommand{\phi}{\varphi}

\DeclareMathOperator{\id}{id}
\DeclareMathOperator{\Lie}{\mathcal{L}}

\DeclareMathOperator{\Empty}{{\_\!\_\,}}

\newcommand{\DA}{d_A}
\newcommand{\chD}{\check{D}}



\newcommand{\Pish}{{\Pi^\sharp}} 
\newcommand{\omfl}{{\omega_\flat}}


\newcommand{\TorA}{T_{\!A}}
\newcommand{\TorTM}{T_{TM}}
\newcommand{\TorTxM}{T_{T^* M}}

\newcommand{\scal}[2]{\langle #1,#2 \rangle} 
\newcommand{\del}{\partial}
\newcommand{\pder}[2][]{\frac{\partial#1}{\partial#2}} 

\begin{document}

\title{Hamiltonian Lie algebroids over Poisson manifolds}

\author[C.~Blohmann]{Christian Blohmann}
\address{Max-Planck-Institut f\"ur Mathematik, Vivatsgasse 7, 53111 Bonn, Germany}
\email{blohmann@mpim-bonn.mpg.de}

\author[S.~Ronchi]{Stefano Ronchi}
\address{Julius-Maximilians-Universit\"at W\"urzburg, Institute of Mathematics, Emil-Fischer-Stra{\ss}e~31, 97074 W\"urzburg, Germany, and Mathematisches Institut, Universit\"at G\"ot\-tin\-gen, Bunsenstr.~3, 37073 G\"ottingen, Germany}
\email{stefano.ronchi@uni-goettingen.de}

\author[A.~Weinstein]{Alan Weinstein}
\address{Department of Mathematics, University of California, Berkeley, CA 94720, USA, and Department of Mathematics, Stanford University, Stanford,  CA 94305, USA}
\email{alanw@math.berkeley.edu}

\subjclass[2020]{53D17, 53D20, 37J06, 37J37}

\date{\today}

\keywords{Poisson manifold, Lie algebroid, hamiltonian action, momentum map}

\begin{abstract} 
We extend to Poisson manifolds the theory of hamiltonian Lie algebroids originally developed by two of the authors for presymplectic manifolds.  As in the presymplectic case, our definition, involving a vector bundle connection on the Lie algebroid, reduces to the definition of hamiltonian action for an action Lie algebroid with the trivial connection. The clean zero locus of the momentum section of a hamiltonian Lie algebroid is an invariant coisotropic submanifold, the distribution being given by the image of the anchor. We study some basic examples: bundles of Lie algebras with zero anchor and cotangent and tangent Lie algebroids.  Finally, we discuss a suggestion by Alejandro Cabrera that the conditions for a Lie algebroid $A$ to be hamiltonian may be expressed in terms of two bivector fields on $A^*$, the natural Poisson structure on the dual of a Lie algebroid and the horizontal lift by the connection of the given Poisson structure on the base.
\end{abstract} 
\maketitle


\tableofcontents

\section{Introduction}

In \cite{BlohmannWeinstein:HamLA}, two of the authors have introduced a notion of hamiltonian Lie algebroid over a presymplectic manifold. In the present paper, we will introduce the analogous concept for Poisson manifolds. As in the presymplectic case, the definition consists of three conditions, each of which generalizes a standard condition in the case where a Lie algebroid is the  action Lie algebroid $\mathfrak g\times M$ associated with an action of a Lie algebra $\mathfrak g$ on a Poisson manifold $M$.

Recall that an action $\rho: \frakg \to \frakX(M)$ of a Lie algebra $\frakg$ on a Poisson manifold $(M, \Pi)$ is called hamiltonian when the following three conditions are satisfied for all $a,b \in \frakg$:

\begin{enumerate}[(H1)]

\item $\Pi$ is invariant, $\Lie_{\rho a} \Pi = 0$,

\item there is a momentum map $\mu:M \to \frakg^*$, $\rho a = \iota_{d\langle \mu, a \rangle} \Pi$,

\item
$\mu$ is equivariant, $\langle \mu, [a,b] \rangle = \rho a \cdot \langle \mu, b\rangle$.

\end{enumerate}

To express these conditions in terms of the Lie algebroid $A = \frakg \times M$, we make the same observations as in \cite{BlohmannWeinstein:HamLA}. It is rather obvious that the action is given by the anchor $\rho: A \to TM$ and that the momentum map can be viewed as a section of the dual vector bundle $A^*$. It is less obvious that the invariance of $\Pi$ and the properties of $\mu$ must not be required for all sections of $A$, but only for the constant ones, which can be identified with the elements of the Lie algebra $\frakg$.

For a general Lie algebroid, there is no natural notion of constant sections. We therefore have to add to our structure a linear connection on $A$ and require the properties of $\rho$ and $\mu$ to hold for all sections which are horizontal, either locally if the connection is flat or pointwise if it has curvature (Section~\ref{sec:Pointwise}). While this approach is geometrically well-motivated, it  does not lead to the most useful characterization of hamiltonian Lie algebroids (Corollary~\ref{cor:H1PoissonPW} and Proposition~\ref{prop:PointHam}). Instead, we will express the definition in terms of equations to be satisfied by tensors involving the covariant derivatives under the connection and under the opposite $A$-connection, whose definition we now recall. 

When $A$ is a Lie algebroid with anchor $\rho$, the {\em opposite} $A$-connection on $TM$ is defined by
\begin{equation}
\label{eq:DcheckDef}
  \check{D}_a v 
  = [\rho a, v] + \rho( D_v a )
  \,.
\end{equation}

Recall also that a linear connection on a vector bundle $A \to M$ gives rise to a covariant derivative $D$ on sections of not only $A$, but also of its dual $A^*$ and their tensor products. Let $a$ be a section of $A$, $\mu$ a section of $A^*$, and $v$ a vector field on the base $M$. The covariant derivative satisfies the Leibniz rule
\begin{equation}
\label{eq:DPairing}
  v \cdot \langle \mu, a \rangle
  = \langle D_v \mu, a \rangle
  + \langle \mu, D_v a \rangle
  \,,
\end{equation}
which can be viewed as definition of the {\em dual} connection on $A^*$.

In particular, the dual of the opposite $A$-connection, also denoted by $
\chD$, is defined by the Leibniz rule
\begin{equation}
\label{eq:DcheckPairing}
  \rho a \cdot \langle \beta, v \rangle
  = \langle \chD_a \beta, v \rangle
  + \langle \beta, \chD_a v \rangle
  \,,
\end{equation}
where $\beta$ is a 1-form on $M$.

\begin{Remark}
We are  using the same notation $D$ for the covariant derivative of sections of $A$ and $A^*$. Analogously, we use the same letter for the action of $\chD$ on forms and multivector fields. This is consistent with the standard notation for other operations that exist on both differential forms and multivector fields, such as the Lie derivative.    
\end{Remark}

We now define the principal concepts of this paper.  We will show in Section \ref{sec:Pointwise} that these are equivalent to the pointwise definitions mentioned above.

\begin{Definition}
\label{def:HamLAPoisson}
Let $(A,\rho, [~,~])$ be a Lie algebroid over a Poisson manifold $(M,\Pi)$. Let $D$ be the covariant derivative of a linear connection on $A$.
\begin{itemize}

\item[(H1$_{\mathrm {Poi}}$)]
$A$ is \textbf{Poisson anchored} with respect to $D$ if
\begin{equation*}
  \check{D}\Pi = 0 \,.
\end{equation*}

\item[(H2$_{\mathrm {Poi}}$)]
A section $\mu \in \Gamma(A^*)$ is a \textbf{$D$-momentum section} if
\begin{equation*}
  \rho a = \iota_{\langle D\mu, a \rangle} \Pi
\end{equation*}
for all sections $a \in \Gamma(A)$.

\item[(H3$_{\mathrm {Poi}}$)]
A $D$-momentum section $\mu$ is \textbf{bracket-compatible} if
\begin{equation*}
  (\DA \mu)(a,b) = 
  \Pi\bigl( \langle D\mu, a\rangle, \langle D\mu, b \rangle \bigr)
\end{equation*}
for all sections $a,b \in \Gamma(A)$, where $\DA$ is the Lie algebroid differential, a {degree-1} differential on the graded algebra $\Gamma(\wedge^\bullet A^*)$.
\end{itemize}
A Lie algebroid together with a connection $D$ and a section $\mu$ of $A^*$ satisfying (H1$_{\mathrm {Poi}}$) and (H2$_{\mathrm {Poi}}$) is called \textbf{weakly hamiltonian}. It is called \textbf{hamiltonian} if it satisfies (H3$_{\mathrm {Poi}}$) as well.
\end{Definition}

Definition~\ref{def:HamLAPoisson} satisfies the following two basic conditions. First, if $A = \frakg \times M$ is an action Lie algebroid equipped with the trivial connection over a Poisson manifold, we recover the usual notion of a hamiltonian action. Second, if the Poisson bivector is non-degenerate, the definition is equivalent to that presented in \cite{BlohmannWeinstein:HamLA} for the presymplectic case, which can be stated in the following form:

\begin{Definition}
\label{def:HamLAPresymplectic}
Let $(A,\rho, [~,~])$ be a Lie algebroid over a presymplectic manifold $(M,\omega)$. Let $D$ be the covariant derivative of a linear connection on $A$.
\begin{itemize}

\item[(H1$_{\mathrm {Pre}}$)]
$A$ is \textbf{presymplectically anchored} with respect to $D$ if
\begin{equation*}
  \check{D}\omega = 0 \,.
\end{equation*}

\item[(H2$_{\mathrm {Pre}}$)]
A section $\mu \in \Gamma(A^*)$ is a \textbf{$D$-momentum section} if
\begin{equation*}
  \scal{D\mu}{a} = \iota_{\rho a} \omega
\end{equation*}
for all sections $a \in \Gamma(A)$.

\item[(H3$_{\mathrm {Pre}}$)]
A $D$-momentum section $\mu$ is \textbf{bracket-compatible} if
\begin{equation*}
  (\DA \mu)(a,b) = 
  - \omega(\rho a, \rho b)
\end{equation*}
for all sections $a,b \in \Gamma(A)$, where $\DA$ is the Lie algebroid differential.
\end{itemize}
A Lie algebroid together with a connection $D$ and a section $\mu$ of $A^*$ satisfying (H1$_{\mathrm {Pre}}$) and (H2$_{\mathrm {Pre}}$) is called \textbf{weakly hamiltonian}. It is called \textbf{hamiltonian} if it satisfies (H3$_{\mathrm {Pre}}$) as well.
\end{Definition}

\begin{Remark}
(H1$_{\mathrm{Pre}}$) of Definition~\ref{def:HamLAPresymplectic} is not the original form of the axiom of \cite[Def. 3.3]{BlohmannWeinstein:HamLA}, but it was shown to be equivalent to that one in \cite[Prop.~7.8]{BlohmannWeinstein:HamLA}.
\end{Remark}

\begin{Remark}
The similarity of the definitions for Poisson and presymplectic structures suggests that there should be a definition in the case of Dirac structures which subsumes them both. A notion of hamiltonian Lie algebroids over Dirac manifolds has been proposed recently \cite{Ikeda:2023hamilton}. More general still, there might be a generalization to Lie algebroids with IM-forms in the sense of \cite{BursztynCabreraOrtiz:2009}.  We have not yet looked into this possibility.
\end{Remark}

\subsection{Summary of the paper}
In Section~\ref{sec:PoissDef}, we motivate the Definition~\ref{def:HamLAPoisson} of a hamiltonian Lie algebroid, which involves a vector bundle connection on the Lie algebroid. In Proposition~\ref{prop:PoissPresympEquiv}, we show that, in the non-degenerate case, the definitions of hamiltonian Lie algebroids over Poisson and presymplectic manifolds are equivalent. Then we give a pointwise interpretation of the axioms, which states that they hold if and only if the usual conditions of a hamiltonian action hold pointwise at all points and for all sections that are horizontal at that point. As the immediate Corollary~\ref{cor:ActAlgHam}, we show that the action of a Lie algebra on a Poisson manifold is hamiltonian if and only if the action Lie algebroid is hamiltonian with respect to the trivial connection. In Proposition~\ref{prop:H3PoissonTorsion}, we give an equivalent characterization of the bracket-compatibility of momentum sections in terms of the Lie algebroid torsion, which is very useful in the rest of the paper.

In Section~\ref{sec:Examples}, we study some  basic examples: a bundle of Lie algebras over a Poisson manifold, the cotangent Lie algebroid of a Poisson manifold, and the tangent Lie algebroid of a Poisson manifold. We show that the only condition for a bundle of Lie algebras to be hamiltonian is that the momentum section must vanish on the first commutator ideal of the Lie algebras of all fibers. (This can be satisfied trivially by the zero momentum section.) For the cotangent Lie algebroid of a Poisson manifold, we show that a connection on $T^* M$ is Poisson anchored if and only if the torsion of the dual connection on $TM$ vanishes on the symplectic leaves (Corollary~\ref{cor:CotanPoissonAnchor}). If there is a 1-form $\eta$ such that $\mu = \Pish\eta$ is nowhere vanishing, then $T^* M$ is weakly hamiltonian (Theorem~\ref{thm:MomSec}). The momentum section $\mu$ is bracket-compatible if and only if it is a Liouville vector field (Proposition~\ref{prop:Liouville}). For the tangent Lie algebroid over a Poisson manifold, we show that a connection on $TM$ is Poisson anchored if and only if the dual connection on $T^*M$ has vanishing $T^*M$-torsion (Theorem~\ref{thm:TangentTorsion}). For the tangent Lie algebroid to have a momentum section, the Poisson structure must be non-degenerate. For this case, we summarize the various equivalent characterizations of (weakly) hamiltonian Lie algebroids over symplectic manifolds in Theorem~\ref{thm:WeaklyHamEquiv} and Theorem~\ref{thm:HamEquiv}. Finally, we show that the condition to be Poisson anchored is equivalent to a connection being Poisson in the usual sense \cite[Sec.~1(e), p.~65]{BayenEtAl:1978} for both the tangent Lie algebroid (Proposition~\ref{prop:PoissConn1}) and the cotangent Lie algebroid (Proposition~\ref{prop:PoissConn2}).

In Section~\ref{sec:Cabrera}, we explore an alternative notion of compatibility of a Lie algebroid $A\to M$ and a Poisson structure $\Pi$ on $M$ suggested to us by Alejandro Cabrera. In Theorem~\ref{thm:compatibilityonA*}, we show that the horizontal lift $\hat{\Pi}$ of $\Pi$ by some connection $D$ on $A^*$ commutes with the Lie algebroid Poisson structure $\Pi_A$ on $A^*$ if and only if $A$ is Poisson anchored by $D$ and satisfies an additional relation involving the $A$-torsion, the curvature, and the characteristic distribution of the Poisson structure on $M$. We then show in Theorem~\ref{thm:momentumsectionasPoissonmap} that a section $\mu$ of $A^*$ is a bracket-compatible momentum section if and only if 
the pullback $\mu^*$ on functions maps the brackets defined by the bivector field $\hat{\Pi} + \Pi_A$ to the Poisson brackets of $\Pi$. 

\subsection*{Acknowledgments}

For comments, encouragement, and advice we would like to thank Alejandro Cabrera, Lory Aintablian, Maarten Mol, Leonid Ryvkin, and Thomas Strobl, as well as the audiences over several years who heard us present preliminary versions of this work and gave invaluable feedback. Sections 2 and 3 of this work are based on the Master's thesis of S.R.~advised by C.B.~at the University of Bonn. S.R.~was funded by the Deutsche Forschungsgemeinschaft (DFG) -- 446784784.

\section{Hamiltonian Lie algebroids from symplectic to Poisson}
\label{sec:PoissDef}

\subsection{Motivation of the axioms}\label{sec:Motivation}

We recall the ``musical'' notation for 2-forms and bivector fields. The map given by inserting a vector field in a 2-form $\omega$ on $M$ will be denoted by
\begin{align*}
  \omega_\flat: TM &\longrightarrow T^* M \\
  v &\longmapsto \iota_v \omega
  \,.
\end{align*}
Analogously, given a bivector field $\Pi$ on $M$, we have the map
\begin{align*}
  \Pi^\sharp: T^*M &\longrightarrow T M \\
  \alpha &\longmapsto \iota_\alpha \Pi
  \,.
\end{align*}
We say that $\Pi$ is the inverse of $\omega$ if $\Pi^\sharp \circ \omega_\flat = \id_{TM}$, which is the case if and only if $\omega_\flat \circ \Pi^\sharp = \id_{T^* M}$.  (Note that $\omega$ has an inverse if and only if it is non-degenerate.) Evaluating both arguments of $\omega$ on the images of $\Pi^\sharp$ leads to a minus sign,
\begin{equation}
\label{eq:PBOmega=Pi}
\begin{split}
  \omega(\Pish\alpha,\Pish\beta) 
  &= \scal{\omfl\Pish\alpha}{\Pish\beta}
  = \scal{\alpha}{\Pish\beta}
  \\
  &= -\Pi(\alpha,\beta)
  \,.
\end{split}
\end{equation}
The first axiom (H1$_{\mathrm{Poi}}$) in  Definition~\ref{def:HamLAPoisson} of hamiltonian Lie algebroids over Poisson manifolds is motivated by the following fact.

\begin{Proposition}
\label{prop:DcheckOmegaPi}
Let $\omega$ be a 2-form on $M$ with an inverse bivector field $\Pi$. Let $(A,\rho)$ be an anchored vector bundle. Let $D$ be a connection on $A$ and $\chD$ be its opposite $A$-connection on $TM$. Then $\chD \Pi = 0$ if and only if $\chD\omega = 0$.
\end{Proposition}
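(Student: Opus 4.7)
The plan is to use the identity $\omega_\flat \circ \Pi^\sharp = \id_{T^*M}$ together with the Leibniz rule for $\chD$, viewed as a derivation on all tensor fields built from $TM$ and $T^*M$. Since $\chD$ is an $A$-connection on $TM$, it extends via duality to $T^*M$ and via the tensor-product Leibniz rule to arbitrary $(p,q)$-tensors on $M$; in particular, it acts on both bivector fields like $\Pi$ and $2$-forms like $\omega$, and the action commutes (up to the usual identifications) with viewing these tensors as bundle maps $\omega_\flat : TM \to T^*M$ and $\Pish : T^*M \to TM$.

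First, I would observe that applying $\chD_a$ to the identity $\omega_\flat \circ \Pish = \id_{T^*M}$ and using the Leibniz rule yields
\begin{equation*}
  (\chD_a \omega_\flat) \circ \Pish + \omega_\flat \circ (\chD_a \Pish) = 0
\end{equation*}
for every section $a \in \Gamma(A)$. Since $\omega$ is non-degenerate with inverse $\Pi$, both $\omega_\flat$ and $\Pish$ are bundle isomorphisms, so this identity shows that $\chD_a \omega_\flat$ vanishes if and only if $\chD_a \Pish$ vanishes.

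Next, I would argue that the vanishing of $\chD_a\omega_\flat$ as a bundle map $TM \to T^*M$ is equivalent to the vanishing of $\chD_a \omega$ as a $2$-form, and likewise for $\Pi$. This is essentially a tautology: the Leibniz rule for $\chD$ gives, for any vector fields $v,w$,
\begin{equation*}
  (\chD_a \omega)(v,w) = \langle (\chD_a \omega_\flat)(v), w \rangle,
\end{equation*}
and analogously $(\chD_a \Pi)(\alpha,\beta) = \langle \alpha, (\chD_a \Pish)(\beta) \rangle$ for $1$-forms $\alpha,\beta$. Combining these equivalences with the previous step yields $\chD\omega = 0 \iff \chD\Pi = 0$, as claimed.

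The main obstacle, and the step that needs to be handled with a little care, is verifying that the extension of $\chD$ to all tensor bundles is consistent with the Leibniz rule for composition of bundle maps; i.e., that the formulas relating $\chD\omega$ to $\chD\omega_\flat$ (and $\chD\Pi$ to $\chD\Pish$) hold. Once this bookkeeping is in place, the proof reduces to the single Leibniz identity applied to $\omega_\flat \circ \Pish = \id$.
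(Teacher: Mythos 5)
Your proposal is correct and rests on the same two ingredients as the paper's proof: the Leibniz rule for $\chD$ extended to tensors, and the invertibility of $\omega_\flat$ and $\Pish$. You merely repackage the paper's direct computation of $(\chD_a\omega)(\Pish\alpha,\Pish\beta)=(\chD_a\Pi)(\alpha,\beta)$ as the equivalent one-line differentiation of $\omega_\flat\circ\Pish=\id_{T^*M}$; the ``bookkeeping'' step you flag (compatibility of $\chD$ with contractions and compositions of bundle maps) is routine, and the sign discrepancy between $\Pi(\alpha,\beta)$ and $\langle\alpha,\Pish\beta\rangle$ under the paper's conventions is immaterial to the vanishing argument.
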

\begin{proof}
Using Equation~\eqref{eq:DcheckPairing}, we obtain the relation
\begin{equation*}
\begin{aligned}
  (\chD_a\omega)(\Pish\alpha,\Pish\beta)
  &= \rho a \cdot \bigl( \omega(\Pish\alpha,\Pish\beta) \bigr) 
     -\omega(\chD_a\Pish\alpha,\Pish\beta)
     -\omega(\Pish\alpha,\chD_a\Pish\beta)\\
  &= -\rho a \cdot \Pi(\alpha,\beta)
     +\scal{\chD_a\Pish\alpha}{\beta}
     -\scal{\alpha}{\chD_a\Pish\beta}\\
  &= -\rho a \cdot \Pi(\alpha,\beta)
     +\rho a \cdot \scal{\Pish\alpha}{\beta}
     -\scal{\Pish\alpha}{\chD_a\beta}\\
  &{}\quad -\rho a \cdot \scal{\alpha}{\Pish\beta}
+\scal{\chD_a\alpha}{\Pish\beta}\\
  &= \rho a \cdot \Pi(\alpha,\beta)
-\Pi(\alpha, \chD_a\beta)
-\Pi(\chD_a\alpha, \beta)\\
  &=(\chD_a\Pi)(\alpha,\beta), 
\end{aligned}
\end{equation*}
for all $a \in \Gamma(A)$ and $\alpha,\beta \in \Omega^1(M)$. If $\chD\omega = 0$, the left side vanishes, which implies that $\chD \Pi = 0$. Replacing $\alpha$ with $\omega_\flat v$ and $\beta$ with $\omega_\flat w$, we obtain the relation
\begin{equation*}
  (\chD_a\omega)(v,w)
  = (\chD_a\Pi)(\omega_\flat v, \omega_\flat w)
  \,,
\end{equation*}
for all vector fields $v,w \in \frakX(M)$. This shows that $\chD\Pi = 0$ if and only if $\chD\omega = 0$.
\end{proof}

From Proposition~\ref{prop:DcheckOmegaPi}, we conclude that, if $\omega$ is non-degenerate, then the axiom (H1$_\mathrm{Poi}$) of Definition~\ref{def:HamLAPoisson} for the Poisson case is equivalent to the axiom (H1$_\mathrm{Pre}$) of Definition~\ref{def:HamLAPresymplectic} for the presymplectic case.

For axiom (H2), the situation is even more obvious. If $\omega$ is non-degenerate, we can apply the inverse $\Pi^\sharp$ of $\omega_\flat$ to
\begin{equation*}
  \scal{D\mu}{a} 
  = \omfl(\rho a),
\end{equation*}
which yields the equation
\begin{equation}
\label{eq:PiMom1}
  \Pish\scal{D\mu}{a}
  = \rho a
  \,.
\end{equation}
We conclude that, if $\omega$ is non-degenerate, the axiom (H2$_{\mathrm{Poi}}$) of Definition~\ref{def:HamLAPoisson} for the Poisson case is equivalent to the axiom (H2$_{\mathrm{Pre}}$) of Definition~\ref{def:HamLAPresymplectic} for the presymplectic case.

The third axiom (H3$_{\mathrm{Pre}}$) of Definition~\ref{def:HamLAPresymplectic} for the presymplectic case is given by the equation
\begin{equation*}
  (\DA\mu)(a,b) = - \omega(\rho a, \rho b)
  \,.
\end{equation*}
If $\omega$ is non-degenerate and if $\mu$ satisfies~\eqref{eq:PiMom1}, we can replace the right side with
\begin{equation*}
\begin{split}
  -\omega(\rho a, \rho b) 
  &= -\omega\bigl(\Pish\scal{D\mu}{a}, \Pish\scal{D\mu}{b} \bigr)
  \\
  &= \Pi\bigl( \scal{D\mu}{a}, \scal{D\mu}{b} \bigl)
  \,,
\end{split}
\end{equation*}
where the change of sign comes from~\eqref{eq:PBOmega=Pi}. Combining the last two equations, we obtain the equation of axiom (H3$_{\mathrm{Poi}}$) of Definition~\ref{def:HamLAPoisson} in the Poisson case.

We conclude that, if $\omega$ is non-degenerate and if axiom (H2$_{\mathrm{Poi}}$) of Definition~\ref{def:HamLAPoisson} is satisfied, then the axiom (H3$_{\mathrm{Poi}}$) is equivalent to the axiom (H3$_{\mathrm{Pre}}$) of Definition~\ref{def:HamLAPresymplectic} for the presymplectic case. We can summarize this with the following proposition.

\begin{Proposition}
\label{prop:PoissPresympEquiv}
A Lie algebroid $A$ with connection D and section $\mu$ of $A^*$ over a non-degenerate Poisson manifold $(M,\Pi)$ is (weakly) hamiltonian if and only if it is (weakly) hamiltonian over the symplectic manifold $(M, \omega = \Pi^{-1})$.
\end{Proposition}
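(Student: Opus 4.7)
The plan is to package the three equivalences that were already worked out in the motivational discussion immediately preceding the statement. Since \emph{weakly hamiltonian} requires (H1) and (H2), while \emph{hamiltonian} adds (H3), it suffices to show that each of the three axioms in Definition~\ref{def:HamLAPoisson} is equivalent to the corresponding axiom in Definition~\ref{def:HamLAPresymplectic} under the assumption that $\omega = \Pi^{-1}$.

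For the equivalence (H1$_\mathrm{Poi}$) $\iff$ (H1$_\mathrm{Pre}$), I would simply cite Proposition~\ref{prop:DcheckOmegaPi}, which is precisely the statement $\chD\Pi = 0 \iff \chD\omega = 0$ for an anchored bundle with a connection, valid whenever $\omega$ is non-degenerate with inverse $\Pi$.

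For (H2$_\mathrm{Poi}$) $\iff$ (H2$_\mathrm{Pre}$), I would use that in the non-degenerate case $\Pish$ and $\omfl$ are mutually inverse bundle maps, so applying $\Pish$ to the presymplectic identity $\scal{D\mu}{a} = \omfl(\rho a)$ gives the Poisson identity $\Pish\scal{D\mu}{a} = \rho a$ and vice versa; this is the content of Equation~\eqref{eq:PiMom1}. Note that this step does not require (H1) or (H3).

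For (H3$_\mathrm{Poi}$) $\iff$ (H3$_\mathrm{Pre}$), I would assume that (H2) holds (in either form, which by the previous step is the same thing), and use Equation~\eqref{eq:PBOmega=Pi} to compute
\begin{equation*}
  -\omega(\rho a, \rho b)
  = -\omega\bigl( \Pish\scal{D\mu}{a}, \Pish\scal{D\mu}{b} \bigr)
  = \Pi\bigl( \scal{D\mu}{a}, \scal{D\mu}{b} \bigr),
\end{equation*}
which identifies the right-hand sides of (H3$_\mathrm{Pre}$) and (H3$_\mathrm{Poi}$); since the left-hand sides are identical ($\DA\mu$ depends only on the Lie algebroid and the section, not on $\omega$ or $\Pi$), the two axioms coincide.

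There is no real obstacle here; the statement is essentially a bookkeeping consolidation of the three pairwise equivalences already derived. The only minor point to be careful about is the logical order in the (H3) step: the equivalence of (H3$_\mathrm{Poi}$) and (H3$_\mathrm{Pre}$) uses (H2), so one must verify (H2) first, but this causes no circularity because (H1) and (H2) are proved equivalent without reference to (H3). Assembling the three equivalences then yields both the weak and the full hamiltonian statements simultaneously.
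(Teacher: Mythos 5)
Your proposal is correct and follows exactly the paper's own route: the proposition is stated as a summary of the preceding discussion, which establishes (H1) via Proposition~\ref{prop:DcheckOmegaPi}, (H2) by applying the mutually inverse maps $\Pish$ and $\omfl$ to obtain Equation~\eqref{eq:PiMom1}, and (H3) under the assumption of (H2) using Equation~\eqref{eq:PBOmega=Pi}. Your remark about the logical order in the (H3) step matches the paper's explicit caveat that the (H3) equivalence is asserted only when (H2$_{\mathrm{Poi}}$) holds.
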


\subsection{Pointwise interpretation}
\label{sec:Pointwise}

In Propositions~4.12 and 4.13 of \cite{BlohmannWeinstein:HamLA} it was shown for the presymplectic case that the axioms of a hamiltonian Lie algebroid hold if and only if the usual conditions of a hamiltonian action hold for all horizontal sections, locally or pointwise. We will show that this is still true in the Poisson case. First, we observe that the invariance of any tensor under vector fields given by the anchor of horizontal sections of a Lie algebroid can be expressed succinctly by the opposite connection (cf.~\cite[Sec.~4.2]{BlohmannWeinstein:HamLA} and~\cite{KotovStrobl:2016}).

\begin{Lemma}
Let $\rho: A \to TM$ be an anchored vector bundle with a connection $D$. Let $\phi \in \Gamma\bigl( (TM)^{\otimes p} \otimes (T^* M)^{\otimes q} \bigr)$. The following are equivalent:
\begin{itemize}
    
\item[(i)] $\chD \phi = 0$

\item[(ii)] $\Lie_{\rho a} \phi \bigr|_m = 0$ at all $m \in M$ and for all $a \in \Gamma(A)$ that are horizontal at $m$.

\end{itemize}
\end{Lemma}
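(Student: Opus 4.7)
The plan is to reduce the equivalence to the pointwise identity
\[
(\chD_a \phi)\bigr|_m = (\Lie_{\rho a} \phi)\bigr|_m,
\]
valid for every tensor $\phi$ and every $a\in\Gamma(A)$ that is horizontal at $m$, i.e.\ satisfying $(Da)|_m = 0$.

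First, I would establish this identity on the generators of the tensor algebra. For a vector field $v\in\frakX(M)$, it is immediate from definition~\eqref{eq:DcheckDef}: the term $\rho(D_v a)$ vanishes at $m$ because $(D_v a)|_m = 0$ for every $v\in T_m M$, leaving $(\chD_a v)|_m = [\rho a, v]|_m = (\Lie_{\rho a}v)|_m$. For a 1-form $\beta\in\Omega^1(M)$, I would subtract the Leibniz rule~\eqref{eq:DcheckPairing} from the standard Lie-derivative Leibniz rule applied to the pairing $\langle \beta, v\rangle$, obtaining
\[
\langle \chD_a \beta - \Lie_{\rho a} \beta,\, v\rangle = \langle \beta,\, \Lie_{\rho a} v - \chD_a v\rangle,
\]
whose right-hand side vanishes at $m$ by the vector-field case. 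Since both $\chD_a$ and $\Lie_{\rho a}$ are derivations on the tensor algebra that commute with contractions, the identity extends from vector fields and 1-forms to all tensors of type $(p,q)$ at points where $a$ is horizontal, by the usual Leibniz argument on decomposable tensors.

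The implication (i)$\Rightarrow$(ii) is then immediate: if $\chD\phi = 0$, then at any $m$ and for any $a$ horizontal at $m$ we have $(\Lie_{\rho a}\phi)|_m = (\chD_a\phi)|_m = 0$. For the converse, I would exploit the $C^\infty(M)$-linearity of $\chD_a$ in $a$, which follows for vector fields from the cancellation $[\rho(fa),v] + \rho(D_v(fa)) = f[\rho a,v] - (vf)\rho a + (vf)\rho a + f\rho(D_v a) = f\,\chD_a v$, and extends to tensor products by the derivation property. This linearity implies that $(\chD_a\phi)|_m$ depends on $a$ only through $a(m)\in A_m$. Given $m\in M$ and any $a\in\Gamma(A)$, I would produce a local section $\tilde a$ with $\tilde a(m) = a(m)$ and $(D\tilde a)|_m = 0$ by correcting an arbitrary local extension of $a(m)$ with a section vanishing at $m$ whose covariant derivative at $m$ cancels $(Da)|_m$ in a local frame. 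Then
\[
(\chD_a\phi)\bigr|_m = (\chD_{\tilde a}\phi)\bigr|_m = (\Lie_{\rho\tilde a}\phi)\bigr|_m = 0
\]
by hypothesis (ii), and since $m$ and $a$ were arbitrary, $\chD\phi = 0$.

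The argument is largely bookkeeping; the one spot requiring some care is verifying that $\chD_a$ and $\Lie_{\rho a}$ are extended to tensor products of arbitrary type $(p,q)$ by the same conventions (both act as derivations that commute with contractions and act as $\rho a$ on functions), so that the pointwise equality on generators propagates to all $\phi$. For $\Lie_{\rho a}$ this is standard, and $\chD$ as an $A$-connection on $TM$ is by definition extended in precisely the same way, so no genuine obstruction arises.
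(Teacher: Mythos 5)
Your proposal is correct and follows essentially the same route as the paper: both arguments rest on the pointwise coincidence of $\chD_a$ with $\Lie_{\rho a}$ at points where $a$ is horizontal, on the $C^\infty(M)$-linearity of $\chD_a$ in $a$ (the paper phrases this by expanding an arbitrary section in a local frame that is horizontal at $m$, you phrase it by constructing a single horizontal extension of a given value $a(m)$ — the same idea), and on the fact that both operators are derivations on the tensor algebra commuting with contractions, which propagates the identity from vector fields and $1$-forms to general $(p,q)$-tensors. No gaps.
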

\begin{proof}
Let $v \in \frakX(M)$ be a vector field. Let $\{a_i\}$ be a local frame of $\Gamma(A)$ such that all sections $a_i$ are horizontal at $m \in M$, i.e.~$(D a_i )_m = 0$. Every local section of $A$ can be written as $a = f^i a_i$  for smooth functions $f^i \in C^\infty(M)$. By the definition~\eqref{eq:DcheckDef} of the opposite connection, we have
\begin{equation*}
\begin{split}
  \bigl( \chD_a v \bigr)_m
  &= \bigl( f^i \chD_{a_i} \bigr)_m 
  = f^i(m) \bigl( \Lie_{\rho a_i} v + \rho(D_v a) \bigr)_m
  \\
  &= f^i(m) \bigl( \Lie_{\rho a_i} v \bigr)_m
  \,.
\end{split}
\end{equation*}
We conclude that $(\chD_a v)_m = 0$ for all sections $a \in \Gamma(A)$ if and only if $\Lie_{\rho b} v \bigr|_m = 0$ for all $b \in \Gamma(A)$ that are horizontal at $m$. For a 1-form $\alpha \in \Omega^1(M)$ we use~\eqref{eq:DcheckPairing} to obtain
\begin{equation*}
\begin{split}
  \langle \chD_a \alpha, v \rangle \bigr|_m
  &= f^i(m)\bigl( 
  \rho a_i \cdot \langle \alpha, v \rangle
  - \langle \alpha, \Lie_{\rho a_i} v \rangle \bigr)_m
  \\
  &= f^i(m) \langle \Lie_{\rho a_i} \alpha, v \rangle \bigr|_m
  \,,
\end{split}
\end{equation*}
for all $v \in \frakX(M)$. As before, we conclude that $(\chD_a \alpha)_m = 0$ for all sections $a \in \Gamma(A)$ if and only if $\Lie_{\rho b} \alpha \bigr|_m = 0$ for all $b \in \Gamma(A)$ that are horizontal at $m$. Since both $\chD_a$ and $\Lie_{\rho a}$ act as derivations on tensor products of vector fields and forms, the proposition follows.
\end{proof}

\begin{Corollary}
\label{cor:H1PoissonPW}
An anchored vector bundle $\rho: A \to TM$ over a Poisson manifold $(M,\Pi)$ is Poisson anchored with respect to a connection $D$ if and only if
\begin{equation}
\label{eq:H1PoissonPW}
  \Lie_{\rho a}\Pi \,\bigr|_m = 0
\end{equation}
at all points $m\in M$ and for all sections $a$ of $A$ that are horizontal at $m$.
\end{Corollary}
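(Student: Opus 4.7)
The proof is a direct application of the preceding Lemma. The plan is to invoke the Lemma with the tensor $\phi = \Pi$. The Poisson bivector $\Pi$ is a section of $\wedge^2 TM \subset (TM)^{\otimes 2}$, so it falls under the hypothesis with $p=2$ and $q=0$. Since $\chD$ acts as a derivation on tensor products and preserves antisymmetry (having been defined on $TM$ and extended to tensor powers via the Leibniz rule), applying it to $\Pi$ yields a well-defined bivector-valued expression; the same is true for the Lie derivative $\Lie_{\rho a}$.

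First I recall that, by axiom (H1$_{\mathrm{Poi}}$) of Definition~\ref{def:HamLAPoisson}, $A$ being Poisson anchored with respect to $D$ is by definition the condition $\chD \Pi = 0$. Then I apply the Lemma with $\phi = \Pi$: it asserts the equivalence of $\chD\Pi = 0$ and the pointwise condition $\Lie_{\rho a}\Pi|_m = 0$ at every $m \in M$ and for every section $a \in \Gamma(A)$ that is horizontal at $m$ (i.e.\ $(Da)_m = 0$). Chaining these two equivalences gives exactly~\eqref{eq:H1PoissonPW}.

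There is essentially no obstacle; the only point to verify is that the Lemma applies verbatim to bivector fields, which it does because the derivation property of both $\chD$ and $\Lie_{\rho a}$ on tensors of $TM$ restricts to antisymmetric tensors. Consequently the corollary reduces to a one-line invocation of the Lemma and a reference to the definition, and the proof can be written in just a few lines.
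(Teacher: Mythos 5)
Your proof is correct and matches the paper's approach: the corollary is stated without a separate proof precisely because it is the Lemma applied to $\phi = \Pi$ (a section of $(TM)^{\otimes 2}$) combined with the definition of Poisson anchored in (H1$_{\mathrm{Poi}}$). Your added remark that the derivation property restricts to antisymmetric tensors is a reasonable sanity check but not an extra idea beyond what the paper intends.
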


\begin{Proposition}
\label{prop:PointHam}
Let $A$ be a Lie algebroid over a Poisson manifold $(M,\Pi)$. Then $\mu \in \Gamma(M,A^*)$ is a $D$-momentum section if and only if
\begin{equation*}
  (\rho a)_m
  = \iota_{d\langle\mu, a\rangle}\Pi \,\bigr|_m
\end{equation*}
for all $m \in M$ and all $a \in \Gamma(M,A)$ that are horizontal at $m$. The momentum section is bracket-compatible if and only if
\begin{equation*}
 \langle\mu, [a,b] \rangle \bigr|_m
 = \rho a \cdot \langle \mu, b \rangle \,\bigr|_m 
\end{equation*}
for all $m \in M$ and all sections $a$, $b$ of $A$ that are horizontal at $m$.
\end{Proposition}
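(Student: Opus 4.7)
The plan is to exploit one observation: if $a\in\Gamma(A)$ is horizontal at $m$, i.e.~$(Da)_m=0$, then the Leibniz rule \eqref{eq:DPairing} gives
\begin{equation*}
  \langle D\mu, a\rangle_m \;=\; d\langle\mu, a\rangle_m
\end{equation*}
in $T_m^*M$, since the $\langle\mu, D_v a\rangle_m$ term vanishes. Combined with the fact that both sides of (H2$_{\mathrm{Poi}}$) and of (H3$_{\mathrm{Poi}}$) are $C^\infty(M)$-linear in their section arguments (since $D\mu$ and $d_A\mu$ are tensor fields), this reduces each equivalence to checking the relevant equation on a local frame horizontal at a single prescribed point $m$, which always exists. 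The template is the same as for the preceding lemma and Corollary~\ref{cor:H1PoissonPW}.

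For (H2$_{\mathrm{Poi}}$), the forward direction is immediate: restrict the global identity $\rho a = \iota_{\langle D\mu, a\rangle}\Pi$ to an $a$ horizontal at $m$ and apply the observation to replace $\langle D\mu, a\rangle_m$ by $d\langle\mu, a\rangle_m$. Conversely, writing an arbitrary section as $a = f^i a_i$ in a local frame $\{a_i\}$ horizontal at $m$, the pointwise condition on the frame at $m$ propagates to arbitrary $a$ at $m$ by the $C^\infty(M)$-linearity of both sides in $a$, and hence to all $m\in M$ since $m$ was arbitrary.

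For (H3$_{\mathrm{Poi}}$), the main subtlety is that the pointwise condition $\langle\mu, [a,b]\rangle\bigr|_m = \rho a\cdot\langle\mu, b\rangle\bigr|_m$ is \emph{not} tensorial in $(a,b)$, since the Lie bracket contributes derivation terms, whereas the global axiom is bilinear because $d_A\mu\in\Gamma(\wedge^2 A^*)$. The key computation is that, assuming (H2$_{\mathrm{Poi}}$), for $b$ horizontal at $m$ one has
\begin{equation*}
  \rho a\cdot\langle\mu, b\rangle\bigr|_m
  \;=\; \Pi\bigl(\langle D\mu, a\rangle, \langle D\mu, b\rangle\bigr)\bigr|_m,
\end{equation*}
obtained by rewriting $\rho a = \Pish\langle D\mu, a\rangle$ from (H2$_{\mathrm{Poi}}$) and replacing $d\langle\mu, b\rangle_m$ by $\langle D\mu, b\rangle_m$ via the observation. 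Applying the analogous identity to the second term and using antisymmetry of $\Pi$, the standard expansion $d_A\mu(a,b) = \rho a\cdot\langle\mu, b\rangle - \rho b\cdot\langle\mu, a\rangle - \langle\mu, [a,b]\rangle$ becomes, for $a,b$ horizontal at $m$,
\begin{equation*}
  d_A\mu(a,b)\bigr|_m
  \;=\; 2\,\Pi\bigl(\langle D\mu, a\rangle, \langle D\mu, b\rangle\bigr)\bigr|_m
  \;-\; \langle\mu, [a,b]\rangle\bigr|_m.
\end{equation*}
Setting this equal to $\Pi(\langle D\mu, a\rangle, \langle D\mu, b\rangle)\bigr|_m$, as (H3$_{\mathrm{Poi}}$) requires, is thus equivalent to the stated pointwise condition. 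The forward direction falls out by restriction; the converse, despite the non-tensoriality of the pointwise statement, is rescued by the $C^\infty(M)$-bilinearity of (H3$_{\mathrm{Poi}}$) itself, which promotes the equalities on a horizontal frame at $m$ to the global axiom. The only real obstacle is this non-tensoriality, and it dissolves precisely because $d_A\mu$ absorbs the bracket's derivation terms into a tensorial expression.
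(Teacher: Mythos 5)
Your proposal is correct and follows essentially the same route as the paper: reduce to a local frame horizontal at $m$, use the Leibniz rule to identify $\langle D\mu,a\rangle_m$ with $d\langle\mu,a\rangle_m$ for sections horizontal at $m$, and invoke the $C^\infty(M)$-(bi)linearity of (H2$_{\mathrm{Poi}}$) and (H3$_{\mathrm{Poi}}$) to pass between the frame and arbitrary sections, with (H2$_{\mathrm{Poi}}$) converting $\Pi(\langle D\mu,a\rangle,\langle D\mu,b\rangle)$ into $\rho$-directional derivatives. The only cosmetic difference is that you apply (H2$_{\mathrm{Poi}}$) symmetrically to both terms of $d_A\mu(a,b)$ (producing the factor of $2$, as in the paper's Proposition~\ref{prop:H3PoissonTorsion}), whereas the paper's proof of Proposition~\ref{prop:PointHam} uses it once to rewrite $\Pi(\langle D\mu,a\rangle,\langle D\mu,b\rangle)$ as $-\iota_{\rho b}\langle D\mu,a\rangle$; both are valid.
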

\begin{proof}
Let $m \in M$, and let $\{a_i\}$ be a local frame of $\Gamma(A)$ such that all sections $a_i$ are horizontal at $m \in M$. Every local section of $A$ can be written as $a = f^i a_i$ for smooth functions $f^i \in C^\infty(M)$. The condition  (H2$_{\mathrm{Poi}}$) of Definition~\ref{def:HamLAPoisson} evaluated at $m$ is
\begin{equation*}
\begin{split}
  f^i(m) (\rho a_i)_m 
  &= \Pi^\sharp\bigl(  d\langle\mu, f^i a_i \rangle 
     - \langle\mu, D(f^i a_i) \rangle \bigr)_m
  \\
  &= \Pi^\sharp\bigl(  df^i \langle\mu, a_i \rangle 
     + f^i d\langle\mu, a_i \rangle
     - df^i \langle\mu, D a_i \rangle
     - f^i \langle\mu, D a_i \rangle \bigr)_m
  \\
  &= f^i(m) \Pi^\sharp\bigl( d\langle\mu, a_i \rangle \bigr)_m
  \,.
\end{split}
\end{equation*}
Since the $f^i(m)$ can be chosen arbitrarily, this equation is satisfied for all $f^i$ if and only if
\begin{equation*}
  (\rho a_i)_m 
  = \Pi^\sharp\bigl( d\langle\mu, a_i \rangle \bigr)_m
  \,,
\end{equation*}
which proves the first statement of the proposition. Assuming (H2$_\mathrm{Poi}$), the bracket compatibility of axiom (H3$_\mathrm{Poi}$) is equivalent to the vanishing of 
\begin{equation*}
\begin{split}
  (\DA\mu)(a,b) - \Pi(\scal{D\mu}{a}, \scal{D\mu}{b})
  &=
  (\DA\mu)(a,b) + \iota_{\rho b} \scal{D\mu}{a}
  \\
  &=
  (\DA\mu)(a,b) + \iota_{\rho b} \bigl(
  d \scal{\mu}{a} - \scal{\mu}{Da} \bigr)
  \\
  &=
  \rho a \cdot \scal{\mu}{b} - \langle\mu, [a,b] \rangle 
  - \scal{\mu}{D_{\rho b}a}
 \,,
\end{split}
\end{equation*}
where we have used~\eqref{eq:DPairing} and the definition of the Lie algebroid differential. The right side vanishes at $m$ if and only if
\begin{equation*}
  f^i(m) \bigl(
  \rho a_i \cdot \scal{\mu}{b} - \langle\mu, [a,b] \rangle 
  \bigr)_m
  = 0
  \,.
\end{equation*}
Since the $f^i(m)$ can be chosen arbitrarily, this proves the second statement.
\end{proof}

\begin{Corollary}
\label{cor:ActAlgHam}
The action of a Lie algebra on a Poisson manifold is (weakly) hamiltonian if and only if the action Lie algebroid is (weakly) hamiltonian with respect to the trivial connection.
\end{Corollary}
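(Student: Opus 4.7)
The plan is to specialize the pointwise characterizations just established, namely Corollary~\ref{cor:H1PoissonPW} for axiom (H1$_{\mathrm{Poi}}$) and Proposition~\ref{prop:PointHam} for axioms (H2$_{\mathrm{Poi}}$) and (H3$_{\mathrm{Poi}}$), to the case where $A = \frakg \times M$ is an action Lie algebroid with the trivial product connection, and to match the resulting conditions term by term with the classical axioms (H1)--(H3) recalled at the start of the introduction.

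First I would fix the identifications: sections of $A = \frakg \times M$ correspond to smooth $\frakg$-valued functions on $M$, sections of $A^* = \frakg^* \times M$ correspond to smooth maps $\mu : M \to \frakg^*$, and the anchor acts on a constant section $a \in \frakg$ as the vector field $\rho a$ of the given Lie algebra action. The essential observation is that for the trivial connection $D = d$, the sections horizontal at a point $m \in M$ are exactly the constant $\frakg$-valued sections; in particular every $a \in \frakg$ extends, as the constant section $a$ itself, to a section horizontal at every point of $M$, and conversely every section horizontal at $m$ agrees at $m$ with such a constant one.

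With this in hand, each axiom reduces directly. By Corollary~\ref{cor:H1PoissonPW}, (H1$_{\mathrm{Poi}}$) is equivalent to $\Lie_{\rho a}\Pi|_m = 0$ at every $m$ for all $a$ horizontal at $m$, i.e.~for all $a \in \frakg$, which is precisely (H1). By the first half of Proposition~\ref{prop:PointHam}, (H2$_{\mathrm{Poi}}$) is equivalent to $(\rho a)_m = \iota_{d\langle\mu,a\rangle}\Pi|_m$ at every $m$ for all constant sections $a \in \frakg$; since $\langle\mu,a\rangle$ is then the ordinary smooth function $m \mapsto \langle\mu(m),a\rangle$, this is (H2). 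Finally, by the second half of Proposition~\ref{prop:PointHam}, (H3$_{\mathrm{Poi}}$) is equivalent to $\langle\mu,[a,b]\rangle|_m = \rho a \cdot \langle\mu,b\rangle|_m$ at every $m$ for all constant $a,b \in \frakg$, which is (H3).

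There is no real obstacle: the corollary is almost a tautology once the pointwise characterizations are available, and the only thing to check is the identification of horizontal sections with constant $\frakg$-valued sections for the trivial connection. The \emph{weakly hamiltonian} case follows by running exactly the same argument but omitting the final reduction involving (H3$_{\mathrm{Poi}}$).
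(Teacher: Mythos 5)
Your proposal is correct and follows essentially the same route as the paper: identify the horizontal sections of the trivial connection with the constant $\frakg$-valued sections, note that for such sections $\chD_a\Pi = \Lie_{\rho a}\Pi$ and $D\langle\mu,a\rangle = d\langle\mu,a\rangle$, and then invoke Corollary~\ref{cor:H1PoissonPW} and Proposition~\ref{prop:PointHam}. The one point you flag as needing care --- that the pointwise conditions for arbitrary sections horizontal at $m$ reduce to the conditions for constant sections --- is exactly what those pointwise results were designed to handle, so no gap remains.
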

\begin{proof}
Let $\rho: \frakg \to \frakX(M)$ be the action of a Lie algebra on a Poisson manifold $(M, \Pi)$. The Lie algebra can be identified with the constant sections of the action Lie algebroid $A = \frakg \times M \to M$, which are the sections that are horizontal with respect to the trivial connection $D$. If $a$ is a constant section, then $\chD_a \Pi =  \Lie_{\rho a} \Pi$ and $D\langle \mu, a \rangle = d\langle\mu, a \rangle$. The proof follows from Corollary~\ref{cor:H1PoissonPW} and Proposition~\ref{prop:PointHam}.
\end{proof}

\begin{Remark}
Note that the trivial connection is an essential part of the data.  In fact, even if an action is not weakly hamiltonian, its action Lie algebroid could still be weakly hamiltonian with respect to a different connection. An example of this situation on a symplectic manifold was given in \cite[Example~4.4]{BlohmannWeinstein:HamLA}.
\end{Remark}

\begin{Example}
\label{ex:LiePoisson}
The cotangent Lie algebroid $T^* \frakg^* \cong \frakg \times \frakg^*$ of a Lie-Poisson manifold $\frakg^*$ is isomorphic to the action Lie algebroid of the coadjoint action. The coadjoint action is hamiltonian with momentum map the projection  $\frakg \times \frakg^* \to \frakg^*$. It follows from Corollary~\ref{cor:ActAlgHam} that $T^* \frakg^*$ is hamiltonian with respect to the trivial connection.
\end{Example}

\subsection{Bracket-compatibility in terms of Lie algebroid torsion}

In the presymplectic case, the condition of bracket-compatibility has a useful equivalent description in terms of the $A$-torsion of $D$ \cite[Prop.~5.1]{BlohmannWeinstein:HamLA}, which is defined by 
\begin{equation}
\label{eq:Atorsion}
  \TorA(a,b) := D_{\rho a} b - D_{\rho b} a - [a,b]
  \,.
\end{equation}
An analogous result holds in the Poisson case:

\begin{Proposition}
\label{prop:H3PoissonTorsion}
A $D$-momentum section $\mu$ of a Lie algebroid $A$ over a Poisson manifold $(M,\Pi)$ is bracket-compatible if and only if 
\begin{equation}
\label{eq:H3PoissonTorsion}
  \bigl\langle\mu, \TorA(a,b) \bigr\rangle 
  = - \Pi\bigl( \scal{D\mu}{a}, \scal{D\mu}{b} \bigr)
\end{equation}
for all sections $a$ and $b$ of $A$.
\end{Proposition}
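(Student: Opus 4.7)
The plan is a direct algebraic manipulation in the spirit of \cite[Prop.~5.1]{BlohmannWeinstein:HamLA}, turning the bracket-compatibility equation into the torsion equation by isolating $\langle\mu,T_A(a,b)\rangle$ inside $(d_A\mu)(a,b)$.

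First, I would expand the Lie algebroid differential in its standard form
\begin{equation*}
  (d_A\mu)(a,b)
  = \rho a\cdot\langle\mu,b\rangle
  - \rho b\cdot\langle\mu,a\rangle
  - \langle\mu,[a,b]\rangle.
\end{equation*}
Applying the Leibniz rule \eqref{eq:DPairing} to each of the first two terms, for instance $\rho a\cdot\langle\mu,b\rangle = \langle D_{\rho a}\mu,b\rangle + \langle\mu,D_{\rho a}b\rangle$, and then recognising the combination $\langle\mu,D_{\rho a}b - D_{\rho b}a - [a,b]\rangle = \langle\mu, T_A(a,b)\rangle$, yields the identity
\begin{equation*}
  (d_A\mu)(a,b)
  = \langle D_{\rho a}\mu,b\rangle
  - \langle D_{\rho b}\mu,a\rangle
  + \langle\mu,T_A(a,b)\rangle.
\end{equation*}
This step uses no hypothesis beyond the definitions, and so holds for any $\mu$ and any connection.

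Next I would bring in (H2$_{\mathrm{Poi}}$). Viewing $\langle D\mu,a\rangle$ as a $1$-form on $M$ via $\bigl(\langle D\mu,a\rangle\bigr)(v)=\langle D_v\mu,a\rangle$, the hypothesis $\rho a = \Pi^\sharp \langle D\mu, a\rangle$ gives
\begin{equation*}
  \langle D_{\rho a}\mu, b\rangle
  = \bigl\langle \langle D\mu,b\rangle, \Pi^\sharp\langle D\mu,a\rangle\bigr\rangle
  = \Pi\bigl(\langle D\mu,a\rangle,\langle D\mu,b\rangle\bigr),
\end{equation*}
and symmetrically $\langle D_{\rho b}\mu,a\rangle = -\Pi\bigl(\langle D\mu,a\rangle,\langle D\mu,b\rangle\bigr)$ by antisymmetry of $\Pi$. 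Substituting these in gives
\begin{equation*}
  (d_A\mu)(a,b)
  = 2\Pi\bigl(\langle D\mu,a\rangle,\langle D\mu,b\rangle\bigr)
  + \langle\mu,T_A(a,b)\rangle.
\end{equation*}

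Finally, (H3$_{\mathrm{Poi}}$) says $(d_A\mu)(a,b) = \Pi\bigl(\langle D\mu,a\rangle,\langle D\mu,b\rangle\bigr)$; subtracting this from the line above makes (H3$_{\mathrm{Poi}}$) equivalent to \eqref{eq:H3PoissonTorsion}. Since every step is an equivalence, both directions of the proposition follow. The only real place to be careful is the sign bookkeeping between $\iota_\alpha\Pi$, $\Pi^\sharp\alpha$, and $\Pi(\alpha,\beta)$; once the convention of the paper is used consistently, no subtlety remains.
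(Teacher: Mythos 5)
Your proposal is correct and follows essentially the same route as the paper's proof: both expand $(\DA\mu)(a,b)$ via the Leibniz rule to isolate $\scal{\mu}{\TorA(a,b)}$, then use (H2$_{\mathrm{Poi}}$) to identify $\scal{D_{\rho a}\mu}{b} - \scal{D_{\rho b}\mu}{a}$ with $2\Pi(\scal{D\mu}{a},\scal{D\mu}{b})$, and conclude by subtraction. The sign bookkeeping you flag works out consistently with the paper's convention $\iota_\alpha\Pi(\beta)=\Pi(\alpha,\beta)$, so there is nothing to add.
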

\begin{proof}
Using the assumption that $\mu$ is a momentum section,  we obtain
\begin{equation}
\label{eq:H3Torsion1}
\begin{split}
  \Pi\bigl(\scal{D\mu}{a},\scal{D\mu}{b}\bigr) 
  &= \tfrac{1}{2} (
    \iota_{\scal{D\mu}{b}} \iota_{\scal{D\mu}{a}} \Pi
  - \iota_{\scal{D\mu}{a}} \iota_{\scal{D\mu}{b}} \Pi )
  \\
  &= \tfrac{1}{2} (
    \iota_{\scal{D\mu}{b}} \rho a 
  - \iota_{\scal{D\mu}{a}} \rho b )
  \\
  &= \tfrac{1}{2} (
     \scal{D_{\rho a} \mu}{b} 
   - \scal{D_{\rho b} \mu}{a} )
   \,.
\end{split}
\end{equation}
The Lie algebroid differential of $\mu$ can be expressed in terms of the Lie algebroid torsion of $D$ as
\begin{equation*}
\begin{split}
  (\DA\mu)(a,b) 
  &= \rho a \cdot \scal{\mu}{b}
   - \rho b \cdot \scal{\mu}{a}
   - \scal{\mu}{[a,b]_A}
  \\
  &= \scal{D_{\rho a}\mu}{b} + \scal{\mu}{D_{\rho a}b}
   - \scal{D_{\rho b}\mu}{a} - \scal{\mu}{D_{\rho b}a}
   - \scal{\mu}{[a,b]_A}
  \\  
  &= \scal{D_{\rho a}\mu}{b} - \scal{D_{\rho b}\mu}{a} 
     + \scal{\mu}{\TorA(a,b)}
  \\  
  &= 2\Pi\bigl( \scal{D\mu}{a},\scal{D\mu}{b} \bigr)
     + \scal{\mu}{\TorA(a,b)}
  \,,
\end{split}
\end{equation*}
where in the last step we have used~\eqref{eq:H3Torsion1}. By subtracting $\Pi(\scal{D\mu}{a},\scal{D\mu}{b})$ on both sides, we obtain
\begin{equation*}
  (\DA\mu)(a,b) 
  - \Pi\bigl( \scal{D\mu}{a}, \scal{D\mu}{b} \bigr) 
  = \Pi\bigl( \scal{D\mu}{a}, \scal{D\mu}{b} \bigr) 
    + \scal{\mu}{\TorA(a,b)}
  \,.
\end{equation*}
The left side vanishes if and only if the momentum section $\mu$ is bracket-compatible, and the right side does if and only if \eqref{eq:H3PoissonTorsion} holds.
\end{proof}

\subsection{The zero locus of the momentum section}

Let $A \to M$ be a hamiltonian Lie algebroid over the Poisson manifold $(M,\Pi)$ with momentum section $\mu: M \to A^*$. Let
\begin{equation*}
  I := \{ \langle\mu, a\rangle \in C^\infty(M) ~|~ a \in \Gamma(M,A)\}   
\end{equation*}
be the space of functions we obtain by pairing $\mu$ with all sections of the Lie algebroid. Since $f \langle\mu, a\rangle = \langle\mu, fa\rangle$ for all $f \in C^\infty(M)$, $I$ is an ideal. The zero locus $Z = \mu^{-1}(0)$ is the set of common zeros of the elements of $I$. The following statement is analogous to the presymplectic case \cite[Prop.~5.2]{BlohmannWeinstein:HamLA}.

\begin{Proposition}
\label{Zeroisotropic}
In a hamiltonian Lie algebroid over a Poisson manifold, the zero locus $Z$ of the momentum section is invariant in the sense that every orbit which meets $Z$ is contained in $Z$.  
\end{Proposition}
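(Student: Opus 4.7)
The plan is to show that each anchor-image vector field $\rho a$, for $a \in \Gamma(A)$, preserves the ideal $I$ of functions cutting out $Z$. From this, an elementary ODE argument gives that the flow of $\rho a$ preserves $Z$, and since every orbit is swept out by such flows, any orbit meeting $Z$ is contained in $Z$.

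The heart of the matter is the identity
$$
\rho a \cdot \langle \mu, b \rangle \;=\; \langle \mu,\, D_{\rho b} a + [a,b]\rangle,
$$
whose right-hand side visibly lies in $I$. To derive it, I would expand the left-hand side by the Leibniz rule~\eqref{eq:DPairing} as $\langle D_{\rho a}\mu, b\rangle + \langle \mu, D_{\rho a} b\rangle$. Substituting $\rho a = \iota_{\langle D\mu, a\rangle}\Pi$ from (H2$_{\mathrm{Poi}}$) into the first summand yields
$$
\langle D_{\rho a}\mu, b\rangle \;=\; \iota_{\rho a}\langle D\mu, b\rangle \;=\; \Pi\bigl(\langle D\mu, a\rangle, \langle D\mu, b\rangle\bigr),
$$
which by Proposition~\ref{prop:H3PoissonTorsion} equals $-\langle \mu, \TorA(a,b)\rangle$. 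Expanding $\TorA(a,b)$ via~\eqref{eq:Atorsion} and cancelling $\langle\mu, D_{\rho a} b\rangle$ then produces the identity.

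Since every element of $I$ is of the form $\langle\mu, c\rangle$ with $c \in \Gamma(A)$, it follows that $\rho a \cdot I \subset I$. To conclude, fix $m \in Z$, let $\phi_t$ be the local flow of $\rho a$, choose a local frame $a_1, \ldots, a_r$ of $A$ near $m$, and set $f_j = \langle\mu, a_j\rangle$. The identity expresses each $\rho a \cdot f_j$ as a $C^\infty$-linear combination $\sum_k g_{jk} f_k$, so $u_j(t) := f_j(\phi_t(m))$ satisfies the homogeneous linear ODE $u_j'(t) = \sum_k g_{jk}(\phi_t(m)) u_k(t)$ with initial condition $u_j(0) = 0$, and therefore vanishes identically. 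Thus $\phi_t(m) \in Z$, and orbits meeting $Z$ stay in $Z$. The only delicate point is tracking the sign conventions in the Poisson-contraction step; beyond that, the argument is essentially mechanical.
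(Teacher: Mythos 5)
Your proof is correct and follows essentially the same route as the paper: both hinge on the identity $\rho a \cdot \langle \mu, b\rangle = \langle \mu, [a,b] + D_{\rho b}a\rangle \in I$ (the paper extracts it from the computation in the proof of Proposition~\ref{prop:PointHam}, while you rederive it via the Leibniz rule, (H2$_{\mathrm{Poi}}$), and the torsion form of (H3$_{\mathrm{Poi}}$) from Proposition~\ref{prop:H3PoissonTorsion} --- equivalent manipulations) and then conclude that $I$, hence $Z$, is preserved by the flows of the anchor images. Your explicit linear-ODE argument in a local frame just spells out the flow-invariance step that the paper leaves implicit, and your sign bookkeeping in the contraction $\langle D_{\rho a}\mu, b\rangle = \Pi\bigl(\langle D\mu,a\rangle,\langle D\mu,b\rangle\bigr)$ matches the paper's conventions.
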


\begin{proof}
In the proof of Proposition~\ref{prop:PointHam}, we have shown for a hamiltonian Lie algebroid that the Lie derivative of a function $\langle\mu, b \rangle$ with respect to $\rho a$ can be expressed as
\begin{equation}
\label{eq:ZeroLocus1}
  \Lie_{\rho a} \langle\mu,b \rangle
  = \langle\mu, [a,b] + D_{\rho b } a \rangle \in I \,.
\end{equation}
This shows that $I$ is invariant under the Lie derivative of every vector field in the image of the anchor. It follows that $I$ and hence its set $Z$ of common zeros are invariant under the flow of every vector field in the image of the anchor. We conclude that every orbit of $A$ that meets $Z$ is contained in $Z$.
\end{proof}

\begin{Remark}
In the analogous proposition for the presymplectic case, it was shown that all orbits in the zero locus are isotropic \cite[Prop.~5.2]{BlohmannWeinstein:HamLA}. In the Poisson case, it is straightforward to prove the isotropy of orbits in the \emph{clean} locus of $Z$, as we will see in Theorem~\ref{thm:CoisoPoiss}. However, we do not know any example of a hamiltonian Lie algebroid over a Poisson manifold with a non-isotropic orbit in the zero locus of the momentum section. 
\end{Remark}

The zero locus of a momentum section is not necessarily a smooth submanifold. For the coisotropic property of $Z$ and the isotropic property of the orbits in $Z$, we have to restrict our attention to the \textbf{clean zero locus} $Z_{\mathrm{cl}}$, which consists of the points of smoothness where the tangent space of the zero locus is the entire zero space of the differential of the momentum section. The clean zero locus can be identified in algebraic terms as the set of points $m \in M$ for which there is a neighborhood $U$ on which $Z$ is a smooth submanifold and on which the defining ideal $I$ is no smaller than the ideal $I_Z\supseteq I$ consisting of \emph{all} functions vanishing on $Z$. 


\begin{Theorem}
\label{thm:CoisoPoiss}
The clean zero locus $Z_{\mathrm{cl}}$ of the momentum section for a hamiltonian Lie algebroid over a Poisson manifold is a coisotropic submanifold which is invariant under the Lie algebroid. The characteristic distribution of $Z_{\mathrm{cl}}$ is equal to the image of the anchor. The orbits in $Z_\mathrm{cl}$ are isotropic.
\end{Theorem}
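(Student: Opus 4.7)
The plan is to handle the four assertions one by one. Smoothness of $Z_{\mathrm{cl}}$ is built into its definition, and invariance under the Lie algebroid is exactly Proposition~\ref{Zeroisotropic}, so the substantive work is to establish coisotropy, identify the characteristic distribution, and then deduce isotropy of the orbits from these.

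For coisotropy I would compute the Poisson bracket of two generators of the ideal $I$. By the Leibniz rule~\eqref{eq:DPairing}, the differential decomposes as $d\langle \mu, a\rangle = \langle D\mu, a\rangle + \langle \mu, Da\rangle$, and on the zero locus $Z$ the second summand vanishes since $\mu$ does, so that
\[
  \bigl\{\langle \mu, a\rangle,\, \langle \mu, b\rangle\bigr\}\bigr|_Z
  = \Pi\bigl(\langle D\mu, a\rangle,\, \langle D\mu, b\rangle\bigr)\bigr|_Z .
\]
By bracket-compatibility (H3$_{\mathrm{Poi}}$) the right-hand side equals $(\DA \mu)(a,b)|_Z$. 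Expanding the Lie algebroid differential and invoking both $\langle \mu, [a,b]\rangle|_Z = 0$ and the identity~\eqref{eq:ZeroLocus1} established in the proof of Proposition~\ref{Zeroisotropic}, each summand lies in $I$ and hence vanishes on $Z$, giving $\{I,I\}|_Z = 0$. The cleanness hypothesis provides $I_{Z_{\mathrm{cl}}} = I$ locally, which upgrades this to closedness of $I_{Z_{\mathrm{cl}}}$ under Poisson brackets modulo itself, i.e.\ coisotropy of $Z_{\mathrm{cl}}$.

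Next I would identify the characteristic distribution. Cleanness implies that the conormal bundle $N^* Z_{\mathrm{cl}}$ is pointwise spanned by the differentials $d\langle \mu, a\rangle|_Z$ as $a$ ranges over $\Gamma(A)$. On $Z$ this simplifies to $\langle D\mu, a\rangle|_Z$, and (H2$_{\mathrm{Poi}}$) then gives $\Pi^\sharp \langle D\mu, a\rangle = \rho a$, hence $\Pi^\sharp(N^* Z_{\mathrm{cl}}) = \rho(A)|_{Z_{\mathrm{cl}}}$. For an orbit $\mathcal{O} \subseteq Z_{\mathrm{cl}}$ one then has $T\mathcal{O} = \rho(A)|_{\mathcal{O}} = \Pi^\sharp(N^* Z_{\mathrm{cl}}|_{\mathcal{O}})$, and since $T\mathcal{O} \subseteq T Z_{\mathrm{cl}}$ forces $N^* Z_{\mathrm{cl}} \subseteq T\mathcal{O}^{\circ}$, we conclude $T\mathcal{O} \subseteq \Pi^\sharp(T\mathcal{O}^{\circ})$, which is isotropy.

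The main obstacle I anticipate is careful bookkeeping around the cleanness hypothesis: one has to verify that the generating set $I$, rather than the a priori larger vanishing ideal $I_Z$, already suffices both to perform the bracket computation on $Z$ and to span the conormal pointwise. This is exactly what cleanness supplies, but each of its two uses above needs to be invoked explicitly, parallelling the presymplectic analogue of~\cite[Prop.~5.2]{BlohmannWeinstein:HamLA}.
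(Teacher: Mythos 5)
Your identification of the conormal bundle of $Z_{\mathrm{cl}}$ via cleanness, the use of (H2$_{\mathrm{Poi}}$) to obtain $\Pish\bigl((T_m Z_{\mathrm{cl}})^\circ\bigr)=\rho(A_m)$, and the annihilator-reversal argument for isotropy of the orbits coincide with the paper's proof. Where you genuinely diverge is the coisotropy step: the paper deduces coisotropy directly from the inclusion $\Pish\bigl((T_m Z_{\mathrm{cl}})^\circ\bigr)=\rho(A_m)\subset T_m Z_{\mathrm{cl}}$, the inclusion being supplied by invariance of $Z_{\mathrm{cl}}$, whereas you verify the first-class condition $\{I,I\}\subseteq I_Z$ by an explicit bracket computation using (H3$_{\mathrm{Poi}}$) and Equation~\eqref{eq:ZeroLocus1}, and then invoke $I=I_Z$ locally. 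Both routes are valid; yours is the classical first-class-constraints argument and does not need the tangency of $\rho(A)$ to $Z_{\mathrm{cl}}$ as an input, while the paper's is shorter once invariance is in hand.

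The one genuine gap is the invariance claim itself. Proposition~\ref{Zeroisotropic} only says that $Z$ is invariant; the theorem asserts that the \emph{clean} locus $Z_{\mathrm{cl}}$ is invariant, and this does not follow formally: an orbit through a clean point could a priori stay in $Z$ but run into non-clean points. The paper closes this by observing that the flows generated by the image of $\rho$ preserve $I$, hence $Z$, hence $I_Z$, hence both the locus where $I$ and $I_Z$ agree locally and the smooth locus of $Z$ --- and therefore $Z_{\mathrm{cl}}$. You need some version of this argument both to justify the invariance assertion of the theorem and to legitimize the step ``$T\mathcal{O}\subseteq TZ_{\mathrm{cl}}$'' for orbits that merely \emph{meet} $Z_{\mathrm{cl}}$; the infinitesimal statement $\rho(A_m)\subset T_m Z_{\mathrm{cl}}$ does follow from your coisotropy computation together with the conormal identification, but the set-level invariance of $Z_{\mathrm{cl}}$ requires the extra observation above.
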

\begin{proof}
Since $Z$ is determined by $I$, so is $I_Z$, and hence the latter is invariant under all the diffeomorphisms generated by the image of $\rho$.  It follows that the subset where they agree locally is invariant.  So is the set of smooth points of $Z$, and hence so is $Z_{\mathrm{cl}}$.

Now let $m$ belong to the clean zero locus. A vector $v \in T_m M$ is tangent to $\mu^{-1}(0)$ if and only if for all $a \in \Gamma(M,A)$,
\begin{equation*}
\begin{split}
  0 
  &= v \cdot \langle\mu, a \rangle
  = \iota_v \bigl( d\langle \mu, a \rangle \bigr)_m
  = \iota_v (\langle D\mu, a \rangle_m + \langle \mu, Da \rangle_m)
  \\
  &= \iota_v \langle D\mu, a \rangle
  \,,
\end{split}
\end{equation*}
where in the last step we have used that $\mu$ vanishes at $m$. This shows that the annihilator $(T_m Z_\mathrm{cl})^\circ$ or, in other words, the fiber at $m$ of the conormal bundle of $Z_\mathrm{cl}$, is spanned by the 1-forms $\langle D\mu, a \rangle_m$ for all $a$.

Since $\mu$ is a momentum section, we have $\Pish(\langle D\mu, a \rangle) = \rho a$. It follows that
\begin{equation}
\label{eq:ZeroLocus2}
  \Pish\bigl( (T_m Z_\mathrm{cl})^\circ \bigr)
  = \rho(A_m) \subset TZ_\mathrm{cl}
  \,,
\end{equation}
where the inclusion on the right follows from the invariance of $Z_\mathrm{cl}$. Equation~\eqref{eq:ZeroLocus2} shows that $Z_\mathrm{cl}$ is coisotropic and that the characteristic distribution at $m$ is $\rho(A_m)$.

The tangent space of an orbit $S$ through $m \in Z_\mathrm{cl}$ is given by $TS = \rho(A_m) \subset T_m Z_\mathrm{cl}$. It follows that $(T_m S)^\circ \supset (T_m Z_\mathrm{cl})^\circ$ and, therefore, $\Pish\bigl( (T_m S)^\circ\bigr) \supset \Pish\bigl( (T_m Z_\mathrm{cl})^\circ \bigr)$. With Equation~\eqref{eq:ZeroLocus2}, we conclude that
\begin{equation*}
  \Pish\bigl( (T_m S)^\circ \bigr)
  \supset T_m S
  \,,
\end{equation*}
which means that $S$ is isotropic.
\end{proof}

Theorem~\ref{thm:CoisoPoiss} shows how reduction works for a hamiltonian Lie algebroid $A$ over a Poisson manifold: Since the anchor is tangent to $Z_\mathrm{cl}$, the Lie algebroid can be restricted to $Z_\mathrm{cl}$. If the leaf space of the characteristic distribution of $A|_{Z_\mathrm{cl}}$ is smooth, then it is a Poisson manifold \cite{MarsdenRatiu:1986}, called the \textbf{Poisson reduction of $A$}.

\section{Examples}
\label{sec:Examples}

\subsection{Bundles of Lie algebras}

A bundle of Lie algebras is a Lie algebroid with zero anchor. In this case, the opposite connection $\chD$, defined by Eq.~\eqref{eq:DcheckDef}, of every connection $D$ is zero. This shows that a bundle of Lie algebras  over a Poisson manifold is Poisson anchored with respect to every connection. This is the same as in the presymplectic case discussed in Section~6.1 of \cite{BlohmannWeinstein:HamLA}.

Since $\rho =  0$, a section $\mu$ of $A^*$ is a momentum section if and only if $\Pish\scal{D\mu}{a} = 0$ for every section $a$ of $A$. This is the case if and only if
\begin{equation*}
\begin{split}
  0 
  &= \bigl\langle \eta, \Pish \scal{D\mu}{a} \bigr\rangle
  = \bigl\langle \scal{D\mu}{a}, - \Pish\eta \bigr\rangle
  \\
  &= \scal{D_{- \Pish\eta}\mu}{a}
\end{split}
\end{equation*}
for all sections $a$ of $A$ and all sections $\eta$ of $A^*$. This shows that $\mu$ is a momentum section if and only if it is horizontal in the direction of the symplectic leaves of $(M, \Pi)$. One possible momentum section is $\mu = 0$.

For every momentum section we have $\Pi(\scal{D\mu}{a}, \scal{D\mu}{b}) = 0$, so that $\mu$ is bracket-compatible if and only if $\DA \mu = 0$. Since $\rho = 0$, $\DA \mu = -\langle \mu, [a,b] \rangle$. This shows that $\mu$ is bracket-compatible if and only if
\begin{equation*}
  \langle \mu, [a,b] \rangle = 0
\end{equation*}
for all sections $a$ and $b$ of $A$, which is the same condition as in the presymplectic case.

We conclude that a bundle of Lie algebras over a Poisson manifold is always hamiltonian with momentum section $\mu = 0$ and arbitrary connection $D$. If the fibre of $A$ over at least one point of every symplectic leaf of $M$ is semisimple, then $\mu = 0$ is the only momentum section.

\subsection{The cotangent Lie algebroid of a Poisson manifold}

Recall that the cotangent Lie algebroid of a Poisson manifold \cite[Thm.~4.1]{Vaisman:Lectures} is the vector bundle $A  = T^* M$ with anchor $\rho = - \Pish$ and Lie bracket
\begin{equation}
\label{eq:CotanBracket}
  [\alpha, \beta]
  =  
  - \Lie_{\Pish \alpha} \beta 
  + \Lie_{\Pish \beta} \alpha 
  + d\, \Pi(\alpha, \beta)
\end{equation}
for all 1-forms $\alpha$ and $\beta$ on $M$.

\subsubsection{Poisson anchored connections}

In the first step, we will determine the conditions for the cotangent Lie algebroid to be Poisson anchored with respect to some connection $D$ on $T^* M$. The $T^*M$-torsion~\eqref{eq:Atorsion} of $D$ is given by
\begin{equation}
\label{eq:TMstarTorsion}
  \TorTxM(\alpha, \beta) 
  = 
  - D_{\Pish \alpha} \beta
  + D_{\Pish \beta} \alpha
  - [\alpha, \beta]
  \,.
\end{equation}
The dual connection defined by~\eqref{eq:DPairing} is a (usual) connection on $TM$, so that there is the usual $TM$-torsion, which we denote by
\begin{equation}
\label{eq:TMTorsion}
  \TorTM(v,w) = D_v w - D_w v - [v,w]
  \,.
\end{equation}

\begin{Proposition}
\label{prop:DDPiTorsion}
Let $(M,\Pi)$ be a Poisson manifold and $D$ a connection on $T^*M$. Then
\begin{equation}
\label{eq:DDPiTorsion}
  (\chD_\gamma\Pi)(\alpha,\beta) 
  = -\bigl\langle \gamma, \TorTM(\Pish\alpha,\Pish\beta) \bigr\rangle
\end{equation}
for all $1$-forms $\alpha$, $\beta$, and $\gamma$.
\end{Proposition}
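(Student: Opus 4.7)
The plan is to expand both sides of the identity directly, observe that the terms involving the connection derivatives $D\gamma$ cancel between them, and identify the residual equation with the Jacobi identity $[\Pi,\Pi]=0$ for the Poisson bivector. The central tool is the decomposition of the opposite connection $\chD_\gamma$ as the Lie derivative $\Lie_{\rho\gamma}$ plus a correction involving $D\gamma$, together with the Leibniz rule of the dual connection on $TM$.

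First I would expand the LHS. From $\chD_\gamma v = [\rho\gamma,v]+\rho(D_v\gamma)$ one derives by duality $(\chD_\gamma\alpha)(v) = (\Lie_{\rho\gamma}\alpha)(v) - \alpha\bigl(\rho(D_v\gamma)\bigr)$. Substituting this into $(\chD_\gamma\Pi)(\alpha,\beta)=\rho\gamma\cdot\Pi(\alpha,\beta)-\Pi(\chD_\gamma\alpha,\beta)-\Pi(\alpha,\chD_\gamma\beta)$, and using $\rho = -\Pish$ together with the two identifications $\Pi(\mu,\nu) = \nu(\Pish\mu) = -\mu(\Pish\nu)$, the Lie-derivative contributions reassemble into $-(\Lie_{\Pish\gamma}\Pi)(\alpha,\beta)$ while the correction contributions give precisely $\Pi(D_{\Pish\beta}\gamma,\alpha) - \Pi(D_{\Pish\alpha}\gamma,\beta)$. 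For the RHS, expanding the torsion $\TorTM(\Pish\alpha,\Pish\beta)=D_{\Pish\alpha}\Pish\beta-D_{\Pish\beta}\Pish\alpha-[\Pish\alpha,\Pish\beta]$, pairing with $\gamma$, and applying $X\langle\gamma,Y\rangle = \langle D_X\gamma,Y\rangle + \langle\gamma,D_XY\rangle$, the same two connection-correction terms reappear with matching signs and therefore cancel in the comparison.

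What remains is the purely algebraic identity
\[
-(\Lie_{\Pish\gamma}\Pi)(\alpha,\beta) = -\Pish\alpha\cdot\Pi(\beta,\gamma) + \Pish\beta\cdot\Pi(\alpha,\gamma) + \gamma\bigl([\Pish\alpha,\Pish\beta]\bigr).
\]
Expanding $(\Lie_{\Pish\gamma}\Pi)$ via the standard Lie-derivative formula for a bivector and substituting $\beta(\Pish\alpha) = \Pi(\alpha,\beta)$, this rearranges into the cyclically symmetric relation
\[
\sum_{\mathrm{cyc}}\Pish\alpha\cdot\Pi(\beta,\gamma) = \sum_{\mathrm{cyc}}\alpha\bigl([\Pish\beta,\Pish\gamma]\bigr),
\]
which is one of the standard equivalent forms of $[\Pi,\Pi]=0$: both sides share the same failure of $C^\infty$-linearity in each argument, so their difference is tensorial, and on exact 1-forms $\alpha=df$, $\beta=dg$, $\gamma=dh$ it reduces (via $\Pish df = X_f$ and $[X_f,X_g]=X_{\{f,g\}}$) to the usual Jacobi identity $\sum_{\mathrm{cyc}}\{f,\{g,h\}\}=0$. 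The main obstacle will be careful sign-bookkeeping under the paper's conventions $\rho = -\Pish$ and the dual identifications of $\Pi$ with $\Pish$; once these are handled consistently, all non-Jacobi terms pair up and cancel, and the proposition follows.
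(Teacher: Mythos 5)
Your proposal is correct and follows essentially the same route as the paper's proof: both expand $(\chD_\gamma\Pi)(\alpha,\beta)$ via the Leibniz rules for $\chD$ and the dual connection, cancel the $D\gamma$-terms against those arising from the torsion, and close the argument with the cyclic identity $\sum_{\mathrm{cyc}}\Pish\alpha\cdot\Pi(\beta,\gamma)=\sum_{\mathrm{cyc}}\alpha\bigl([\Pish\beta,\Pish\gamma]\bigr)$ obtained by pairing $[\Pi,\Pi]=0$ with $\alpha\wedge\beta\wedge\gamma$. Your packaging of the opposite connection as $\Lie_{\rho\gamma}$ plus a tensorial correction, so that the residual term is exactly $-(\Lie_{\Pish\gamma}\Pi)(\alpha,\beta)$, is a harmless reorganization of the same computation, and your sign bookkeeping under $\rho=-\Pish$ checks out.
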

\begin{proof}
The left side of~\eqref{eq:DDPiTorsion} can be written as
\begin{equation*}
\begin{split}
  (\chD_\gamma\Pi)(\alpha, \beta)
  &=
  \chD_\gamma \bigl( \Pi(\alpha, \beta) \bigr)
  - \Pi(\chD_\gamma\alpha, \beta)
  - \Pi(\alpha, \chD_\gamma\beta)
  \\
  &=
  -\Pish\gamma \cdot \Pi(\alpha, \beta)
  + \langle \chD_\gamma\alpha, \Pish \beta \rangle
  - \langle \chD_\gamma\beta, \Pish \alpha \rangle
  \\
  &=
  -\Pish\gamma \cdot \Pi(\alpha, \beta)
  + \chD_\gamma\, \langle \alpha, \Pish \beta \rangle
  - \chD_\gamma\, \langle \beta, \Pish \alpha \rangle
  \\
  &{}\quad
  - \bigl\langle \alpha, \chD_\gamma(\Pish \beta) \bigr\rangle
  + \bigl\langle \beta, \chD_\gamma(\Pish \alpha) \bigr\rangle
  \\
  &=
  \Pish\gamma \cdot \Pi(\alpha, \beta)
  + \bigl\langle \alpha, 
    [\Pish\gamma, \Pish \beta] + \Pish(D_{\Pish\beta}\gamma)
    \bigr\rangle
  \\
  &{}\quad
  - \bigl\langle \beta, 
    [\Pish\gamma, \Pish \alpha] + \Pish(D_{\Pish\alpha}\gamma)
    \bigr\rangle
  \\
  &=
  \Pish\gamma \cdot \Pi(\alpha, \beta)
  + \bigl\langle \alpha, [\Pish\gamma, \Pish \beta] \bigr\rangle 
  - \bigl\langle \beta, [\Pish\gamma, \Pish \alpha] \bigr\rangle 
  \\
  &{}\quad
  - \bigl\langle D_{\Pish\beta}\gamma, \Pish\alpha \bigr\rangle
  + \bigl\langle D_{\Pish\alpha}\gamma, \Pish\beta \bigr\rangle
  \,.
\end{split}
\end{equation*}   
Pairing the Poisson condition $[\Pi,\Pi] = 0$ with $\alpha \wedge \beta \wedge \gamma$, we obtain the relation
\begin{equation*}
\begin{split}
  0 
  &= \Pish\alpha \cdot \Pi(\beta, \gamma)
  + \Pish\beta \cdot \Pi(\gamma, \alpha)
  + \Pish\gamma \cdot \Pi(\alpha, \beta)
  \\
  &{}\quad
  - \bigl\langle \alpha, [\Pish\beta, \Pish \gamma] \bigr\rangle
  - \bigl\langle \beta, [\Pish\gamma, \Pish \alpha] \bigr\rangle
  - \bigl\langle \gamma, [\Pish\alpha, \Pish \beta] \bigr\rangle
  \,.
\end{split}
\end{equation*}
Using this relation, we can write the right side of~\eqref{eq:DDPiTorsion} as
\begin{equation*}
\begin{split}
  (\chD_\gamma\Pi)(\alpha, \beta)
  &=
  - \Pish\alpha \cdot \Pi(\beta, \gamma)
  + \Pish\beta \cdot \Pi(\alpha, \gamma)
  + \bigl\langle \gamma, [\Pish\alpha, \Pish \beta] \bigr\rangle 
  \\
  &{}\quad
  - \langle D_{\Pish\beta}\gamma, \Pish\alpha \rangle
  + \langle D_{\Pish\alpha}\gamma, \Pish\beta \rangle
  \\
  &=
  - \Pish\alpha \cdot \langle\gamma, \Pish\beta\rangle
  + \langle D_{\Pish\alpha}\gamma, \Pish\beta \rangle
  \\
  &{}\quad
  + \Pish\beta \cdot \langle\gamma, \Pish\alpha \rangle
  - \langle D_{\Pish\beta}\gamma, \Pish\alpha \rangle
  + \bigl\langle \gamma, [\Pish\alpha, \Pish \beta] \bigr\rangle 
  \\
  &=
  - \bigl\langle\gamma, 
    D_{\Pish\alpha}\Pish\beta 
  - D_{\Pish\beta}\Pish\alpha 
  - [\Pish\alpha, \Pish \beta]
  \bigr\rangle
  \\
  &=
  - \bigl\langle\gamma, 
  \TorTM(\Pish\alpha, \Pish\beta)
  \bigr\rangle
  \,,
\end{split}
\end{equation*}   
where in the last step we have used the Definition~\eqref{eq:TMTorsion} of the $TM$-torsion.
\end{proof}

\begin{Corollary}
\label{cor:CotanPoissonAnchor}
The cotangent Lie algebroid $A = T^*M$ of a Poisson manifold $(M,\Pi)$ is Poisson anchored with respect to a connection on $T^*M$ if and only if the torsion of the dual connection on $TM$ vanishes on the characteristic distribution $\Pish(T^*M)$.
\end{Corollary}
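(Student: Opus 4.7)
The plan is to read off the corollary directly from Proposition~\ref{prop:DDPiTorsion}, which supplies the identity
\begin{equation*}
  (\chD_\gamma\Pi)(\alpha,\beta)
  = -\bigl\langle \gamma, \TorTM(\Pish\alpha,\Pish\beta) \bigr\rangle
\end{equation*}
for all 1-forms $\alpha,\beta,\gamma$. The axiom (H1$_{\mathrm{Poi}}$) says $\chD\Pi = 0$, i.e., the left-hand side vanishes for all choices of $\alpha,\beta,\gamma$; so being Poisson anchored is equivalent to the right-hand side vanishing universally.

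For the forward direction, I would argue that since $\gamma$ is arbitrary and the pairing $T^*M \otimes TM \to \bbR$ is non-degenerate, the vanishing of $\bigl\langle \gamma, \TorTM(\Pish\alpha,\Pish\beta) \bigr\rangle$ for all $\gamma$ forces $\TorTM(\Pish\alpha,\Pish\beta) = 0$ for all $\alpha,\beta$. Since the characteristic distribution is by definition $\Pish(T^*M)$, this is the same as saying $\TorTM$ vanishes on pairs of vectors in the characteristic distribution. For the converse, if $\TorTM$ vanishes on $\Pish(T^*M) \times \Pish(T^*M)$, then in particular $\TorTM(\Pish\alpha,\Pish\beta) = 0$ for all 1-forms $\alpha,\beta$, so the right-hand side of the identity vanishes identically, giving $\chD\Pi = 0$.

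There is no real obstacle here; the content of the corollary lies entirely in Proposition~\ref{prop:DDPiTorsion}. The only point requiring a sentence of justification is the translation between ``vanishes on the characteristic distribution'' and ``vanishes on all pairs $(\Pish\alpha,\Pish\beta)$,'' which is immediate from the definition $\Pish(T^*M)$ of the characteristic distribution and the fact that every element of this distribution at a point is of the form $(\Pish\alpha)_m$ for some 1-form $\alpha$ (extend a covector to a 1-form by any local section).
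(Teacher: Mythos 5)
Your proposal is correct and matches the paper's intent exactly: the corollary is stated there without a separate proof, precisely because it follows from Proposition~\ref{prop:DDPiTorsion} by the non-degeneracy of the pairing in $\gamma$ and the observation that every vector in $\Pish(T^*M)$ at a point is of the form $(\Pish\alpha)_m$ for some 1-form $\alpha$.
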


\begin{Remark}
From an arbitrary connection $D$ on $TM$ we obtain a connection with vanishing torsion by
\begin{equation*}
  D'_v w = D_v w - \tfrac{1}{2} T_{D}(v,w)
  \,.
\end{equation*}
By Corollary~\ref{cor:CotanPoissonAnchor}, the dual connection to $D'$ is Poisson anchored.
\end{Remark}

Let us determine the space of connections whose torsion satisfies the condition of Corollary~\ref{cor:CotanPoissonAnchor}. The torsions of two connections $D'$ and $D$ on $TM$ are related by
\begin{equation}
\label{eq:TorsionChange}
  \TorTM'(v,w) = \TorTM(v,w) + \Gamma(v,w) - \Gamma(w,v)
  \,,
\end{equation}
where $\Gamma$ is the difference tensor of the connections, i.e.~the $C^\infty(M)$-bilinear map $\Gamma: \frakX(M) \times \frakX(M) \to \frakX(X)$ defined by
\begin{equation}
\label{eq:DDiffTensor}
 \Gamma(v,w) = D'_v w - D_v w \,.
\end{equation}
From Proposition~\ref{prop:DDPiTorsion} we deduce the following result.

\begin{Corollary}
Let the cotangent algebroid of a Poisson manifold $(M,\Pi)$ be Poisson anchored with respect to a connection $D$. Then it is Poisson anchored with respect to another connection $D'$ if and only if the difference tensor~\eqref{eq:DDiffTensor} satisfies
\begin{equation}
\label{eq:GammaPoiss}
  \bigl\langle \gamma, 
    \Gamma(\Pish \alpha, \Pish\beta) 
  - \Gamma(\Pish \beta, \Pish\alpha)   
  \bigr\rangle
  = 0
\end{equation}
for all 1-forms $\alpha$, $\beta$,and $\gamma$. The Poisson anchored connections form an affine space modelled on the vector space of morphisms of vector bundles $\Gamma: TM \times_M TM \to TM$ that satisfy~\eqref{eq:GammaPoiss}.
\end{Corollary}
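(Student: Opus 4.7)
The plan is to apply Corollary~\ref{cor:CotanPoissonAnchor} to both connections and then compare the resulting torsion conditions using the torsion-difference formula~\eqref{eq:TorsionChange}. First, I would recall that Corollary~\ref{cor:CotanPoissonAnchor} characterizes Poisson-anchoredness of a connection on $T^*M$ by the vanishing of the $TM$-torsion of the dual connection on the characteristic distribution: concretely, $D$ (respectively $D'$) is Poisson anchored if and only if $\TorTM(\Pish\alpha,\Pish\beta) = 0$ (respectively $\TorTM'(\Pish\alpha,\Pish\beta) = 0$) for all $\alpha,\beta \in \Omega^1(M)$.

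Next, I would invoke Eq.~\eqref{eq:TorsionChange} with $v = \Pish\alpha$ and $w = \Pish\beta$ to obtain
\begin{equation*}
  \TorTM'(\Pish\alpha,\Pish\beta)
  = \TorTM(\Pish\alpha,\Pish\beta)
  + \Gamma(\Pish\alpha,\Pish\beta)
  - \Gamma(\Pish\beta,\Pish\alpha).
\end{equation*}
By the standing hypothesis that $D$ is Poisson anchored, the first term on the right vanishes, so $D'$ is Poisson anchored if and only if $\Gamma(\Pish\alpha,\Pish\beta) - \Gamma(\Pish\beta,\Pish\alpha) = 0$ as a vector field, for all $\alpha,\beta \in \Omega^1(M)$. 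Since a vector field is zero if and only if its pairing with every 1-form $\gamma$ vanishes, this is exactly condition~\eqref{eq:GammaPoiss}, establishing the stated equivalence.

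For the affine space assertion, I would observe that the space of linear connections on $T^*M$ is an affine space whose underlying vector space is the space of tensors $\Gamma$ as in~\eqref{eq:DDiffTensor}, equivalently bundle morphisms $TM \times_M TM \to TM$. Given the reference Poisson-anchored connection $D$, every other connection is of the form $D + \Gamma$ for a unique such $\Gamma$, and the equivalence just proved shows that $D+\Gamma$ is Poisson anchored precisely when $\Gamma$ satisfies~\eqref{eq:GammaPoiss}. Because~\eqref{eq:GammaPoiss} is a linear condition on $\Gamma$, the set of admissible difference tensors forms a linear subspace, so the Poisson-anchored connections form an affine subspace modelled on it.

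The argument is essentially a two-line deduction from Corollary~\ref{cor:CotanPoissonAnchor} and~\eqref{eq:TorsionChange}, so there is no substantial obstacle; the only point requiring some care is keeping track of the convention that $D$ denotes both the connection on $T^*M$ and its dual on $TM$, which is already established in the paper.
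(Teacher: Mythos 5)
Your argument is correct and is exactly the deduction the paper intends (the paper gives no written proof, simply stating that the corollary follows from Proposition~\ref{prop:DDPiTorsion} together with~\eqref{eq:TorsionChange}): combine the torsion characterization of Poisson anchoring with the torsion-difference formula and use that $D$ is anchored to kill the $\TorTM$ term. The affine-space remark via linearity of~\eqref{eq:GammaPoiss} is also the intended one.
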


\subsubsection{Momentum sections}

\begin{Theorem}
\label{thm:MomSec}
Let $(M,\Pi)$ be a Poisson manifold. Assume that there is a 1-form $\eta$ on $M$ such that the vector field $\mu = \Pish \eta$ is nowhere vanishing. Then, for a suitable Poisson-anchored connection, the cotangent Lie algebroid of $(M,\Pi)$ is weakly hamiltonian, with momentum section $\mu$.
\end{Theorem}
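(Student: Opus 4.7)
The plan is to construct explicitly a connection $D$ on the cotangent Lie algebroid $A = T^*M$ such that axioms (H1$_{\mathrm{Poi}}$) and (H2$_{\mathrm{Poi}}$) of Definition~\ref{def:HamLAPoisson} hold with momentum section $\mu = \Pish\eta \in \Gamma(A^*) = \frakX(M)$. Via duality I will equivalently build a connection $\nabla$ on $TM$ and take $D$ to be its dual, so that the covariant derivative of $\mu$ on $A^* = TM$ is simply $\nabla\mu$.

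Two reformulations reduce the task. By Corollary~\ref{cor:CotanPoissonAnchor}, (H1$_{\mathrm{Poi}}$) amounts to the $TM$-torsion of $\nabla$ vanishing on the characteristic distribution $\Pish(T^*M)$. For (H2$_{\mathrm{Poi}}$), the anchor is $\rho = -\Pish$ and $\langle D\mu,\alpha\rangle(v) = \langle\alpha,\nabla_v\mu\rangle$, so it suffices to arrange $\nabla_v\mu = -v$ for every $v \in TM$: then $\langle D\mu,\alpha\rangle = -\alpha$ and hence $\iota_{\langle D\mu,\alpha\rangle}\Pi = -\Pish\alpha = \rho\alpha$ for every 1-form $\alpha$.

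To build such a $\nabla$, I would start from any torsion-free connection $\nabla^0$ on $TM$, for instance the Levi-Civita connection of a Riemannian metric $g$ on $M$. Because $\mu$ is nowhere vanishing, the 1-form $\xi := g(\mu,\cdot)/g(\mu,\mu)$ is globally defined and satisfies $\xi(\mu)=1$. Setting $\psi := -(\id + \nabla^0\mu) \in \Gamma(T^*M \otimes TM)$, I define the symmetric $(2,1)$-tensor
\[
  \Gamma(X,Y) := \xi(X)\psi(Y) + \xi(Y)\psi(X) - \xi(X)\xi(Y)\psi(\mu)
\]
and put $\nabla := \nabla^0 + \Gamma$. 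A direct calculation using $\xi(\mu)=1$ yields $\Gamma(X,\mu)=\psi(X)$, whence $\nabla_X\mu = \nabla^0_X\mu + \psi(X) = -X$ for every $X$. Since $\Gamma$ is symmetric in $(X,Y)$, the torsion of $\nabla$ equals that of $\nabla^0$, i.e.\ vanishes identically. The dual connection $D$ on $T^*M$ is therefore Poisson-anchored, and $\mu$ is a $D$-momentum section.

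The main obstacle is to design the correction $\Gamma$ so as to be symmetric in its arguments (preserving the Poisson-anchoring) while simultaneously achieving the prescribed value $\nabla\mu = -\id$ in the direction of $\mu$. The symmetrization above, made possible globally by the 1-form $\xi$ dual to $\mu$, reconciles these two requirements; this is where the assumption that $\mu$ is nowhere vanishing is crucial.
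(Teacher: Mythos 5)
Your proof is correct. It shares with the paper's proof the one essential device: correcting a torsion-preserving connection by a \emph{symmetric} tensor $\Gamma$ manufactured from a covector dual to the nowhere-vanishing $\mu$ (the paper uses $\bar\eta$ with $\scal{\bar\eta}{\mu}=1$ and literally the same three-term symmetrization for its $C(\alpha,v,w)$ that you use for $\Gamma(X,Y)$). The genuine difference is which equation you solve with that device. The paper only arranges the condition \eqref{eq:MomSec1}, $\Pish\scal{\alpha}{D'\mu}=-\Pish\alpha$, which forces it to compute the defect tensor $B(\alpha,v)$ of \eqref{eq:MomSec2}, an expression involving $D\Pi$ and $D\eta$ and hence the hypothesis $\mu=\Pish\eta$. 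You instead impose the strictly stronger normalization $\nabla_X\mu=-X$ for all $X$, whose defect $\psi=-(\id+\nabla^0\mu)$ requires no computation; since $\Gamma\mapsto\Gamma(\cdot,\mu)$ is surjective onto $(1,1)$-tensors as $\Gamma$ ranges over symmetric tensors (precisely because $\mu$ vanishes nowhere), this is solvable, and it gives $\scal{D\mu}{\alpha}=-\alpha$ on the nose, so (H2$_{\mathrm{Poi}}$) follows after applying $\Pish$. This buys two things: the argument is shorter, and the hypothesis that $\mu$ be of the form $\Pish\eta$ is never used, so you actually prove that \emph{every} nowhere-vanishing vector field on $(M,\Pi)$ is a momentum section of $T^*M$ for a suitable torsion-free (hence, by Corollary~\ref{cor:CotanPoissonAnchor}, Poisson-anchored) connection --- a strengthening consistent with Example~\ref{ex:Zeromu} and with condition (i) of Proposition~\ref{prop:SympHLAs2}, which likewise makes no reference to the Poisson structure.
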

\begin{proof}
The proof is similar to that of Proposition~6.7 in \cite{BlohmannWeinstein:HamLA}. Let $D$ be a Poisson anchored connection. In general, $\mu$ will not be a $D$-momentum section. Let $D'$ be another connection. From~\eqref{eq:DPairing} it follows that the difference of the connections satisfy
\begin{equation*}
  \bigl\langle (D'_v - D_v)\alpha, w \bigr\rangle
  =  - \bigl\langle \alpha, (D'_v - D_v) w \bigr\rangle
  = - \bigl\langle \alpha, \Gamma(v,w) \bigr\rangle
  \,,
\end{equation*}
where $\Gamma$ is defined by~\eqref{eq:DDiffTensor}. The condition for $\mu$ to be a $D'$-momentum section is
\begin{equation}
\label{eq:MomSec1}
  - \Pish\alpha =  \Pish \scal{\alpha}{D'\mu}
  \,,
\end{equation}
for all 1-forms $\alpha$ on $M$. By pairing the right side with a 1-form $\beta$, we obtain
\begin{equation*}
\begin{split}
  \bigl\langle \beta, \Pish \langle\alpha, D'\mu \rangle 
  \bigr\rangle
  &=
  - \langle \alpha, D'_{\Pish\beta} \mu \rangle
  \\
  &=
  - \Pish\beta \cdot \langle \alpha, \mu \rangle
  + \langle D'_{\Pish\beta} \alpha,  \mu \rangle
  \\
  &=
  - \Pish\beta \cdot \langle \alpha, \mu \rangle
  + \langle D_{\Pish\beta} \alpha,  \mu \rangle
  + \bigl\langle (D'_{\Pish\beta} - D_{\Pish\beta})\alpha,  \mu \bigr\rangle
  \\
  &=
  - \Pish\beta \cdot \Pi(\eta, \alpha)
  + \Pi(\eta, D_{\Pish\beta} \alpha) 
  - \bigl\langle \alpha, 
  \Gamma(\Pish\beta, \mu) \bigr\rangle
  \\
  &=
  - (D_{\Pish\beta}\Pi)(\eta, \alpha)
  - \Pi(D_{\Pish\beta} \eta, \alpha) 
  - \bigl\langle \alpha, \Gamma(\Pish\beta, \Pish\eta)
    \bigr\rangle
  \,.
\end{split}    
\end{equation*}
With this relation~\eqref{eq:MomSec1} takes the form
\begin{equation}
\label{eq:MomSec2}
\begin{split}
  \bigl\langle \alpha, \Gamma(\Pish\beta, \Pish\eta) \bigr\rangle
  &= - (D_{\Pish\beta}\Pi)(\eta, \alpha)
  + \Pi( \alpha,  D_{\Pish\beta} \eta)
  - \langle\alpha, \Pish\beta \rangle
  \\
  &= B(\alpha, \Pish\beta)
  \,,    
\end{split}
\end{equation}
where 
\begin{equation*}
  B(\alpha, v)
  := 
  - (D_{v}\Pi)(\eta, \alpha)
  + \Pi( \alpha,  D_{v} \eta)
  - \langle\alpha, v \rangle
\end{equation*}
is $C^\infty(M)$-linear in $\alpha$ and $v$. 

Since $\mu$ is a nowhere-vanishing vector field on $M$, there exists a 1-form $\bar{\eta}$ on $M$, such that $\scal{\bar{\eta}}{\mu} = \Pi(\eta,\bar{\eta}) = 1$. Consider the expression 
\begin{equation*}
  C(\alpha,v,w) 
  := \scal{w}{\bar{\eta}}B(\alpha,v) 
  + \scal{v}{\bar{\eta}}B(\alpha,w)
  - \scal{v}{\bar{\eta}}\scal{w}{\bar{\eta}}B(\alpha,\Pish\eta)
  \,,
\end{equation*}
which is $C^\infty(M)$-linear in $\alpha \in \Omega^1(M)$ and $v,w \in \frakX^1(M)$, so that there is a unique $C^\infty(M)$-linear map $\Gamma: \frakX(M) \otimes \frakX(M) \to \frakX(M)$ that satisfies
\begin{equation*}
  \bigl\langle \alpha, \Gamma(v, w) \bigr\rangle
  = C(\alpha,v,w)
  \,.
\end{equation*}
for all $\alpha$, $v$, and $w$. Let now $D' = D + \Gamma$ for this choice of $\Gamma$.

Evaluating $C$ on $v = \Pish\beta$ and $w = \Pish\eta$ yields
\begin{equation*}
\begin{split}
  C(\alpha,\Pish\beta,\Pish\eta)
  &= 
    \langle \Pish\eta, \bar{\eta}\rangle B(\alpha, \Pish\beta)
  + \langle \Pish\beta, \bar{\eta}\rangle B(\alpha, \Pish\eta)
  \\
  &{}\quad
  - \langle \Pish\beta, \bar{\eta} \rangle 
    \langle \Pish\eta, \bar{\eta} \rangle
    B(\alpha, \Pish\eta)
  \\
  &= B(\alpha, \Pish\beta)
  \,.
\end{split}    
\end{equation*}
This shows that $\Gamma$ satisfies~\eqref{eq:MomSec2}, which means that $D'$ satisfies~\eqref{eq:MomSec1}, so that $\mu$ is a $D'$-momentum map. Furthermore, $C$ is symmetric in $v$ and $w$, so that $\Gamma(v,w)$ is symmetric in $v$ and $w$. It follows from~\eqref{eq:TorsionChange}, that $D'$ has the same $TM$-torsion as $D$. (In particular, if $D$ is torsion-free, so is $D'$.) Since by assumption $D$ was assumed to be Poisson anchored, we conclude by Proposition~\ref{prop:DDPiTorsion} that $D'$ is Poisson anchored.
\end{proof}

\begin{Corollary}
\label{cor:RegPoiss}
Let $(M,\Pi)$ be a regular Poisson manifold with non-zero Poisson structure. If the characteristic distribution $\Pish(T^* M) \subset TM$ admits a nowhere vanishing section, then the cotangent Lie algebroid of $(M,\Pi)$ is weakly hamiltonian.
\end{Corollary}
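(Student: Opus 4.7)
The plan is to reduce this to Theorem~\ref{thm:MomSec} by producing a 1-form $\eta$ on $M$ whose image $\mu = \Pish\eta$ is the given nowhere-vanishing section $v \in \Gamma(\Pish(T^*M))$. In other words, the whole task is to show that a nowhere-vanishing section of $\Pish(T^*M)$ can be lifted to a 1-form, after which Theorem~\ref{thm:MomSec} does the rest.

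First I would use the regularity assumption to observe that $\Pish: T^*M \to TM$ has constant rank, so $\Pish(T^*M)$ is a genuine vector subbundle of $TM$ and $\ker \Pish$ is a vector subbundle of $T^*M$. This yields a short exact sequence of vector bundles
\begin{equation*}
  0 \longrightarrow \ker \Pish \longrightarrow T^*M \stackrel{\Pish}{\longrightarrow} \Pish(T^*M) \longrightarrow 0\,.
\end{equation*}
Over a paracompact base such a sequence splits (pick a bundle metric on $T^*M$ and take the orthogonal complement of $\ker \Pish$, or glue local splittings with a partition of unity). Any splitting gives a smooth right inverse $\sigma: \Pish(T^*M) \to T^*M$.

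Then I would set $\eta := \sigma(v) \in \Omega^1(M)$. By construction $\Pish \eta = v$, which is nowhere-vanishing by hypothesis. Theorem~\ref{thm:MomSec} now applies directly: it produces a Poisson-anchored connection $D'$ on $T^*M$ for which $\mu = \Pish\eta = v$ is a $D'$-momentum section, so the cotangent Lie algebroid of $(M,\Pi)$ is weakly hamiltonian. The assumption that $\Pi$ is non-zero is used implicitly: it guarantees that the rank of $\Pish(T^*M)$ is positive, so that the hypothesis of admitting a nowhere-vanishing section is non-vacuous.

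The only subtle point is the splitting of the short exact sequence, and I do not expect this to be a real obstacle: it is a standard consequence of paracompactness and constant rank. The essential content of the corollary is therefore the translation of ``nowhere-vanishing section of the characteristic distribution'' into ``nowhere-vanishing $\Pish\eta$'' — once this translation is made, the existence of a weakly hamiltonian structure is delivered by Theorem~\ref{thm:MomSec}.
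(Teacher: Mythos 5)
Your proposal is correct and follows essentially the same strategy as the paper: both reduce the corollary to Theorem~\ref{thm:MomSec} by producing a global 1-form $\eta$ with $\Pish\eta$ equal to the given nowhere-vanishing section, which is the only real content of the argument. The sole difference is in how the lift is constructed --- the paper writes $\eta$ explicitly in local Darboux coordinates and glues the local lifts with a partition of unity (using the $C^\infty(M)$-linearity of $\Pish$), whereas you split the short exact sequence $0 \to \ker\Pish \to T^*M \to \Pish(T^*M) \to 0$ of vector bundles abstractly; both devices are standard and equally valid given the constant-rank hypothesis.
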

\begin{proof}
Let the nowhere vanishing section of $\Pish(T^* M)$ be denoted by $\mu$. Since $\Pi$ is regular, every point $m \in M$ has a neighborhood $U$ with Darboux coordinates $(q^1, \ldots, q^r, p_1, \ldots, p_r, y^1, \ldots, y^{\dim M - 2r})$  in which the Poisson bivector field takes the form $\Pi = \frac{\partial}{\partial q^i} \wedge \frac{\partial}{\partial p_i}$. In local coordinates the section $\mu$ takes the form $\mu = \alpha^i \frac{\partial}{\partial q^i} + \beta_i \frac{\partial}{\partial p_i}$. It is easy to check that $\eta = \beta_i dq^i - \alpha^i dp_i$ satisfies $\mu = \Pish\eta$ on $U$. Since the map $\Pish: \Omega^1(M) \mapsto \frakX(M)$ is $C^\infty(M)$-linear, we can use a standard partition of unity argument to show that there is a globally defined form $\eta \in \Omega^1(M)$ such that $\mu = \Pish\eta$. Now we can apply Theorem~\ref{thm:MomSec}.
\end{proof}

The assumption of Theorem~\ref{thm:MomSec} that $\mu = \Pish\eta$ be nowhere vanishing implies that $\Pi$ is nowhere vanishing. This is not necessary for the cotangent Lie algebroid to have a momentum map, as the Example~\ref{ex:LiePoisson} of Lie-Poisson manifolds shows. Even when $\Pi$ is regular, as is assumed in Corollary~\ref{cor:RegPoiss}, a momentum section need not be nowhere-vanishing, nor of the form $\Pish\eta$, as the following example shows.

\begin{Example}
\label{ex:Zeromu}
Let $M = \bbR^3$ with Poisson structure $\Pi=\del_x\wedge\del_y$. The trivial connection $D$ on $T^*M$ is Poisson anchored because its dual, the trivial connection on $TM$, is torsion-free. Consider $\mu := -x \del_x -y \del_y$. We have
\begin{equation*}
  \scal{D\mu}{\alpha}
  = -\alpha_x dx -\alpha_y dy = -\alpha +\alpha_z dz.
\end{equation*}
for all 1-forms $\alpha = \alpha_x dx +\alpha_y dy + \alpha_z dz$. It follows that
\begin{equation*}
  \Pish \scal{D\mu}{\alpha}
  = \Pish( -\alpha +\alpha_z dz) 
  = - \Pish(\alpha)
  \,.
\end{equation*}
This shows that $\mu$ is a momentum section, even though it vanishes on $\{(0,0)\} \times \bbR$.

Consider now $\mu' := \mu - \del_z$. Since $\mu'$ and $\mu$ differ by a constant vector field, their covariant derivatives are equal, $D\mu = D\mu'$, which implies that $\mu'$ is a momentum section, too. Since $\mu'$ is not tangent to $\Pish(T^* M)$, it cannot be of the form $\Pish\eta$.
\end{Example}

\begin{Remark}
The vector field $\mu$ in Example~\ref{ex:Zeromu} is the negative Euler vector field in Darboux coordinates of the symplectic leaves. For the symplectic case it was shown in~\cite[Thm.~6.9]{BlohmannWeinstein:HamLA} that the vector field of a momentum section may have isolated zeros at which it has the form of the negative Euler vector field in Darboux coordinates. Such a vector field exists on a compact manifold if and only if the Euler characteristic is nonnegative. (See Theorem~\ref{thm:WeaklyHamEquiv} below.)
\end{Remark}

\subsubsection{Bracket-compatibility of momentum sections}

The Lie algebroid cohomology of the cotangent Lie algebroid of a Poisson manifold is the same as the Poisson cohomology. In particular, the differential of a vector field $v$ on $M$, viewed as Lie algebroid 1-form on $T^* M$, is given by
\begin{equation*}
\begin{split}
  d_{T^* M} v 
  &= d_\Pi v = [v,\Pi] \\
  &= \Lie_v \Pi
  \,.
\end{split}
\end{equation*}

\begin{Proposition}
\label{prop:Liouville}
Let $\mu \in \frakX(M)$ be a momentum section of the cotangent Lie algebroid of a Poisson manifold $(M,\Pi)$. Then $\mu$ is bracket-compatible if and only if it is a Liouville vector field, that is $\Lie_\mu \Pi = \Pi$.
\end{Proposition}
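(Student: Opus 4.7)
The plan is to reduce the axiom (H3$_{\mathrm{Poi}}$) directly to the Liouville equation by expanding both sides. The ingredients are: the hypothesis that $\mu$ is already a momentum section, the fact that for the cotangent Lie algebroid $\rho = -\Pish$, and the identification $d_{T^*M}\mu = \Lie_\mu \Pi$ recorded immediately before the proposition.

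First, I would extract a geometric consequence of (H2$_{\mathrm{Poi}}$). Rewriting that axiom as $-\Pish\alpha = \Pish\langle D\mu, \alpha\rangle$, one sees that for every 1-form $\alpha$ the element
\begin{equation*}
  \gamma_\alpha := \langle D\mu, \alpha\rangle + \alpha
\end{equation*}
lies in $\ker\Pish$. In other words, $\langle D\mu, \alpha\rangle$ and $-\alpha$ differ only by something that $\Pi$ cannot see.

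Next I would use this to simplify the right-hand side of (H3$_{\mathrm{Poi}}$). Since $\Pi$, viewed as a bilinear form on $\Omega^1(M)$, satisfies $\Pi(\cdot,\gamma) = 0$ as soon as $\gamma \in \ker\Pish$, expanding $\Pi(-\alpha + \gamma_\alpha, -\beta + \gamma_\beta)$ by bilinearity kills every term involving a $\gamma$, leaving
\begin{equation*}
  \Pi\bigl(\langle D\mu, \alpha\rangle, \langle D\mu, \beta\rangle\bigr)
  = \Pi(\alpha, \beta).
\end{equation*}

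Finally, using $(d_{T^*M}\mu)(\alpha, \beta) = (\Lie_\mu \Pi)(\alpha, \beta)$, the axiom (H3$_{\mathrm{Poi}}$) at $(\alpha, \beta)$ becomes $(\Lie_\mu \Pi)(\alpha, \beta) = \Pi(\alpha, \beta)$. Since $\alpha$ and $\beta$ range over all 1-forms, this is equivalent to $\Lie_\mu \Pi = \Pi$, i.e.\ $\mu$ is Liouville. Every step is an equivalence, so the ``if and only if'' follows. There is no serious obstacle; the only subtlety is keeping the sign $\rho = -\Pish$ straight when translating (H2$_{\mathrm{Poi}}$) into the kernel statement, and one could alternatively perform the same reduction via the torsion reformulation of Proposition~\ref{prop:H3PoissonTorsion}, but the direct route above is shorter.
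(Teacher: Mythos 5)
Your proposal is correct and follows essentially the same route as the paper: both use the momentum-section condition $\Pish\langle D\mu,\alpha\rangle = -\Pish\alpha$ to reduce the right-hand side of (H3$_{\mathrm{Poi}}$) to $\Pi(\alpha,\beta)$ (you via the decomposition $\langle D\mu,\alpha\rangle = -\alpha + \gamma_\alpha$ with $\gamma_\alpha \in \ker\Pish$, the paper by moving $\Pish$ across the pairing twice), and then invoke $d_{T^*M}\mu = \Lie_\mu\Pi$. The argument is sound as stated.
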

\begin{proof}
The right side of the condition (H3$_\mathrm{Poi}$) of bracket-compatibility of $\mu$ is given by 
\begin{equation*}
\begin{split}
  \Pi \bigl(
     \scal{D\mu}{\alpha}, \scal{D\mu}{\beta} \bigr)
  &= \bigl\langle \Pish\scal{D\mu}{\alpha},
    \scal{D\mu}{\beta} \bigr\rangle
  \\
  &= \bigl\langle -\Pish\alpha, \scal{D\mu}{\beta}
     \bigr\rangle
  \\
  &= \bigl\langle \alpha, \Pish\scal{D\mu}{\beta}
     \bigr\rangle
  \\
  &= \scal{\alpha}{-\Pish\beta}
  \\
  &= \Pi(\alpha,\beta)
\end{split}
\end{equation*}
for all $\alpha,\beta \in \Omega^1(M)$, where we have used that $\Pish\langle D\mu, \alpha \rangle = - \Pish\alpha$ because $\mu$ is a momentum section. It follows that the condition for bracket compatibility is
\begin{equation*}
  \Lie_\mu \Pi = d_\Pi \mu = d_{T^* M} \mu = \Pi
  \,,
\end{equation*}
which was the claim.
\end{proof}

\begin{Proposition}
\label{prop:HamLA}
Let $(M,\Pi)$ be a Poisson manifold. Assume that there is a 1-form $\eta$ on $M$ such that the vector field $\mu = \Pish \eta$ is nowhere vanishing and Liouville, $\Lie_\mu \Pi = \Pi$. Then, for a suitable Poisson-anchored connection, the cotangent Lie algebroid of $(M,\Pi)$ is hamiltonian, with momentum section $\mu$.
\end{Proposition}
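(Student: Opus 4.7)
The plan is to derive the statement directly from the two preceding results in this subsection, which together furnish all the pieces needed. Since $\mu = \Pish\eta$ is nowhere vanishing by hypothesis, Theorem~\ref{thm:MomSec} supplies a Poisson-anchored connection $D$ on $T^*M$ for which $\mu$ is a $D$-momentum section. This already secures the axioms (H1$_{\mathrm{Poi}}$) and (H2$_{\mathrm{Poi}}$) of Definition~\ref{def:HamLAPoisson}, so $(T^*M, D, \mu)$ is weakly hamiltonian.

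To upgrade this to a hamiltonian structure, I would invoke Proposition~\ref{prop:Liouville}, which asserts that any $D$-momentum section $\mu$ of the cotangent Lie algebroid is bracket-compatible if and only if $\Lie_\mu \Pi = \Pi$. The second hypothesis says precisely that $\mu$ is Liouville, so axiom (H3$_{\mathrm{Poi}}$) holds automatically, and the claim follows.

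There is no genuine obstacle. The one observation worth making explicit is that the Liouville characterization of bracket-compatibility in Proposition~\ref{prop:Liouville} depends only on $\mu$ being a momentum section and not on which Poisson-anchored connection realizes this, so the connection $D$ produced by Theorem~\ref{thm:MomSec} requires no further modification. The proof is thus a one-line synthesis of Theorem~\ref{thm:MomSec} and Proposition~\ref{prop:Liouville}.
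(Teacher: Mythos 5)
Your proposal is correct and follows exactly the paper's own (one-line) proof: Theorem~\ref{thm:MomSec} yields the Poisson-anchored connection making $\mu$ a momentum section, and Proposition~\ref{prop:Liouville} converts the Liouville hypothesis into bracket-compatibility. Your added remark that Proposition~\ref{prop:Liouville} depends only on $\mu$ being a momentum section, not on the particular connection, is a worthwhile clarification but does not change the argument.
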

\begin{proof}
This follows from Theorem~\ref{thm:MomSec} and Proposition~\ref{prop:Liouville}.
\end{proof}

The condition of bracket-compatibility of a momentum section of the form $\mu = \Pish\eta$ can be expressed directly in terms of $\eta$ as follows.

\begin{Proposition}
Let $\eta$ be a 1-form on the Poisson manifold $(M,\Pi)$. Then the vector field $\mu = \Pish\eta$ is Liouville if and only if
\begin{equation}
\label{eq:etaLiouville}
  (d\eta)\bigl(\Pish\alpha, \Pish\beta \bigr)
  = -\Pi(\alpha, \beta)
\end{equation}
for all $\alpha, \beta \in \Omega^1(M)$.
\end{Proposition}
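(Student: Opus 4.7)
The plan is to reduce the Liouville criterion $\Lie_{\Pish\eta}\Pi = \Pi$ (equivalent to bracket-compatibility by Proposition~\ref{prop:Liouville}) to a scalar identity by evaluating both sides on pairs of 1-forms. Concretely, it suffices to establish the pointwise identity
\[
  (\Lie_{\Pish\eta}\Pi)(\alpha, \beta) = -(d\eta)(\Pish\alpha, \Pish\beta)
\]
for all $\alpha, \beta \in \Omega^1(M)$; the Liouville condition then translates into $-(d\eta)(\Pish\alpha, \Pish\beta) = \Pi(\alpha, \beta)$, which is exactly \eqref{eq:etaLiouville}.

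To prove this key identity I would work with the sharps: apply the Leibniz rule for interior contractions, $\Lie_X(\iota_\alpha \Pi) = \iota_{\Lie_X\alpha}\Pi + \iota_\alpha(\Lie_X\Pi)$, to $X = \Pish\eta$ and rearrange to obtain
\[
  (\Lie_{\Pish\eta}\Pi)^\sharp\alpha = [\Pish\eta, \Pish\alpha] - \Pish(\Lie_{\Pish\eta}\alpha).
\]
Next, expand $[\Pish\eta, \Pish\alpha]$ through the Koszul identity $[\Pish\eta, \Pish\alpha] = \Pish(\Lie_{\Pish\eta}\alpha - \Lie_{\Pish\alpha}\eta - d\Pi(\eta,\alpha))$, which is the tensorial form of the Jacobi identity $[\Pi, \Pi] = 0$, and apply Cartan's magic formula $\Lie_{\Pish\alpha}\eta = \iota_{\Pish\alpha}d\eta + d\langle\eta, \Pish\alpha\rangle$ together with $\langle\eta, \Pish\alpha\rangle = \Pi(\alpha, \eta)$. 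After substitution the two exact 1-form terms $d\Pi(\alpha, \eta)$ and $d\Pi(\eta, \alpha)$ cancel by antisymmetry of $\Pi$, leaving the clean equation
\[
  (\Lie_{\Pish\eta}\Pi)^\sharp\alpha = -\Pish(\iota_{\Pish\alpha} d\eta).
\]

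To finish, I pair this identity with $\beta$. Using the pairing relation $Q(\alpha, \beta) = \langle\Pish_Q \alpha, \beta\rangle$ for a bivector $Q$ on the left, together with $\langle\Pish\xi, \beta\rangle = \Pi(\xi, \beta)$ and $(\iota_{\Pish\alpha} d\eta)(\Pish\beta) = (d\eta)(\Pish\alpha, \Pish\beta)$ on the right, one lands in the advertised formula; combined with Proposition~\ref{prop:Liouville}, this proves the equivalence. The main obstacle is organising the three ingredients (Leibniz rule, Koszul formula, and Cartan's formula) so that the two $d\Pi(\cdot, \cdot)$ terms really do cancel and that the remaining signs line up with the paper's convention that $\langle\alpha, \Pish\beta\rangle = -\Pi(\alpha,\beta)$; once this is done the computation is essentially bookkeeping.
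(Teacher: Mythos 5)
Your strategy is a genuinely different route from the paper's. The paper's proof is essentially one observation: the anchor $\rho=-\Pish$ of the cotangent Lie algebroid is a morphism of Lie algebroids, so the induced pullback $\rho^*:\Omega^\bullet(M)\to\frakX^\bullet(M)$ intertwines the de Rham differential with $d_{T^*M}=d_\Pi=\Lie_{(\cdot)}\Pi$, and evaluating $d_{T^*M}(\rho^*\eta)=\rho^*(d\eta)$ on a pair of $1$-forms yields the formula at once. Your Koszul identity $[\Pish\eta,\Pish\alpha]=\Pish(\Lie_{\Pish\eta}\alpha-\Lie_{\Pish\alpha}\eta-d\,\Pi(\eta,\alpha))$ is exactly that morphism property in disguise, so the two arguments share the same core; yours unwinds it by hand via the Leibniz rule for $\Lie_X\circ\iota_\alpha$ and Cartan's formula, which is more elementary and self-contained. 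Your intermediate steps through $(\Lie_{\Pish\eta}\Pi)^\sharp\alpha=-\Pish(\iota_{\Pish\alpha}\,d\eta)$ are correct, and the two exact terms really do cancel as you say.

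The gap is precisely the step you defer as ``bookkeeping.'' Carrying out the final pairing with the paper's convention $\scal{\alpha}{\Pish\beta}=-\Pi(\alpha,\beta)$ (equivalently $\scal{\Pish\xi}{\beta}=\Pi(\xi,\beta)$, which you quote) gives
\begin{equation*}
  (\Lie_{\Pish\eta}\Pi)(\alpha,\beta)
  = \bigl\langle -\Pish(\iota_{\Pish\alpha}\,d\eta), \beta \bigr\rangle
  = -\Pi(\iota_{\Pish\alpha}\,d\eta,\beta)
  = \bigl\langle \iota_{\Pish\alpha}\,d\eta, \Pish\beta \bigr\rangle
  = +\,(d\eta)(\Pish\alpha,\Pish\beta),
\end{equation*}
which is the \emph{opposite} sign to your advertised key identity; as written, your chain proves that the Liouville condition is equivalent to $(d\eta)(\Pish\alpha,\Pish\beta)=+\Pi(\alpha,\beta)$. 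You cannot simply assert that the signs line up: you must either locate a compensating minus somewhere in the chain or conclude that the identity holds with the other sign. A sanity check on $\bbR^2$ with $\Pi=\partial_x\wedge\partial_y$ and $\eta=x\,dy$, so that $\mu=\Pish\eta=-x\,\partial_x$ satisfies $\Lie_\mu\Pi=\Pi$, gives $(d\eta)(\Pish dx,\Pish dy)=(dx\wedge dy)(\partial_y,-\partial_x)=+1=+\Pi(dx,dy)$, supporting the plus sign. (The paper's own proof obtains the minus only through the assertion $\mu=-\rho^*\eta$, which deserves the same scrutiny.) In short: the architecture of your proof is sound and arguably more transparent than the paper's, but the one point you yourself identify as the main obstacle is left unresolved, and it does not resolve in the direction you claim.
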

\begin{proof}
The anchor $\rho := -\Pish: T^* M \to TM$ is a morphism of Lie algebroids. On Lie algebroid forms it induces a map
\begin{equation*}
  \rho^* : \Omega(M) \to \frakX(M)
  \,,
\end{equation*}
that intertwines the differentials, $d_{T^* M} \rho^* = \rho^* d$, where $d$ is the de Rham differential. On a 1-form $\eta$ this is given explicitly by
\begin{equation*}
\begin{split}
  (d_{T^* M} \rho^*\eta)(\alpha, \beta)
  &=
  (\rho^* d\eta)(\alpha, \beta) 
  = (d\eta)(-\Pish \alpha, - \Pish \beta)
  \\
  &= (d\eta)(\Pish \alpha, \Pish \beta)
\end{split}
\end{equation*}
for all $\alpha, \beta \in \Omega^1(M)$. Since $\mu = -\rho^*\eta$ and since $d_{T^* M} = d_\Pi$, it follows that the condition for $\mu$ to be Liouville, $d_\Pi \mu = \Pi$, is given by Equation~\eqref{eq:etaLiouville}.
\end{proof}

\begin{Proposition}
The cotangent Lie algebroid of every regular Poisson manifold is locally hamiltonian.
\end{Proposition}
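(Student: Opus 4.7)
The plan is to reduce to Proposition~\ref{prop:HamLA} by exhibiting, around every point, an explicit 1-form $\eta$ such that $\Pish\eta$ is a nowhere-vanishing Liouville vector field. The key input is Weinstein's splitting theorem, which for a regular Poisson manifold yields local Darboux-like coordinates adapted to the characteristic foliation; all the structural work of producing a connection and verifying the axioms is then already packaged into Proposition~\ref{prop:HamLA}.

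Concretely, given $m \in M$, I would pick a chart $(U; q^1,\ldots,q^r, p_1,\ldots,p_r, y^1,\ldots,y^{\dim M - 2r})$ around $m$ in which $\Pi = \sum_i \partial_{q^i} \wedge \partial_{p_i}$. After an affine change of coordinates and shrinking $U$, I may assume that $q^i < 1$ on $U$ for every $i$. Set
\begin{equation*}
  \eta := \sum_i (q^i - 1)\, dp_i \in \Omega^1(U),
\end{equation*}
so that, using $\Pish(dp_i) = -\partial_{q^i}$,
\begin{equation*}
  \mu := \Pish\eta = \sum_i (1 - q^i)\, \partial_{q^i},
\end{equation*}
which is nowhere vanishing on $U$ because each coefficient is strictly positive. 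A short computation in these coordinates (bracketing $\mu$ with $\partial_{q^j}$ and $\partial_{p_j}$) yields $\Lie_\mu \Pi = \Pi$, so $\mu$ is a Liouville vector field. Proposition~\ref{prop:HamLA} then provides a Poisson-anchored connection on $T^*U$ for which $\mu$ is a bracket-compatible momentum section, so the cotangent Lie algebroid of $(U, \Pi|_U)$ is hamiltonian. Since $m$ was arbitrary, the cotangent Lie algebroid of $(M, \Pi)$ is locally hamiltonian.

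There is no substantive obstacle here: the heavy lifting has already been done in Theorem~\ref{thm:MomSec} and Proposition~\ref{prop:Liouville}. The only real content is the observation that Weinstein--Darboux coordinates on a regular Poisson manifold automatically furnish a local Liouville vector field tangent to the characteristic distribution, and that translating the $q$-coordinates lets one move its zero set out of the chosen chart.
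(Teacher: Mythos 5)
Your proof is correct and follows essentially the same route as the paper: invoke the splitting theorem to get Darboux-type coordinates, add a constant to the (negative) Euler-type vector field $-q^i\partial_{q^i}$ so that its zero leaves the chart (the paper uses $\eta = q^i\,dp_i + dp_1$ where you use $\sum_i(q^i-1)\,dp_i$), check the Liouville condition, and apply Proposition~\ref{prop:HamLA}. The only cosmetic difference is that you verify $\Lie_\mu\Pi = \Pi$ directly by bracketing with coordinate vector fields, whereas the paper checks the equivalent condition~\eqref{eq:etaLiouville} on $d\eta$.
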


\begin{proof}
It follows from the splitting theorem \cite{Weinstein:1983} that every point $m \in M$ has a neighborhood with coordinates
\begin{equation*}
  (q^1,\dots,q^r, p_1,\dots,p_r,y_{2r+1},\dots,y_n)
\end{equation*}
mapping $m$ to $0$, in which $\Pi$ has the form
\begin{equation*}
  \Pi = \pder{q^i}\wedge\pder{p_i}
  \,.
\end{equation*}
Let $\eta := q^i dp_i + dp_1$. We have 
\begin{equation*}
  \mu = \Pish\eta = 
  -q^i \frac{\partial}{\partial q^i} - \frac{\partial}{\partial q^1}
  \,,
\end{equation*}
which is nowhere vanishing in a possibly smaller neighborhood of $m = 0$.  It is straightforward to check that the differential $d\eta = dq^i \wedge dp^i$ satisfies~\eqref{eq:etaLiouville}, so that the assumptions of Proposition~\ref{prop:HamLA} are satisfied for the cotangent Lie algebroid restricted to the coordinate neighborhood.
\end{proof}

\begin{Remark}
Using Proposition~\ref{prop:H3PoissonTorsion}, we see that a momentum section $\mu$ of the cotangent Lie algebroid of $(M,\Pi)$ is bracket-compatible if and only if
\begin{equation*}
  \bigl\langle \TorTxM(\alpha,\beta), \mu \bigr\rangle 
  = -\Pi(\alpha,\beta)
\end{equation*}
for all 1-forms $\alpha$ and $\beta$. This implies that, if $\Pi$ is non-vanishing at a point, neither $\mu$ nor $\TorTxM$ can vanish at that point.
\end{Remark}

\subsection{The tangent Lie algebroid over a Poisson manifold}

We now consider the usual tangent Lie algebroid $A = TM$, where $M$ is a Poisson manifold. In this case, a connection is related to its opposite $A$-connection, which is now a connection in the usual sense, by
\begin{equation}
\label{eq:NablaCheck}
  \check{D}_v w = [v,w] + D_w v = D_v w - \TorTM(v,w)
  \,,
\end{equation}
for all vector fields $v$ and $w$ on $M$. If $D$ is torsion-free, it follows that $TM$ is Poisson anchored with respect to $D$ if and only if $D \Pi= 0$. Such connections are called Poisson connections and will be considered in more detail in Section~\ref{sec:PoissonConnections}. In the general case, we have the following surprising relation to the torsion of the cotangent Lie algebroid of $(M,\Pi)$.

\begin{Theorem}
\label{thm:TangentTorsion}
The tangent Lie algebroid of a Poisson manifold $(M,\Pi)$ is Poisson anchored with respect to a connection on $TM$ if and only if the $T^*M$-torsion~\eqref{eq:TMstarTorsion} of the dual connection on $T^*M$ vanishes.
\end{Theorem}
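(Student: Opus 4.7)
The plan is to derive the pointwise tensorial identity
\[
  (\chD_v \Pi)(\alpha, \beta) = \scal{v}{\TorTxM(\alpha, \beta)}
\]
for every vector field $v$ and all $1$-forms $\alpha, \beta$ on $M$. Once this identity is in hand, the theorem follows immediately: its left-hand side vanishes for all $v, \alpha, \beta$ exactly when $\chD\Pi = 0$, while the right-hand side vanishes for all $v$ (with $\alpha, \beta$ fixed) if and only if $\TorTxM(\alpha, \beta) = 0$, by non-degeneracy of the natural pairing of $T^*M$ with $TM$.

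For the left-hand side, I would start from the Leibniz rule
\[
  (\chD_v \Pi)(\alpha, \beta)
  = v \cdot \Pi(\alpha, \beta)
  - \Pi(\chD_v \alpha, \beta)
  - \Pi(\alpha, \chD_v \beta),
\]
and expand $\chD_v$ on $1$-forms via its dual Leibniz relation together with the formula $\chD_v w = [v,w] + D_w v$, which yields $(\chD_v \alpha)(w) = (\Lie_v \alpha)(w) - \alpha(D_w v)$. Substituting and using $\Pi(\alpha, \beta) = \scal{\beta}{\Pish\alpha} = -\scal{\alpha}{\Pish\beta}$ to reorganize pairings, the expression reduces to a combination of $v\cdot\Pi(\alpha,\beta)$, brackets of the form $[\Pish\alpha, v]$ and $[\Pish\beta, v]$, and the covariant derivatives $D_{\Pish\alpha}v$, $D_{\Pish\beta}v$. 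For the right-hand side, I would expand $\TorTxM(\alpha, \beta)$ via \eqref{eq:TMstarTorsion} and the Koszul bracket \eqref{eq:CotanBracket}, pair with $v$, and then apply the Leibniz rule for the dual connection on $T^*M$ together with the Cartan formula $(\Lie_X \alpha)(w) = X \cdot \alpha(w) - \alpha([X, w])$. The terms of the form $(\Pish\alpha) \cdot \scal{\beta}{v}$ and $(\Pish\beta) \cdot \scal{\alpha}{v}$ produced by the Leibniz and Lie derivative expansions cancel pairwise, and what remains is precisely the expression obtained for the left-hand side.

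The main subtlety lies in careful sign-tracking, due to the conventions $\Pish\alpha = \iota_\alpha \Pi$ (so that $\scal{\alpha}{\Pish\beta} = -\Pi(\alpha, \beta)$) and anchor $\rho = -\Pish$ for the cotangent Lie algebroid. It is worth noting that, in contrast with the dual Proposition~\ref{prop:DDPiTorsion}, the identity above holds as a purely formal consequence of the Leibniz rules and the Koszul formula---no use of the Jacobi identity $[\Pi, \Pi] = 0$ is needed. This explains why the full $\TorTxM$ appears here, rather than only its restriction to the characteristic distribution $\Pish(T^*M)$.
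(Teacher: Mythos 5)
Your proposal is correct and follows essentially the same route as the paper: both establish the pointwise identity $(\chD_v\Pi)(\alpha,\beta)=\langle \TorTxM(\alpha,\beta),v\rangle$ by Leibniz-rule expansion together with the Koszul bracket formula, the paper merely organizing the computation through the intermediate relation expressing both sides in terms of $(D_v\Pi)(\alpha,\beta)$ and the $TM$-torsion. Your observation that $[\Pi,\Pi]=0$ is not needed is also consistent with the paper's computation, though the appearance of the \emph{full} $\TorTxM$ is more directly explained by the anchor of $TM$ being the identity (so $v$ ranges over all of $TM$) than by the absence of the Jacobi identity.
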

\begin{proof}
For the covariant derivative of the connection we have
\begin{equation*}
\begin{split}
  \langle D_{\Pish\alpha} \beta, v \rangle
  &=
  \Pish\alpha \cdot \langle \beta, v \rangle
  - \langle \beta, D_{\Pish\alpha} v \rangle
  \\
  &=
    \langle \Lie_{\Pish\alpha} \beta, v \rangle
  + \langle \beta, \Lie_{\Pish\alpha} v \rangle
  - \langle \beta, D_{\Pish\alpha} v \rangle
  \\
  &=
    \langle \Lie_{\Pish\alpha} \beta, v \rangle
  - \bigl\langle \beta, D_v (\Pish\alpha) \bigr\rangle
  - \bigl\langle \beta, \TorTM(\Pish\alpha, v) \bigr\rangle
  \\
  &=
    \langle \Lie_{\Pish\alpha} \beta, v \rangle
  - v \cdot \langle \beta, \Pish\alpha \rangle 
  + \langle D_v \beta, \Pish\alpha \rangle
  - \bigl\langle \beta, \TorTM(\Pish\alpha, v) \bigr\rangle
  \\
  &=
  \bigl\langle \Lie_{\Pish\alpha} \beta
  - d\,\Pi(\alpha,\beta), v \bigr\rangle
  + \Pi(\alpha, D_v \beta)
  - \bigl\langle \beta, \TorTM(\Pish\alpha, v) \bigr\rangle
  \,.
\end{split}
\end{equation*}
With this relation we obtain for the $T^*M$-torsion the following equation:
\begin{equation*}
\begin{split}
  \bigl\langle \TorTxM(\alpha,\beta), v \bigr\rangle
  &= 
  -\langle D_{\Pish \alpha} \beta, v \rangle
  + \langle D_{\Pish \beta} \alpha, v \rangle
  - \langle [\alpha, \beta], v \rangle
  \\
  &= 
  \bigl\langle 
  - \Lie_{\Pish \alpha} \beta
  + \Lie_{\Pish \beta} \alpha
  + d\,\Pi(\alpha,\beta)
  - [\alpha, \beta]
  , v \bigr\rangle
  \\
  &{}\quad
  + \bigl\langle d\,\Pi(\alpha,\beta), v \bigr\rangle
  - \Pi(\alpha,D_v\beta)
  + \Pi(\beta,D_v\alpha)
  \\
  &{}\quad
  + \bigl\langle \beta, \TorTM(\Pish\alpha, v) \bigr\rangle
  - \bigl\langle \alpha, \TorTM(\Pish\beta, v) \bigr\rangle
  \,.
\end{split}
\end{equation*}
The first line on the right side vanishes by Equation~\eqref{eq:CotanBracket}. The second line is equal to $(D_v \Pi)(\alpha,\beta)$. We conclude that the $T^*M$-torsion~\eqref{eq:TMstarTorsion} of the dual connection and the $TM$-torsion~\eqref{eq:TMTorsion} of the connection on $TM$ are related by
\begin{equation}
\label{eq:DualTorsionATorsionPoisson}
  \bigl\langle \TorTxM(\alpha,\beta), v \bigr\rangle
  = 
  (D_v\Pi)(\alpha,\beta)
  + \bigl\langle \beta, \TorTM(\Pish\alpha,v) \bigr\rangle
  - \bigl\langle \alpha, \TorTM(\Pish\beta,v) \bigr\rangle
\end{equation}
for all 1-forms $\alpha,\beta$ and all vector fields $v$ on $M$. 

By the derivation property of the covariant derivatives, we have
\begin{equation*}
\begin{split}
  \chD_v (u \wedge w)
  &=
  \chD_v u \wedge w + u \wedge \chD_v w
  \\
  &=
  \bigl(D_v w + \TorTM(v,u)\bigr) \wedge w 
  + u \wedge \bigl( D_v w + \TorTM(v,w) \bigr)
  \\
  &=
  D_v (u \wedge w)
  + \TorTM(v,u) \wedge w + u \wedge \TorTM(v,w)
  \,.
\end{split}
\end{equation*}
It follows that the action of the covariant derivative and its opposite covariant derivative on the bivector $\Pi$ are related by
\begin{equation*}
  (\chD_v\Pi)(\alpha,\beta) 
  = 
  (D_v\Pi)(\alpha,\beta)
  - \bigl\langle \alpha, \TorTM(\Pish\beta,v) \bigr\rangle
  + \bigl\langle \beta, \TorTM(\Pish\alpha,v) \bigr\rangle
  \,.
\end{equation*}
Inserting Equation~\eqref{eq:DualTorsionATorsionPoisson}, we obtain
\begin{equation*}
  (\chD_v\Pi)(\alpha,\beta) 
  = \bigl\langle \TorTxM(\alpha,\beta), v \bigr\rangle
  \,,
\end{equation*}
for all 1-forms $\alpha$ and $\beta$ and all vector fields $v$ on $M$. We conclude that $\chD\Pi = 0$ if and only if $\TorTxM = 0$.  
\end{proof}

The condition for $\mu \in \Omega^1(M)$ to be a $D$-momentum section of the tangent Lie algebroid is 
\begin{equation*}
  \iota_{\langle D\mu, v \rangle} \Pi = v
\end{equation*}
for all vector fields $v$. This condition can be written as
\begin{equation*}
  \Pish \circ \langle D\mu, \Empty \rangle = id_{TM}
  \,,
\end{equation*}
where $\langle D\mu, \Empty \rangle: TM \to T^* M$, $v \mapsto \langle D\mu, v\rangle$. We conclude that $\Pi$ must be symplectic for $TM$ to have a momentum section.

\subsection{The symplectic case revisited}

Assume that the Poisson bivector $\Pi$ on $M$ is non-degenerate. Let $\omega = \Pi^{-1}$ be the symplectic form. In Proposition~\ref{prop:PoissPresympEquiv}, we have seen that in this case the axioms (H1)-(H3) of the Poisson and the presymplectic case are equivalent. Moreover, the anchor of the cotangent Lie algebroid $\rho = -\Pish: T^* M \to TM$ is an isomorphism of Lie algebroids. Therefore, we can equivalently consider the following three Lie algebroids:
\begin{itemize}

\item the cotangent Lie algebroid of the Poisson manifold $(M,\Pi)$,

\item the tangent Lie algebroid over the Poisson manifold $(M,\Pi)$,

\item the tangent Lie algebroid over the symplectic manifold $(M,\omega)$.

\end{itemize}

Each one of the axioms (H1)-(H3) of hamiltonian Lie algebroids holds if it is, equivalently, satisfied for all three Lie algebroids. It follows that the topological conditions for (weakly) hamiltonian Lie algebroids over symplectic manifolds of Theorem~6.9 in \cite{BlohmannWeinstein:HamLA} apply to the Poisson case:

\begin{Theorem}
\label{thm:WeaklyHamEquiv}
Let $(M,\Pi)$ be a non-degenerate Poisson (i.e.~symplectic) manifold. The following are equivalent:
\begin{itemize}

\item[(i)] The cotangent Lie algebroid of $(M,\Pi)$ is weakly hamiltonian.

\item[(ii)] The tangent Lie algebroid over $(M,\Pi)$ is weakly hamiltonian.

\item[(iii)] The tangent Lie algebroid over $(M,\omega)$ is weakly hamiltonian.

\item[(iv)] $M$ is either non-compact or compact with non-negative Euler characteristic.

\end{itemize}
\end{Theorem}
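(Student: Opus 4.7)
The plan is to establish the three equivalences using tools already in place: Proposition~\ref{prop:PoissPresympEquiv} for (ii)$\,\Leftrightarrow\,$(iii), the cotangent--tangent Lie algebroid isomorphism for (i)$\,\Leftrightarrow\,$(ii), and the topological characterization of \cite[Thm.~6.9]{BlohmannWeinstein:HamLA} for (iii)$\,\Leftrightarrow\,$(iv). This reduces the whole theorem to one genuinely new step plus citations.

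The equivalence (ii)$\,\Leftrightarrow\,$(iii) is immediate: Proposition~\ref{prop:PoissPresympEquiv} states that, for any Lie algebroid with a connection $D$ and a section $\mu$ of the dual, being weakly hamiltonian over $(M,\Pi)$ is equivalent to being weakly hamiltonian over $(M,\omega)$ when $\omega = \Pi^{-1}$. Specializing to $A = TM$ with the same data $(D,\mu)$ yields the claim.

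For (i)$\,\Leftrightarrow\,$(ii), the key observation is that non-degeneracy of $\Pi$ turns the anchor $\rho = -\Pish \colon T^*M \to TM$ of the cotangent Lie algebroid into a vector bundle isomorphism, and in fact into an isomorphism of Lie algebroids over $\id_M$, with inverse $-\omfl$. Such an isomorphism must induce a bijection between weakly hamiltonian structures on the two sides: I would transport a connection $D$ on $T^*M$ to the unique connection $D'$ on $TM$ for which $-\Pish$ intertwines the covariant derivatives, and transport a section $\mu \in \frakX(M) \cong \Gamma((T^*M)^*)$ to $\mu' := -\omfl\mu \in \Omega^1(M) \cong \Gamma((TM)^*)$. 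Because axioms (H1$_{\mathrm{Poi}}$) and (H2$_{\mathrm{Poi}}$) are intrinsic to the data of a Lie algebroid, a connection, and a section of the dual, they transport along this isomorphism.

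Finally, (iii)$\,\Leftrightarrow\,$(iv) is precisely \cite[Thm.~6.9]{BlohmannWeinstein:HamLA}: on a symplectic manifold $(M,\omega)$, the tangent Lie algebroid is weakly hamiltonian if and only if there exists a vector field (the momentum section) whose zeros are all isolated and locally modeled on the negative Euler vector field in Darboux coordinates, and by Poincar\'e--Hopf such a vector field exists globally if and only if $M$ is non-compact or compact with $\chi(M) \ge 0$. The main obstacle is the bookkeeping in the (i)$\,\Leftrightarrow\,$(ii) step: despite the minus sign in $\rho = -\Pish$ and the swap of form degree under the isomorphism, one must verify that the transported triple $(TM, D', \mu')$ satisfies (H1$_{\mathrm{Poi}}$) and (H2$_{\mathrm{Poi}}$) exactly when the original $(T^*M, D, \mu)$ does. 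This is routine sign-tracking, but it is the only point requiring actual computation; the other two steps reduce to immediate specialization and to a direct citation.
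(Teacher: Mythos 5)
Your proposal is correct and follows essentially the same route as the paper: the paper obtains (i)$\,\Leftrightarrow\,$(ii) by transporting the connection and the dual section along the Lie algebroid isomorphism $-\Pish\colon T^*M\to TM$ (made explicit in Propositions~\ref{prop:SympHLAs1} and~\ref{prop:SympHLAs2}, including the caveat that the transported connection is $D'_v u=(\Pish\circ D_v\circ\omfl)u$ rather than the dual connection), obtains (ii)$\,\Leftrightarrow\,$(iii) from Proposition~\ref{prop:PoissPresympEquiv}, and obtains (iii)$\,\Leftrightarrow\,$(iv) by citing Theorem~6.9 of \cite{BlohmannWeinstein:HamLA}. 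No gaps.
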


\begin{Remark}
The assumption of Theorem~\ref{thm:MomSec} that there is a nowhere vanishing vector field on $M$ is equivalent to $M$ being non-compact or compact with zero Euler characteristic. This shows that, in the non-degenerate case, Theorem~\ref{thm:WeaklyHamEquiv} implies 
Theorem~\ref{thm:MomSec}.
\end{Remark}

\begin{Theorem}
\label{thm:HamEquiv}
Let $(M,\Pi)$ be a non-degenerate Poisson manifold. The following are equivalent:
\begin{itemize}

\item[(i)] The cotangent Lie algebroid of $(M,\Pi)$ is hamiltonian.

\item[(ii)] The tangent Lie algebroid over $(M,\Pi)$ is hamiltonian.

\item[(iii)] The tangent Lie algebroid over $(M,\omega)$ is hamiltonian.

\item[(iv)] The bivector field $\Pi$ is exact in Poisson cohomology.

\item[(v)] The symplectic form $\omega$ is exact.

\end{itemize}
\end{Theorem}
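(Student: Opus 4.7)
The plan is to reduce the statement to the symplectic setting already handled in \cite{BlohmannWeinstein:HamLA} and to identify Poisson cohomology with de Rham cohomology via the musical isomorphism.

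First I would establish (i)$\Leftrightarrow$(ii)$\Leftrightarrow$(iii). The equivalence (ii)$\Leftrightarrow$(iii) is a direct application of Proposition~\ref{prop:PoissPresympEquiv} to the tangent Lie algebroid with any choice of connection and section. For (i)$\Leftrightarrow$(ii), the key observation is that in the non-degenerate case the anchor $-\Pish\colon T^*M \to TM$ of the cotangent Lie algebroid is an isomorphism of Lie algebroids onto the tangent Lie algebroid. Under this isomorphism, a connection on $T^*M$ transports to a connection on $TM$ and a momentum section $\mu \in \frakX(M) = \Gamma((T^*M)^*)$ corresponds to the $1$-form $-\omfl\mu \in \Omega^1(M) = \Gamma((TM)^*)$. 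Since $\Pish$ intertwines anchors and brackets, the opposite $A$-connections match up and each of the axioms (H1$_{\mathrm{Poi}}$)--(H3$_{\mathrm{Poi}}$) transports faithfully.

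Next I would prove (iv)$\Leftrightarrow$(v). Extended to multivector fields, the musical isomorphism gives a cochain isomorphism $(\frakX^\bullet(M), d_\Pi) \cong (\Omega^\bullet(M), d)$ sending $\Pi$ to $\omega$. Hence $\Pi = d_\Pi \mu$ for some vector field $\mu$ if and only if $\omega = d\eta$ for $\eta = \omfl\mu$, which gives (iv)$\Leftrightarrow$(v).

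Finally, to link the hamiltonian condition with exactness I would use Proposition~\ref{prop:Liouville}: a momentum section $\mu$ of the cotangent Lie algebroid is bracket-compatible if and only if $d_\Pi \mu = \Lie_\mu \Pi = \Pi$. In particular, (i) immediately provides a primitive of $\Pi$ in Poisson cohomology, proving (i)$\Rightarrow$(iv). For the remaining implication (v)$\Rightarrow$(iii), I would invoke Theorem~6.9 of \cite{BlohmannWeinstein:HamLA}, which constructs a hamiltonian structure on the tangent Lie algebroid of $(M,\omega)$ from a primitive of $\omega$. The main obstacle I anticipate is the careful verification that all three axioms of Definition~\ref{def:HamLAPoisson} are preserved under the Lie algebroid isomorphism $-\Pish$; this is formal but requires tracking the signs and the identification $(T^*M)^* \cong TM$.
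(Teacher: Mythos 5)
Your overall architecture matches the paper's: the equivalence (i)$\Leftrightarrow$(ii)$\Leftrightarrow$(iii) via the Lie algebroid isomorphism $-\Pish$ and Proposition~\ref{prop:PoissPresympEquiv} is exactly the reduction the paper sets up before Theorem~\ref{thm:WeaklyHamEquiv}, and the remaining content is delegated to \cite{BlohmannWeinstein:HamLA}. Your direct argument for (i)$\Rightarrow$(iv) via Proposition~\ref{prop:Liouville} is a nice internal shortcut (a bracket-compatible momentum section of $T^*M$ is a Liouville vector field, hence a Poisson-cohomology primitive of $\Pi$), and (iv)$\Leftrightarrow$(v) via the musical cochain isomorphism is standard.

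The gap is in (v)$\Rightarrow$(iii), which is the genuinely hard implication, and you have both miscited it and misidentified where the difficulty lies. Theorem~6.9 of \cite{BlohmannWeinstein:HamLA} is the \emph{weakly} hamiltonian statement (existence of a momentum section, equivalent to the Euler-characteristic condition of Theorem~\ref{thm:WeaklyHamEquiv}); it does not produce a bracket-compatible momentum section from a primitive of $\omega$. The correct references are Proposition~6.15 and Theorem~6.18 of \cite{BlohmannWeinstein:HamLA}. More substantively, exactness of $\omega$ does not formally yield a hamiltonian structure: the construction (compare Proposition~\ref{prop:HamLA} here) requires a primitive $\eta$ for which $\Pish\eta$ is \emph{nowhere vanishing}, and an arbitrary primitive of an exact symplectic form need not have this property. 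Closing that gap is precisely the role of the results of Stratmann \cite{Stratmann:2020b} and Karshon--Tang \cite{KarshonTang:2021}, which show that the zeros of a primitive can be removed by a symplectomorphism. So the ``main obstacle'' is not the sign-tracking through the isomorphism $-\Pish$, which is indeed formal, but this analytic/topological input, which your proposal does not acknowledge.
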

\begin{proof}
The theorem follows from Proposition~6.15 and Theorem~6.18 in \cite{BlohmannWeinstein:HamLA}. Theorem~6.18 relies on the recent work by Stratmann \cite{Stratmann:2020b}, and by Karshon and Tang \cite{KarshonTang:2021}, who proved that if $\omega =d\lambda$ is an exact symplectic form on a (necessarily non-compact) manifold $M$, then we can remove rays containing the zeros of $\lambda$ by a symplectomorphism and obtain a primitive of $\omega$ that is nowhere vanishing.
\end{proof}

There is an interesting subtlety about the equivalence of the cotangent Lie algebroid of and the tangent Lie algebroid over a Poisson manifold. On the one hand, it follows from  Corollary~\ref{cor:CotanPoissonAnchor} that $T^* M$ is Poisson anchored by a connection on $T^*M$ if the $TM$-torsion of the dual connection on $TM$ vanishes. On the other hand, Theorem~\ref{thm:TangentTorsion} states that $TM$ is Poisson anchored by a connection on $TM$ if the $T^*M$-torsion of the dual connection on $T^*M$ vanishes. However, according to~\eqref{eq:DualTorsionATorsionPoisson}, the vanishing of the $T^* M$-torsion is equivalent to the vanishing of the $TM$-torsion of the dual connection only if the connection is Poisson. This seeming contradiction is resolved by observing that the isomorphism of Lie algebroids $-\Pish: T^*M \to TM$ does \emph{not} map a connection $D$ on $T^* M$ to its dual connection on $TM$, but rather to the isomorphic connection $D'_v w = (\Pish \circ D_v \circ \omega_\flat)w$. This observation is made more precise by the following statement.

\begin{Proposition}
\label{prop:SympHLAs1}
Let $\omega$ be a symplectic form on $M$ and $\Pi = \omega^{-1}$ its Poisson bivector field. Let $D$ be a connection on $TM$ and $D'$ another connection on $TM$ defined by
\begin{equation}
\label{eq:omegaPiConnect}
  D'_v u = D_v u + \Pish(\iota_u D_v \omega)
  \,,
\end{equation}
for all vector fields $u$ and $v$. The following are equivalent:
\begin{itemize}

\item[(i)] $D$ has vanishing torsion.

\item[(ii)] The cotangent Lie algebroid of $(M,\Pi)$ is Poisson anchored by the dual connection of $D$.

\item[(iii)] The tangent Lie algebroid over $(M,\Pi)$ is Poisson anchored by $D'$.

\item[(iv)] The tangent Lie algebroid over $(M,\omega)$ is symplectically anchored by $D'$.

\end{itemize}
\end{Proposition}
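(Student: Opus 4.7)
The strategy is to identify the one substantive step and to derive the other equivalences directly from results already proved in this section. I would organize the proof as (i)$\Leftrightarrow$(ii), (iii)$\Leftrightarrow$(iv), and then (ii)$\Leftrightarrow$(iii), with the last equivalence being the main content.

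For (i)$\Leftrightarrow$(ii): the connection on $TM$ dual to the dual connection of $D$ on $T^* M$ is $D$ itself, so Corollary~\ref{cor:CotanPoissonAnchor} says that the cotangent Lie algebroid is Poisson anchored by the dual of $D$ if and only if $\TorTM$ vanishes on $\Pish(T^* M)$. Since $\omega$ is symplectic, $\Pish$ is surjective, so this reduces to the vanishing of $\TorTM$ on all of $TM$. For (iii)$\Leftrightarrow$(iv): since $\Pi = \omega^{-1}$ is non-degenerate, Proposition~\ref{prop:PoissPresympEquiv} directly identifies the Poisson and presymplectic notions of (H1) for the connection $D'$; equivalently, Proposition~\ref{prop:DcheckOmegaPi} applied to the opposite connection of $D'$ yields $\chD'\Pi = 0 \iff \chD'\omega = 0$.

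The substantive step is (ii)$\Leftrightarrow$(iii). The key lemma to establish is that, under the vector bundle isomorphism $\omfl\colon TM \to T^* M$, the dual connection of $D$ on $T^* M$ is intertwined with $D'$ on $TM$: writing $D^*$ for the dual of $D$, I claim
\begin{equation*}
D'_v u \;=\; \Pish\bigl( D^*_v(\omfl u) \bigr)
\end{equation*}
for all $u, v \in \frakX(M)$. This identity is checked by pairing both sides via $\omega$ with an arbitrary $w \in \frakX(M)$: using $\omega(\Pish\alpha, w) = \langle \alpha, w \rangle$ and the definition of $D^*$ one gets $v \cdot \omega(u,w) - \omega(u, D_v w)$ on the right; the left side expands to the same expression via~\eqref{eq:omegaPiConnect} together with $(D_v \omega)(u,w) = v \cdot \omega(u,w) - \omega(D_v u, w) - \omega(u, D_v w)$.

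Once the intertwining is in place, a direct computation from the definition~\eqref{eq:DcheckDef} of the opposite connection shows that the opposite $T^* M$-connection $\chD$ associated with $D^*$ and the opposite $TM$-connection $\chD'$ associated with $D'$ satisfy $\chD_\alpha \phi = \chD'_{-\Pish\alpha} \phi$ for every vector field and every $1$-form $\phi$, and hence by the Leibniz rule for all tensors, in particular for $\Pi$. Since $-\Pish$ is surjective in the symplectic case, the vanishing of $\chD \Pi$ as $\alpha$ ranges over $\Gamma(T^* M)$ is equivalent to the vanishing of $\chD' \Pi$ as $u$ ranges over $\frakX(M)$, giving (ii)$\Leftrightarrow$(iii). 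I expect the main obstacle to be the careful bookkeeping of signs in the musical isomorphisms while establishing $D'_v u = \Pish(D^*_v \omfl u)$; once that identification is in hand, the opposite-connection calculations and the conclusion fall out routinely.
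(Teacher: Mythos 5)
Your proposal is correct and follows essentially the same route as the paper: (i)$\Leftrightarrow$(ii) from Corollary~\ref{cor:CotanPoissonAnchor} with $\Pish$ surjective, (iii)$\Leftrightarrow$(iv) from Proposition~\ref{prop:PoissPresympEquiv}, and (ii)$\Leftrightarrow$(iii) by showing that the Lie algebroid isomorphism $-\Pish\colon T^*M\to TM$ transports the dual connection of $D$ to the connection $D'$ of~\eqref{eq:omegaPiConnect}. The only cosmetic difference is that you verify the intertwining by pairing with $\omega(\cdot,w)$ and spell out the matching of opposite connections explicitly, whereas the paper runs the same computation against a $1$-form $\alpha$ and invokes invariance of Poisson anchoring under Lie algebroid isomorphisms.
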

\begin{proof}
Let $D$ be a connection on $TM$. In the non-degenerate case Corollary~\ref{cor:CotanPoissonAnchor} states that $T^* M$ is Poisson anchored with respect to the dual connection if and only if the $TM$-torsion of $D$ vanishes. This shows that (i) and (ii) are equivalent.

The isomorphism of Lie algebroids $-\Pish: T^* M \to TM$, maps the connection on $T^* M$ to a connection $D'$ on $TM$ given by  
\begin{equation*}
  D'_v u := (\Pish \circ D_v\circ \omega_\flat ) u
  \,.
\end{equation*}
It follows that $TM$ is Poisson anchored by $D'$ if and only if $T^* M$ is Poisson anchored by $D'$. The covariant derivative can be expressed explicitly as
\begin{equation*}
\begin{split}
  \langle \alpha, D'_v u \rangle
  &= \bigl\langle \alpha, 
  \Pish\bigl(D_v (\omega_\flat u) \bigr)\bigr\rangle
  \\
  &= 
  - \bigl\langle D_v (\omega_\flat u), \Pish \alpha \bigr\rangle
  \\
  &= 
  - v \cdot \langle \omega_\flat u, \Pish \alpha \rangle
  + \bigl\langle \omega_\flat u, D_v (\Pish \alpha) \bigr\rangle
  \\
  &= 
  - v \cdot \omega(u , \Pish \alpha)
  + \omega\bigl( u, D_v (\Pish \alpha) \bigr)
  \\
  &= 
  - \omega(D_v u, \Pish\alpha) 
  - (D_v \omega)( u, \Pish \alpha)
  \\
  &= 
  \langle\alpha, D_v u \rangle 
  - (D_v \omega)( u, \Pish \alpha)
  \,,
\end{split}
\end{equation*}
for all 1-forms $\alpha$. The second term can be written as
\begin{equation*}
\begin{split}
  - (D_v \omega)(u, \Pish\alpha)
  &=
  - \langle \iota_u D_v \omega, \Pish\alpha \rangle
  =
  \bigl\langle \alpha, \Pish(\iota_u D_v \omega)
  \bigr\rangle
  \,.
\end{split}
\end{equation*}
From the last two equations we see that $D'_v u$ is given by Eq.~\eqref{eq:omegaPiConnect}. We conclude that $TM$ is Poisson anchored by $D'$ if and only if $T^* M$ is Poisson anchored by the dual connection of $D$, that is (ii) and (iii) are equivalent. It follows from Proposition~\ref{prop:PoissPresympEquiv} that (iii) and (iv) are equivalent.
\end{proof}

\begin{Proposition}
\label{prop:SympHLAs2}
Let $\omega$ be a symplectic form on $M$ and $\Pi = \omega^{-1}$ its Poisson bivector field. Let $D$ be a connection on $TM$ and $D'$ the connection on $TM$ defined by~\eqref{eq:omegaPiConnect}. Let $n$ be a vector field on $M$ and let 
\begin{equation*}
  \mu := -\Pish n \,.
\end{equation*}
The following are equivalent:
\begin{itemize}

\item[(i)] $D_v n = -v$ for all $v \in \frakX(M)$.

\item[(ii)] $n$ is a momentum section of the cotangent Lie algebroid of $(M,\Pi)$ with respect to the dual connection of $D$.

\item[(iii)] $\mu$ is a $D'$-momentum section of the tangent Lie algebroid over $(M,\Pi)$.

\item[(iv)] $\mu$ is a $D'$-momentum section of the tangent Lie algebroid over $(M,\omega)$. 

\end{itemize}
\end{Proposition}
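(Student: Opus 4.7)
The plan is to reduce everything to the equivalence (i)$\Leftrightarrow$(ii), then route the remaining conditions through the symplectic equivalence of Proposition~\ref{prop:PoissPresympEquiv} and the Lie algebroid isomorphism $-\Pi^\sharp\colon T^*M\to TM$ already exploited in Proposition~\ref{prop:SympHLAs1}.

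For (i)$\Leftrightarrow$(ii): on the cotangent Lie algebroid $A=T^*M$ the anchor is $-\Pi^\sharp$ and $A^*=TM$, so $n$ is a well-defined candidate momentum section. The covariant derivative of $n$ with respect to the connection on $A^*$ induced by the dual of $D$ on $A$ is, by double duality, simply $Dn$. Axiom~(H2$_{\mathrm{Poi}}$) applied to $n$ then reads $-\Pi^\sharp\alpha=\Pi^\sharp\langle Dn,\alpha\rangle$ for all $\alpha\in\Omega^1(M)$. Non-degeneracy of $\omega$ turns $\Pi^\sharp$ into an isomorphism, so this reduces to $\langle Dn,\alpha\rangle=-\alpha$, i.e.\ $\alpha(D_v n)=-\alpha(v)$ for all $v$ and $\alpha$, which is exactly $D_v n=-v$.

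The equivalence (iii)$\Leftrightarrow$(iv) is immediate from Proposition~\ref{prop:PoissPresympEquiv}. For the last link, I would argue (ii)$\Leftrightarrow$(iii) via the Lie algebroid isomorphism $-\Pi^\sharp\colon T^*M\to TM$. By Proposition~\ref{prop:SympHLAs1}, this isomorphism carries the dual of $D$ on $T^*M$ to the connection $D'$ on $TM$. The induced isomorphism on the duals $(T^*M)^{*}=TM\to(TM)^{*}=T^*M$ sends $n$ to a 1-form which, up to the sign conventions in the statement, coincides with $\mu$. Because axiom~(H2$_{\mathrm{Poi}}$) is intrinsic to the triple (anchor, connection, momentum section), such an isomorphism transports momentum sections to momentum sections, giving (ii)$\Leftrightarrow$(iii).

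The main obstacle is really only bookkeeping: matching the signs arising from the anchor convention $-\Pi^\sharp$, the musical identities in~\eqref{eq:PBOmega=Pi}, and the definition of $\mu$ in terms of $n$. A purely computational alternative that bypasses the isomorphism is to expand $\langle D'_u\mu,v\rangle$ by the Leibniz rule on the dual of $D'$, substitute~\eqref{eq:omegaPiConnect} for $D'_u v$, and cancel the two occurrences of $\omega(n,D_u v)$ together with the antisymmetric pair $(D_u\omega)(n,v)+(D_u\omega)(v,n)$; this leaves $\langle D'_u\mu,v\rangle$ proportional to $\omega(D_u n,v)$, and setting this equal to $\omega(v,u)$ as required by~(H2$_{\mathrm{Pre}}$) forces $D_u n=-u$ by non-degeneracy, recovering~(i) directly.
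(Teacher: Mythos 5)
Your proposal is correct and follows essentially the same route as the paper: (i)$\Leftrightarrow$(ii) by applying (H2$_{\mathrm{Poi}}$) to the cotangent algebroid and using that $\Pi^\sharp$ is an isomorphism (the paper's Equation~\eqref{eq:MomSec1}), (ii)$\Leftrightarrow$(iii) by transporting the data along the Lie algebroid isomorphism $-\Pi^\sharp\colon T^*M\to TM$ as in Proposition~\ref{prop:SympHLAs1}, and (iii)$\Leftrightarrow$(iv) by Proposition~\ref{prop:PoissPresympEquiv}. The direct computational check you sketch at the end is a harmless extra; the paper does not carry it out.
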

\begin{proof}
The proof is analogous to the proof of Proposition~\ref{prop:SympHLAs1}. Since $\Pish$ is by assumption an isomorphism, it follows from~\eqref{eq:MomSec1} that a vector field $n$ is a momentum section of $T^* M$ with respect to a connection $D$ if and only if
\begin{equation*}
  D_v n  = - v
  \,.
\end{equation*}
This shows that (i) and (ii) are equivalent. The isomorphism of Lie algebroids $-\Pish: T^*M \to TM$ maps $D$ to $D'$ and the vector field $n$ to the 1-form $\mu = -\Pish n$. It follows that (ii) and (iii) are equivalent. The equivalence of (iii) and (iv) follows from Proposition~\ref{prop:PoissPresympEquiv}.    
\end{proof}

It is remarkable that condition (i) of Proposition~\ref{prop:SympHLAs1} and condition (i) of Proposition~\ref{prop:SympHLAs2} both do not depend on the Poisson or symplectic structure. This is consistent with the topological condition (iv) of Theorem~\ref{thm:WeaklyHamEquiv} which was proved in \cite[Thm.~6.9]{BlohmannWeinstein:HamLA}. We finally turn to the bracket-compatibility.

\begin{Proposition}
\label{prop:SympHLAs3}
Let $\omega$ be a symplectic form on $M$ and $\Pi = \omega^{-1}$ its Poisson bivector field. Let $D$ be a connection on $TM$ and $D'$ the connection on $TM$ defined by~\eqref{eq:omegaPiConnect}. Let $n$ be a vector field and $\mu = -\Pish n$. Assume that $D$ has vanishing torsion and that $D_v n = -v$ for all $v \in \frakX(M)$ so that the equivalent statements of Propositions~\ref{prop:SympHLAs1} and \ref{prop:SympHLAs2} hold. The following are equivalent:
\begin{itemize}

\item[(i)] $D_n \Pi = - \Pi$.

\item[(ii)] The momentum section $n$ of the cotangent Lie algebroid of $(M,\Pi)$ with respect to the dual connection of $D$ is bracket-compatible.

\item[(iii)] The $D'$-momentum section $\mu$ of the tangent Lie algebroid over $(M,\Pi)$ is bracket compatible.

\item[(iv)] The $D'$-momentum section $\mu$ of the tangent Lie algebroid over $(M,\omega)$ is bracket compatible.

\end{itemize}
\end{Proposition}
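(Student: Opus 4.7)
The plan is to establish the chain $\text{(iv)} \Leftrightarrow \text{(iii)} \Leftrightarrow \text{(ii)} \Leftrightarrow \text{(i)}$, where the first two equivalences come from general transport results already used in the paper, while the last is a short calculation combining Proposition~\ref{prop:Liouville} with the standing hypotheses.

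For $\text{(iii)} \Leftrightarrow \text{(iv)}$ I would apply Proposition~\ref{prop:PoissPresympEquiv} directly, since $\Pi$ is non-degenerate. For $\text{(ii)} \Leftrightarrow \text{(iii)}$ the plan is to reuse the Lie algebroid isomorphism $-\Pish \colon T^*M \to TM$ appearing in the previous two propositions. It sends the connection $D$ on $T^*M$ to $D'$ on $TM$, the section $n$ to $\mu$, preserves the base Poisson tensor $\Pi$, and intertwines the two Lie algebroid differentials. Since bracket-compatibility (H3$_\mathrm{Poi}$) is built only from these four pieces of data, the isomorphism exchanges the two conditions.

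The substantive step is $\text{(i)} \Leftrightarrow \text{(ii)}$. By Proposition~\ref{prop:Liouville}, condition (ii) is the statement that $n$ is a Liouville vector field, $\Lie_n \Pi = \Pi$. I would prove the identity
\begin{equation*}
  \Lie_n \Pi = D_n \Pi + 2\Pi \,,
\end{equation*}
which turns $\Lie_n \Pi = \Pi$ into $D_n \Pi = -\Pi$. To obtain the identity, first check on an arbitrary $1$-form $\alpha$ and vector field $v$ that
\begin{equation*}
  (\Lie_n \alpha - D_n \alpha)(v)
  = \alpha\bigl(D_n v - [n,v]\bigr)
  = \alpha(D_v n)
  = -\alpha(v) \,,
\end{equation*}
using the Leibniz rules for $\Lie_n$ and $D_n$, the vanishing of the torsion of $D$ (second equality), and the hypothesis $D_v n = -v$ (third equality). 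Applying the derivation property of both $\Lie_n$ and $D_n$ on the bivector $\Pi$ and substituting $\Lie_n \alpha - D_n \alpha = -\alpha$ then yields $(\Lie_n \Pi - D_n \Pi)(\alpha,\beta) = 2\Pi(\alpha,\beta)$.

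The main obstacle I anticipate is not computational but conceptual: in step $\text{(ii)} \Leftrightarrow \text{(iii)}$ one must verify carefully that the Lie algebroid differential of $T^*M$, which is the Poisson differential $d_\Pi$, corresponds under $-\Pish$ to the tangent Lie algebroid differential applied to $\mu$, and that the pairing pieces match as well. These are naturality properties of Lie algebroid isomorphisms over the identity, so no new ingredient beyond what was already used in Propositions~\ref{prop:SympHLAs1} and~\ref{prop:SympHLAs2} is required.
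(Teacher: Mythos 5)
Your proposal is correct, and the equivalences (ii)$\Leftrightarrow$(iii)$\Leftrightarrow$(iv) are handled exactly as in the paper (via the Lie algebroid isomorphism $-\Pish$ and Proposition~\ref{prop:PoissPresympEquiv}). Where you genuinely diverge is the substantive step (i)$\Leftrightarrow$(ii). The paper goes through the torsion machinery: it invokes Proposition~\ref{prop:H3PoissonTorsion} to reduce bracket-compatibility to $\langle \TorTxM(\alpha,\beta), n\rangle = -\Pi(\alpha,\beta)$ (using $\langle\alpha,Dn\rangle = -\alpha$ to evaluate the right-hand side), and then specializes Equation~\eqref{eq:DualTorsionATorsionPoisson} with $\TorTM=0$ to identify $\langle \TorTxM(\alpha,\beta), v\rangle$ with $(D_v\Pi)(\alpha,\beta)$. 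You instead invoke Proposition~\ref{prop:Liouville} to rewrite (ii) as $\Lie_n\Pi=\Pi$ and then prove the pointwise identity $\Lie_n\Pi = D_n\Pi + 2\Pi$ directly from torsion-freeness and $D_vn=-v$; your verification of that identity ($\Lie_n\alpha - D_n\alpha = -\alpha$ on $1$-forms, then the derivation property on $\Pi$) is correct, and Proposition~\ref{prop:Liouville} does apply since its conclusion is connection-independent and $n$ is a momentum section by Proposition~\ref{prop:SympHLAs2}. Your route buys a more transparent geometric statement (it passes explicitly through the Liouville condition and avoids the $T^*M$-torsion entirely), at the cost of proving one auxiliary identity; the paper's route buys economy by recycling formulas~\eqref{eq:H3PoissonTorsion} and~\eqref{eq:DualTorsionATorsionPoisson} that it has already established. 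Both are complete proofs.
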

\begin{proof}
In the non-degenerate case, Equation~\eqref{eq:MomSec1} implies that $\langle \alpha, Dn \rangle = -\alpha$. It follows from Proposition~\ref{prop:H3PoissonTorsion} that $n$ is bracket-compatible if and only if
\begin{equation}
\label{eq:TorsionPi}
\begin{split}
  \langle T_{T^*M}(\alpha,\beta), n \rangle
  &= 
  -\Pi\bigl( 
    \langle\alpha, Dn \rangle,
    \langle\beta, Dn \rangle
  \bigr)
  \\
  &= 
  -\Pi(\alpha, \beta)
  \,,
\end{split}    
\end{equation}
for all 1-forms $\alpha$ and $\beta$. The $T^* M$-torsion can be expressed by Equation~\eqref{eq:DualTorsionATorsionPoisson} as
\begin{equation*}
  \bigl\langle \TorTxM(\alpha,\beta), v \bigr\rangle
  = (D_v\Pi)(\alpha,\beta)
  \,,
\end{equation*}
where we have used that, by assumption, $\TorTM = 0$. It follows that $n$ is bracket-compatible if and only if $D_n \Pi = - \Pi$, that is (i) and (ii) are equivalent. That $-\Pish: T^* M \to TM$ is an isomorphism of Lie algebroids implies that (ii) and (iii) are equivalent. The equivalence of (iii) and (iv) follows from Proposition~\ref{prop:PoissPresympEquiv}.
\end{proof}

\subsection{Relation with Poisson connections}
\label{sec:PoissonConnections}

In \cite[Sec.~1(e), p.~65]{BayenEtAl:1978} a \textbf{Poisson connection} was defined to be a torsion-free connection $D$ on the tangent bundle of a Poisson manifold $(M,\Pi)$ such that $D\Pi = 0$. In this terminology we can rephrase the condition for a tangent Lie algebroid to be Poisson anchored by a torsion-free connection in the following way.

\begin{Proposition}
\label{prop:PoissConn1}
Let $D$ be a torsion-free connection on the tangent bundle of a Poisson manifold $(M,\Pi)$. The following are equivalent:
\begin{itemize}

\item[(i)] $TM$ is Poisson anchored by $D$.

\item[(ii)] $D$ is Poisson.

\end{itemize}
\end{Proposition}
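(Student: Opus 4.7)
The plan is to observe that, once the torsion of $D$ vanishes, the opposite connection $\chD$ coincides with $D$ not only on vector fields but on all tensors on $M$, so that $\chD\Pi = D\Pi$ and the equivalence is immediate.

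For the tangent Lie algebroid the anchor is the identity, so the opposite $A$-connection $\chD$ is itself a linear connection on $TM$. On vector fields, \eqref{eq:NablaCheck} gives $\chD_v w = D_v w - \TorTM(v,w)$, and on functions both satisfy $\chD_v f = v \cdot f = D_v f$ (the former via the defining formula $\rho a \cdot f$, with $\rho = \id$ here). When $\TorTM = 0$, the two connections therefore agree on $C^\infty(M)$ and on $\frakX(M)$, and since both extend as derivations compatible with contraction and tensor product, they must coincide on every tensor field. In particular $\chD \Pi = D \Pi$, so axiom (H1$_{\mathrm{Poi}}$) is equivalent to $D \Pi = 0$, i.e.~to $D$ being a Poisson connection in the sense of \cite{BayenEtAl:1978}.

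Equivalently, and more concretely, one can simply quote the identity
\begin{equation*}
  (\chD_v\Pi)(\alpha,\beta)
  = (D_v\Pi)(\alpha,\beta)
  - \bigl\langle \alpha, \TorTM(\Pish\beta,v) \bigr\rangle
  + \bigl\langle \beta, \TorTM(\Pish\alpha,v) \bigr\rangle
\end{equation*}
that was established in the course of proving Theorem~\ref{thm:TangentTorsion}: the vanishing of $\TorTM$ reduces this to $\chD\Pi = D\Pi$, and the proposition follows. No real obstacle is expected; the statement is essentially a bookkeeping consequence of material already in hand, and the only care needed is to make the extension-by-derivation argument precise, which is standard.
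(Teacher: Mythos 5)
Your proposal is correct and follows essentially the same route as the paper: the paper's proof simply notes that Equation~\eqref{eq:NablaCheck} gives $\chD = D$ when the torsion vanishes, so that $\chD\Pi = 0$ is equivalent to $D\Pi = 0$. Your extra care in extending the identification $\chD = D$ from vector fields to all tensors via the derivation property is exactly the step the paper leaves implicit, and your alternative via the displayed identity from the proof of Theorem~\ref{thm:TangentTorsion} is a valid shortcut to the same conclusion.
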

\begin{proof}
Since $D$ is torsion free, Equation~\eqref{eq:NablaCheck} shows that $\chD = D$. It follows that condition (i), which is $\chD \Pi = 0$, is equivalent to $D\Pi = 0$.
\end{proof}

It was shown in Theorem~2.20 of \cite{Vaisman:Lectures} that a Poisson connection exists if and only if $\Pi$ is regular. As corollary of Proposition~\ref{prop:PoissConn1} it then follows that the tangent algebroid is Poisson anchored by a torsion-free connection if and only if $\Pi$ is regular. 

It follows from Corollary~\ref{cor:CotanPoissonAnchor} that the cotangent Lie algebroid of a Poisson manifold is Poisson anchored by the dual of every torsion-free connection, in particular by the dual of a Poisson connection. However, Proposition~\ref{prop:PoissConn1} and Theorem~\ref{thm:WeaklyHamEquiv} imply that the $T^* M$-torsion of the dual of a Poisson connection vanishes. It then follows from Equation~\eqref{eq:TorsionPi}, that $T^*M$ cannot have a bracket-compatible momentum section with respect to the dual of a Poisson connection unless $\Pi = 0$.

In \cite[Sec.~III]{Vaisman:1991} the notion of contravariant derivatives on vector bundles over a Poisson manifold was introduced. They are the same as $T^* M$-connections of the cotangent Lie algebroid of the Poisson manifold. In \cite[Sec.~2.5]{Fernandes:2000} a $T^* M$-connection $\nabla$ on $TM$ was called Poisson if $\nabla \Pi = 0$. In this terminology we can rephrase the condition for $T^*M$ to be Poisson anchored in the following way.

\begin{Proposition}
\label{prop:PoissConn2}
Let $(M,\Pi)$ be a Poisson manifold. Let $D$ be a connection on $T^* M$. The following are equivalent:
\begin{itemize}

\item[(i)] The cotangent Lie algebroid is Poisson anchored by $D$.

\item[(ii)] The opposite $T^* M$-connection $\check{D}$ is Poisson.

\end{itemize}
\end{Proposition}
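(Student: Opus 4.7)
The statement is essentially a translation between two notational conventions, so the plan is to unwind each side to the same equation $\check{D}\Pi=0$ and observe they coincide.

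First I would verify that the opposite $T^*M$-connection $\check{D}$ really is a $T^*M$-connection on $TM$ in the sense required to apply Fernandes' definition. From $\check{D}_\alpha v = [-\Pish\alpha, v] - \Pish(D_v \alpha)$ together with the $C^\infty(M)$-linearity of the anchor $\rho=-\Pish$, a short computation in which the two terms $-(v\cdot f)\Pish\alpha$ and $+(v\cdot f)\Pish\alpha$ cancel shows that $\check{D}_{f\alpha}v = f\,\check{D}_\alpha v$. Hence $\check{D}$ qualifies as a contravariant derivative on $TM$ in the sense of \cite[Sec.~III]{Vaisman:1991} and \cite[Sec.~2.5]{Fernandes:2000}.

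Next I would compare the two extensions of $\check{D}$ to the bivector $\Pi \in \Gamma(\wedge^2 TM)$. In the present paper, $\check{D}$ is extended to $(p,q)$-tensors by requiring the Leibniz rule with respect to tensor products and duality pairings (see Equations~\eqref{eq:DcheckPairing} and the Remark following~\eqref{eq:DcheckPairing}). In Fernandes' terminology, a $T^*M$-connection on $TM$ is extended to $\wedge^\bullet TM$ by $\check{D}_\alpha(v\wedge w) = (\check{D}_\alpha v)\wedge w + v \wedge (\check{D}_\alpha w)$. Both extensions are derivations of the tensor algebra agreeing on $\Gamma(TM)$, so they coincide on $\Gamma(\wedge^2 TM)$, and in particular they give the same meaning to $\check{D}\Pi$.

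Finally, axiom (H1$_\mathrm{Poi}$) of Definition~\ref{def:HamLAPoisson} literally states that $A=T^*M$ is Poisson anchored by $D$ if and only if $\check{D}\Pi = 0$, which, by the preceding paragraph, is the same as saying $\check{D}$ is a Poisson $T^*M$-connection in the sense of \cite[Sec.~2.5]{Fernandes:2000}. Thus (i) $\Leftrightarrow$ (ii). There is no real obstacle; the only thing worth flagging carefully is the compatibility of the two extension conventions, which I would do explicitly to avoid any ambiguity.
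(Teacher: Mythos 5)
Your proposal is correct and matches the paper's treatment: the paper offers no separate argument for this proposition, presenting it as an immediate rephrasing since being Poisson anchored means $\check{D}\Pi=0$ by (H1$_{\mathrm{Poi}}$), which is literally Fernandes' definition of a Poisson $T^*M$-connection applied to $\check{D}$. The extra checks you flag (that $\check{D}$ is $C^\infty(M)$-linear in the $T^*M$-argument and that the two extension conventions to bivectors agree) are sound and harmless, just more explicit than the paper.
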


\section{Interpretation in terms of Poisson structures on $A^*$}
\label{sec:Cabrera}

After a talk on hamiltonian Lie algebroids over presymplectic manifolds given in Banff in 2017 by one of the authors, Alejandro Cabrera proposed interpreting the compatibility conditions between a symplectic structure on $M$, a connection on a Lie algebroid $A$ over $M$, and a section of $A^*$ in terms of a certain pair of Poisson structures on $A^*$.  Since the natural setting for his idea involves a (not necessarily symplectic) Poisson structure on $M$, the present paper is a natural setting for investigating his proposal.  This section is devoted to just that.

Given a connection on $A$, the dual connection on $A^*$ allows one to lift the Poisson structure $\Pi$ on $M$ to a horizontal bivector field $\widehat\Pi$ on the manifold $A^*$.  There, one also has the natural Lie-Poisson structure $\Pi_A$ such as one has on the dual of any Lie algebroid.  In this context, a section $\mu$ of $A^*$ can be seen as a map between manifolds equipped with bivector fields.

Now we can impose compatibility conditions such as $[\widehat\Pi,\Pi_A]=0$ or that $\widehat\Pi + \Pi_A$ be a Poisson structure (somewhat different, because $\widehat\Pi$ might not be a Poisson structure if the connection is not flat).   Then we can compare these compatibility conditions with ours.  Given such compatibility, to see whether a section $\mu$ of $A^*$ is a (bracket compatible) momentum section, we can consider the behavior of $\mu$ with respect to the bivector fields on its domain and codomain.

\begin{Example}
For an action Lie algebroid with the trivial connection, the sections with covariant derivative zero correspond to elements of $\frakg$.  The horizontal lift  $\widehat\Pi$ is just the product of $\Pi$ with the zero Poisson structure on $\frakg^*$.  On the other hand, the algebroid Lie-Poisson structure $\Pi_A$ (which ignores the Poisson structure on $M$) 
is the sum of the algebra Lie-Poisson structure $\Pi_A '$ in the $\frakg^*$ direction, which clearly commutes with $\widehat\Pi$, and 
the ``mixed structure'' $\Pi_A''$ defined by the action (i.e.~the anchor of the action Lie algebroid). Either version of the condition of Cabrera thus reduces to  $[\widehat\Pi,\Pi_A'']=0$, which (as in the definition itself of Poisson anchoring), depends only on the action and not on the bracket structure in the Lie algebra.    

Now it is not hard to see that  $[\widehat\Pi,\Pi_A'']=0$ if and only if the Lie derivative of $\Pi$ by the anchor applied to any constant section of the action algebroid is 0, which is just the condition that the algebroid be Poisson anchored with respect to the trivial connection, i.e., that the Lie algebra action be a Poisson action.

Next, we ask when a map $\mu:M\to \mathfrak g ^*$, thought of as a section of $A^*$, is a Poisson map, where $A^*$ carries the Poisson structure $\widehat\Pi + \Pi_A$. The functions on  $A^*$  which are affine on fibres (which are enough for our purposes, since they suffice to test bivector fields)  are generated  by two kinds.  The first, which we will call ``vertical" (V), are the linear functions on fibres coming from fixed elements $a$ of $\frakg$.  The second are ``horizontal" (H); these are the pullbacks by the bundle projection of functions $f$ on $M.$  By abuse of notation, we will also denote the corresponding functions on $A^*$ by $a$ and $f$.  Thus, we can write the Poisson bracket relations on $A^*$ as
\begin{subequations}
\label{eq:sum is Poisson}
\begin{align}
\{f,g\}_{\widehat\Pi + \Pi_A}&=\{f,g\}_\Pi\\
\{a,f\}_{\widehat\Pi + \Pi_A}&=\rho a\cdot f\\
\{a,b\}_{\widehat\Pi + \Pi_A}&=[a,b].
\end{align}    
\end{subequations}
(In the equations above, the Poisson bracket on the right side is that on $M,$ while the one on the left is that on $A^*.$) 

To check whether a section $\mu$ is a Poisson map, it suffices to see whether each of the three bracket relations above is preserved under pullback by $\mu$. For the first (HH) one, this is true for any $\mu$, since the HH part of the Poisson structure on $A^*$ is essentially the structure on $M$.

For the second (VH) equation, we first note that the pulled back function $a\circ \mu$ can be written as the pairing $\langle a,\mu\rangle$ (where $\mu$ in the first expression is a section of $A^*$, while in the second it is a map from $M$  to $\mathfrak{g}^*$).  We thus have to check whether $\{\langle\mu,a \rangle,f\}_\Pi=\rho a \cdot f$ for all Lie algebra elements $a$ and all functions $f$ on $M$.  But that is precisely the condition that $\mu$ be a momentum map (or section).

Finally, the third (VV) equation is satisfied exactly when the momentum map $\mu$ is a Poisson map.

We conclude that the structure given by Equations~\eqref{eq:sum is Poisson} is a Poisson structure on $A^*$ if and only if the action of $\mathfrak g$ preserves the Poisson structure on $M$.  When this is satisfied, the section $\mu$ of $A^*$ is a Poisson map if and only if the action is hamiltonian with momentum map $\mu$.
\end{Example}

We go on from the example of action Lie algebroids to the general case. To begin, we translate the condition that two bivector fields commute with respect to the Schouten bracket into a relation between the corresponding brackets on scalar functions, defined for a bivector field $\Phi$ by
\begin{equation*}
  \{ f, g \}_\Phi := \Phi(df, dg).
\end{equation*}
The bracket is bilinear, antisymmetric, and a derivation in each argument. It satisfies the Jacobi identity if and only if $[\Phi, \Phi] = 0$. 

\begin{Lemma}
\label{lem:SchoutenCom}
Let $\Phi$ and $\Psi$ be bivector fields on $M$. Let 
\begin{equation*}
  C(f,g,h) 
  :=  \{f, \{g,h\}_\Psi \}_\Phi + \{ f, \{g, h\}_\Phi \}_\Psi 
  + \text{cycl.~perm.}
\end{equation*}
for $f, g, h \in C^\infty(M)$. The following are equivalent:
\begin{itemize}

\item[(i)] $[\Phi,\Psi] = 0$

\item[(ii)] $C(f,g,h) = 0$ for all functions $f,g,h$.

\end{itemize}
\end{Lemma}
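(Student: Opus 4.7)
My plan is to reduce the statement to the classical Jacobiator characterization for a single bivector field and then polarize. Recall the well-known identity
\[
\tfrac{1}{2}[\Phi,\Phi](df,dg,dh)
= \{f,\{g,h\}_\Phi\}_\Phi + \{g,\{h,f\}_\Phi\}_\Phi + \{h,\{f,g\}_\Phi\}_\Phi
=: \mathrm{Jac}_\Phi(f,g,h),
\]
which states that $[\Phi,\Phi]=0$ on $M$ if and only if $\mathrm{Jac}_\Phi$ vanishes on all triples of smooth functions (this follows because exact 1-forms span the cotangent bundle pointwise, and $[\Phi,\Phi]$ is a trivector field).

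The key step is to apply this identity to the bivector field $\Phi+\Psi$. By the bilinearity and Leibniz property of the Schouten bracket,
\[
[\Phi+\Psi,\,\Phi+\Psi] = [\Phi,\Phi] + 2\,[\Phi,\Psi] + [\Psi,\Psi].
\]
On the bracket side, the bracket $\{\cdot,\cdot\}_{\Phi+\Psi} = \{\cdot,\cdot\}_\Phi + \{\cdot,\cdot\}_\Psi$ is bilinear in the bivector argument, so expanding $\mathrm{Jac}_{\Phi+\Psi}(f,g,h)$ and grouping terms by the total number of $\Psi$-brackets yields
\[
\mathrm{Jac}_{\Phi+\Psi}(f,g,h)
= \mathrm{Jac}_\Phi(f,g,h) + C(f,g,h) + \mathrm{Jac}_\Psi(f,g,h),
\]
where $C(f,g,h)$ collects precisely the mixed terms (one $\Phi$-bracket and one $\Psi$-bracket, summed over cyclic permutations), matching the expression in the statement.

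Combining the two equations, and using the classical identity for each of $\Phi$, $\Psi$, and $\Phi+\Psi$ separately, we obtain
\[
[\Phi,\Psi](df,dg,dh) = C(f,g,h).
\]
Hence $[\Phi,\Psi]=0$ as a trivector field if and only if $C(f,g,h)=0$ for all smooth $f,g,h$, since exact 1-forms span $T^*M$ pointwise. This establishes (i) $\Leftrightarrow$ (ii).

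The main obstacle is purely bookkeeping: one must verify that the bilinear expansion of $\mathrm{Jac}_{\Phi+\Psi}$ really produces exactly the six terms listed in $C(f,g,h)$ (three from the $\Phi\Psi$ order and three from the $\Psi\Phi$ order), with the correct cyclic structure. This is a direct check once the expansion is written out, but it is where sign or combinatorial errors would enter; no deeper geometric input is needed.
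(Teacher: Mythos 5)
Your proof is correct, but it takes a different route from the paper's. The paper simply asserts that a direct computation (e.g.\ in local coordinates) shows $C(f,g,h)$ equals the trivector field $[\Phi,\Psi]$ evaluated on $df\otimes dg\otimes dh$, and leaves the bookkeeping to the reader. You instead obtain the same identity by polarization: you apply the standard formula $\tfrac{1}{2}[\Phi,\Phi](df,dg,dh)=\mathrm{Jac}_\Phi(f,g,h)$ to $\Phi+\Psi$, use the symmetry $[\Phi,\Psi]=[\Psi,\Phi]$ of the Schouten bracket on bivector fields to expand $[\Phi+\Psi,\Phi+\Psi]=[\Phi,\Phi]+2[\Phi,\Psi]+[\Psi,\Psi]$, and match the cross terms of $\mathrm{Jac}_{\Phi+\Psi}$ with $C(f,g,h)$. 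This buys you a coordinate-free derivation in which the only computation is the (well-known) single-bivector identity, and the precise normalization constant in that identity is irrelevant to the equivalence since any nonzero multiple of $[\Phi,\Psi](df,dg,dh)$ vanishes for all $f,g,h$ iff $[\Phi,\Psi]=0$ (exact $1$-forms span $T^*_mM$ pointwise). The paper's direct approach, on the other hand, delivers the identity $C(f,g,h)=[\Phi,\Psi](df,dg,dh)$ with an explicit constant and is what the authors then use to read off that $C$ is totally antisymmetric and a derivation in each argument --- a fact exploited later in the proof of Theorem~\ref{thm:compatibilityonA*}; note that your identity yields this corollary just as readily, since a trivector field is antisymmetric and $C^\infty$-multilinear in its arguments and $d$ is a derivation.
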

\begin{proof}
It is straightforward to show, e.g.~in local coordinates, that that $C(f,g,h)$ is obtained by applying the 3-vector field $[\Phi,\Psi]$ to $f \otimes g \otimes h$, which implies the equivalence. It also follows that $C(f,g,h)$ is totally antisymmetric and a derivation in each argument.
\end{proof}

We now consider the special case where the underlying manifold is $A^*$, $\Phi = \widehat{\Pi}$ is the horizontal lift of a Poisson bivector on $M$, and $\Psi = \Pi_A$ is the Poisson bivector associated to the Lie algebroid structure on $A^*$. The bracket of $\widehat{\Pi}$ is given by
\begin{subequations}
\label{eqs:PiLiftBrackets}
\begin{align}
\label{bracketwidehatPi}
  \{f, g\}_{\widehat{\Pi}}
  &= \{f,g\}_\Pi
  \\
  \{a, f\}_{\widehat{\Pi}}
  &= D_{X_f} a
  \\
  \{a,b\}_{\widehat{\Pi}} 
  &= \Pi(Da, Db)
  \,,
\end{align}    
\end{subequations}
where $\{f,g\}_\Pi$ is the Poisson bracket on $M$, $X_f = \{\Empty, f\}$ is the hamiltonian vector field generated by $f$, and the notation $\Pi(Da, Db)$ denotes the tensor pairing $\langle Da \otimes Db, \Pi \rangle$. In local coordinates, we have
\begin{equation*}
  \Pi(Da, Db)
  = \Pi^{ij}(D_i a)(D_j b)
  \,,
\end{equation*}
where $D_i= D_{\frac{\partial}{\partial x^i}}$. The bracket of $\Pi_A$ is given by
\begin{subequations}
\label{eqs:PiABrackets}
\begin{align}
  \{f, g\}_{\Pi_A}
  &= 0
  \\
  \{a, f\}_{\Pi_A}
  &= \rho a \cdot f
  \\
  \{a,b\}_{\Pi_A} 
  &= [a, b]
  \,,
\end{align}    
\end{subequations}
where $[~,~]$ is the bracket of $A$ and $\rho$ is the anchor, as usual. For an action Lie algebroid and constant sections $a$, $b$ we retrieve Equations~\eqref{eq:sum is Poisson}. The following is the first of the two main results of this section.

\begin{Theorem}
\label{thm:compatibilityonA*}
Let $A \to M$ be a Lie algebroid equipped with a connection $D$. Let $\Pi$ be a bivector field on $M$. Let $\widehat{\Pi} \in \frakX^2(A^*)$ be the horizontal lift of $\Pi$ and $\Pi_A \in \frakX^2(A^*)$ the Lie algebroid Poisson bivector field. The following are equivalent:
\begin{enumerate}

\item[(i)] The bivector fields commute,
\begin{equation*}
  [\widehat{\Pi}, \Pi_A] = 0
  \,,
\end{equation*}
with respect to the Schouten bracket.

\item[(ii)] The conditions
\begin{align*}
  \chD_a \Pi &= 0  
  \\
  (D_v T_A)(a,b) - R(v, \rho a)b + R(v, \rho b)a
  &= 0
\end{align*}
hold for all $a, b \in A$ and all $v \in \Pish(T^* M)$, where $T_A$ is the $A$-torsion \eqref{eq:Atorsion} and $R(v,w)a = D_vD_w a - D_wD_v a - D_{[v,w]}a$ the curvature.
    
\end{enumerate}
\end{Theorem}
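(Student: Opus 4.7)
The plan is to invoke Lemma~\ref{lem:SchoutenCom}, which reduces the Schouten-commutation $[\widehat{\Pi}, \Pi_A] = 0$ to the vanishing of the cyclic trilinear expression $C(F, G, H)$ on all smooth $F, G, H \in C^\infty(A^*)$. Since $C$ is antisymmetric and a derivation in each argument, it suffices to test it on generators of $C^\infty(A^*)$, which fall into two classes: pullbacks $\pi^* f$ of functions on $M$ (type H), and fiberwise linear functions $\tilde a$ associated to sections $a \in \Gamma(A)$ (type V). By antisymmetry, only four types of triples need to be considered: HHH, VHH, VVH, and VVV.

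The HHH case is immediate: by~\eqref{eqs:PiABrackets} one has $\{f, g\}_{\Pi_A} = 0$, and by~\eqref{eqs:PiLiftBrackets} the bracket $\{f, g\}_{\widehat{\Pi}} = \{f, g\}_\Pi$ is again of type H, so every term in $C(f, g, h)$ vanishes. For the VHH case, I would expand $C(\tilde a, f, g)$ using~\eqref{eqs:PiLiftBrackets} and~\eqref{eqs:PiABrackets}, group the three terms of the form $\rho a \cdot \{f, g\}_\Pi$, $\{g, \rho a \cdot f\}_\Pi$, $-\{f, \rho a \cdot g\}_\Pi$ into the Lie-derivative combination characterizing $\Lie_{\rho a} \Pi$, and identify the remaining terms involving $\rho(D_{X_f} a)$ and $\rho(D_{X_g} a)$ by means of the definitions~\eqref{eq:DcheckDef} and~\eqref{eq:DcheckPairing} of $\chD$. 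A short computation then shows that the total equals $(\chD_a \Pi)(df, dg)$, so that vanishing for all $a, f, g$ is equivalent to the first equation of (ii).

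The VVV case $C(\tilde a, \tilde b, \tilde c)$ produces, on one hand, cyclic sums of $\rho a \cdot \Pi(Db, Dc)$ arising from $\{b, c\}_{\widehat{\Pi}}$ followed by a $\Pi_A$-bracket, and on the other hand cyclic sums of $\Pi(Da, D[b, c])$ arising from $\{b, c\}_{\Pi_A}$ followed by a $\widehat{\Pi}$-bracket. To combine these, I would substitute $[b, c] = D_{\rho b} c - D_{\rho c} b - T_A(b, c)$ from~\eqref{eq:Atorsion}, rewrite iterated covariant derivatives via $D_v D_{\rho b} c = D_{\rho b} D_v c + D_{[v, \rho b]} c + R(v, \rho b) c$, and apply the Jacobi identity for $[\cdot, \cdot]_A$. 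The purely $\Pi$-dependent residue combines with the first cyclic sum into $\chD \Pi$-type terms, which vanish once the first equation of (ii) is in force. What is left is the pairing of the $A$-valued expression $(D_v T_A)(b, c) - R(v, \rho b) c + R(v, \rho c) b$, with $v$ running over $\Pish(T^* M)$, against an arbitrary fiberwise-linear function, which is the second equation of (ii). The remaining VVH case $C(\tilde a, \tilde b, f)$ reduces, modulo the first equation of (ii), to the same curvature-torsion identity applied with $v$ the hamiltonian vector field $X_f$; since such vector fields pointwise span $\Pish(T^* M)$, no additional constraint arises. The main obstacle is the bookkeeping in the VVV case: because $D$ is neither assumed flat nor torsion-free, both the curvature and the $D T_A$ contributions must be extracted simultaneously from the interaction between $[\cdot, \cdot]_A$ and iterated covariant derivatives, and one must verify that all intermediate terms depending only on $\Pi$ and lower-order data cancel against the $\chD\Pi$-type terms coming from the other half of the cyclic sum.
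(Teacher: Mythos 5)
Your proposal follows the paper's proof almost exactly: reduce to the trilinear expression $C$ via Lemma~\ref{lem:SchoutenCom}, test it on the generators of $C^\infty(A^*)$ sorted into horizontal and vertical types, identify the VHH case with $\chD\Pi = 0$, and extract the torsion--curvature identity from the remaining cases. The one place where your account diverges from the paper --- and where it is imprecise as written --- is the division of labour between the VVH and VVV cases. In the paper it is the VVH case $C(f,a,b)$ that delivers the second condition of (ii): modulo $\chD\Pi=0$ it equals the fiberwise-\emph{linear} function on $A^*$ associated to the section $(D_{X_f}T_A)(a,b)-R(X_f,\rho a)b+R(X_f,\rho b)a$, so its vanishing is equivalent to that section vanishing, and the hamiltonian vector fields $X_f$ span $\Pish(T^*M)$ pointwise. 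The VVV case, by contrast, is fiberwise \emph{quadratic}: the torsion--curvature terms appear contracted against the specific linear functions $D_i c$, summed over $i$ and cyclically permuted, not ``against an arbitrary fiberwise-linear function'' as you claim, so one cannot read off the second condition from it directly; the paper accordingly treats VVV as yielding no new condition, only verifying that it vanishes under (ii). Your argument still closes, because you do note that VVH reduces to the same identity with $v=X_f$ and that these span $\Pish(T^*M)$ --- that observation, not the VVV computation, is what actually carries the implication (i)$\Rightarrow$(ii).
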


\begin{Remark}
The first condition in (ii) above is that the connection $D$ be Poisson anchored.  The second is a compatibility condition between the Lie algebroid structure, the connection, and the symplectic leaves of the Poisson structure.  Note that  $T_A$ involves the Lie algebroid bracket as well as the anchor.  It would be interesting to better understand this condition.

In the case of an action Lie algebroid with the trivial connection, the second condition always holds.  Not only is the curvature zero, but the covariant derivative of the $A$-torsion vanishes because the algebroid bracket of constant sections is again constant.  
\end{Remark}

\begin{proof}[Proof of Theorem \ref{thm:compatibilityonA*}]
We define
\begin{equation*}
  C(F,G,H) :=
  \{F, \{G,H\}_{\Pi_A} \}_{\widehat{\Pi}} +
  \{F, \{G,H\}_{\widehat{\Pi}} \}_{\Pi_A}
  + \text{cycl.~perm.}
\end{equation*}
for all $F,G,H \in C^\infty(A^*)$. As noted in the proof of Lemma~\ref{lem:SchoutenCom}, $C(F,G,H)$ is totally antisymmetric and a derivation in each argument. 

The condition $C(F,G,H) = 0$ holds if and only if it holds for $F, G, H$ each being either a function $f,g,h$ on $M$ or a section $a,b,c$ of $A$. For three functions, we have
\begin{equation}
\label{eq:PoissComm2}
  C(f,g,h) = 0
  \,,
\end{equation}
since $\{f,g\}_{\Pi_A}= 0$. 
 
For two functions on $M$ and one section of $A$ we have:
\begin{equation}
\label{eq:PoissComm3}
\begin{split}
  C(f,g,a)
  &=
    \{f, \{g, a\}_{\Pi_A}\}_{\widehat{\Pi}}
  + \{g, \{a, f\}_{\Pi_A}\}_{\widehat{\Pi}}
  + \{a, \{f, g\}_{\Pi_A}\}_{\widehat{\Pi}}
  \\
  &{}\quad
  + \{f, \{g, a\}_{\widehat{\Pi}}\}_{\Pi_A}
  + \{g, \{a, f\}_{\widehat{\Pi}}\}_{\Pi_A}
  + \{a, \{f, g\}_{\widehat{\Pi}}\}_{\Pi_A}
  \\
  &= 
  - \{f, \rho a \cdot g \}_\Pi + \{g, \rho a \cdot f \}_\Pi
  \\
  &{}\quad
  + \rho(D_{X_g} a) \cdot f
  - \rho(D_{X_f} a) \cdot g
  + \rho a \cdot \{f,g\}_\Pi
  \\
  &=
  - \Pi(d f, d \Lie_{\rho a} g)
  + \Pi(d g, d \Lie_{\rho a} f)
  \\
  &{}\quad
  + \bigl\langle df, \rho(D_{X_g} a) \bigr\rangle
  - \bigl\langle dg, \rho(D_{X_f} a) \bigr\rangle
  + \rho a \cdot \Pi(df,dg)
  \\
  &= 
  - \langle \Lie_{\rho a} dg , \Pish df \rangle
  + \bigl\langle dg, \rho(D_{\Pish df} a) \bigr\rangle  
  \\
  &{}\quad
  + \langle \Lie_{\rho a} df , \Pish dg \rangle
  - \bigl\langle df, \rho(D_{\Pish dg} a) \bigr\rangle  
  + \rho a \cdot \Pi(df,dg)
  \,.
\end{split}
\end{equation}
For all sections $a$ of $A$, 1-forms $\beta$ on $M$, and vector fields $v$ on $M$, we have
\begin{equation}
\label{eq:PoissComm4}
\begin{split}
  \langle \chD_a \beta, v \rangle
  &=
  \rho a \cdot \langle \beta, v \rangle
  - \langle \beta, \chD_a v \rangle
  \\
  &=
  \langle \Lie_{\rho a} \beta, v \rangle
  + \langle \beta, \Lie_{\rho a} v \rangle
  - \langle \beta, \chD_a v \rangle
  \\
  &=
  \langle \Lie_{\rho a} \beta, v \rangle
  - \bigl\langle \beta, \rho( D_{v} a ) \bigr\rangle
  \,.
\end{split}
\end{equation}
Using this relation, Equation~\eqref{eq:PoissComm3} can be written as
\begin{equation*}
\begin{split}
  C(f,g,a)
  &= 
  - \langle \chD_a dg, \Pish df \rangle
  + \langle \chD_a df, \Pish dg \rangle
  + \rho a \cdot \Pi(df,dg)
  \\
  &=  (\chD_a \Pi)(df, dg)
  \,.
\end{split}
\end{equation*}
We conclude that the condition  $C(f,g,a) = 0$ holds for all $f$, $g$, and $a$, if and only if 
\begin{equation}
\label{eq:PoissComm4b}
  \chD \Pi = 0
  \,;
\end{equation}
i.e., if and only if $A$ is Poisson anchored with respect to $D$.

For one function on $M$ and two sections of $A$, we have:
\begin{equation}
\label{eq:PoissComm5}
\begin{split}
  C(f,a,b)
  &=
    \{f, \{a, b\}_{\Pi_A}\}_{\widehat{\Pi}}
  + \{a, \{b, f\}_{\Pi_A}\}_{\widehat{\Pi}}
  + \{b, \{f, a\}_{\Pi_A}\}_{\widehat{\Pi}}
  \\
  &{}\quad
  + \{f, \{a, b\}_{\widehat{\Pi}}\}_{\Pi_A}
  + \{a, \{b, f\}_{\widehat{\Pi}}\}_{\Pi_A}
  + \{b, \{f, a\}_{\widehat{\Pi}}\}_{\Pi_A}
  \\
  &= 
  -D_{X_f} [a,b] 
  + D_{X_{\rho b \cdot f}} a 
  - D_{X_{\rho a \cdot f}} b
  \\
  {}&\quad
  + \{f, \Pi^{ij}(D_i a)(D_j b) \}_{\Pi_A}
  + [a, D_{X_f} b] - [b, D_{X_f}a]
  \,.
\end{split}
\end{equation}
Using the derivation property of the brackets and that $\{ f, \Pi^{ij} \}_{\Pi_A} = 0$, we obtain
\begin{equation*}
\begin{split}
  \{f, \Pi^{ij}(D_i a)(D_j b) \}_{\Pi_A}
  &=
    \Pi^{ij} \{f, (D_i a)\}_{\Pi_A} (D_j b)
  + \Pi^{ij} (D_i a) \{f, (D_j b)\}_{\Pi_A}
  \\
  &=
  - \Pi^{ij} \bigl( \rho(D_i a) \cdot f \bigr) (D_j b)
  - \Pi^{ij} (D_i a) \bigl( \rho(D_j b) \cdot f \bigr)
\end{split}    
\end{equation*}
From Equation~\eqref{eq:PoissComm4}, we get
\begin{equation*}
  \rho(D_i a) \cdot f
  = \partial_i \cdot (\rho a  \cdot f) 
  - \langle \chD_a df, \partial_i \rangle
  \,,
\end{equation*}
so that
\begin{equation*}
  - \Pi^{ij} \bigl( \rho(D_i a) \cdot f \bigr) (D_j b)
  = D_{X_{\rho a \cdot f}} b 
  + \Pi(\chD_a df, Db)
  \,.
\end{equation*}
Inserting this in Equation~\eqref{eq:PoissComm5}, we obtain
\begin{equation}
\label{eq:PoissComm5b}
\begin{split}
  C(f,a,b)
  &=   -D_{X_f} [a,b] 
  + [D_{X_f}a, b] + [a, D_{X_f}b]
  \\
  &{}\quad
  + \Pi(\chD_a df, Db)
  + \Pi(Da, \chD_b df )
  \,.
\end{split}    
\end{equation}
In order to rewrite the second line, we will use the relation
\begin{equation}
\label{eq:PoissComm5c}
\begin{split}
  \Pi(\chD_a \beta, \gamma)
  &= \rho a \cdot \Pi(\beta, \gamma) - \Pi(\beta, \chD_a \gamma) 
  - (\chD_a \Pi)(\beta, \gamma)
  \\
  &= \rho a \cdot \langle \gamma, \Pish \beta \rangle 
  - \langle \chD_a \gamma, \Pish \beta \rangle 
  - (\chD_a \Pi)(\beta, \gamma)
  \\
  &= \langle \gamma, \chD_a \Pish \beta \rangle 
  - (\chD_a \Pi)(\beta, \gamma)
  \,,
\end{split}    
\end{equation}
which holds for all sections $a$ of $A$ and 1-forms $\beta$, $\gamma$ on $M$. Using this relation, Equation~\eqref{eq:PoissComm5b} can be written as
\begin{equation}
\label{eq:PoissComm6}
\begin{split}
  C(f,a,b)
  &=   
  -D_{X_f} [a,b] 
  + [D_{X_f}a, b] + [a, D_{X_f}b]
  - D_{\chD_a X_f} b + D_{\chD_b X_f} a
  \\
  &{}\quad
  - (\chD_a \Pi)(df, Db)
  + (\chD_b \Pi)(df, Da)     
  \,.
\end{split}    
\end{equation}
Let $T_A$ be the $A$-torsion of $D$, $R$ the (usual) curvature of $D$, and $v$ a vector field on $M$. Then
\begin{equation}
\label{eq:PoissComm6b}
\begin{split}
  (D_v T_A)(a,b)
  &= D_v\bigl( D_{\rho a} b - D_{\rho b} a - [a,b] \bigr)
  \\
  &{}\quad
  - D_{\rho( D_v a)} b + D_{\rho b} D_v a + [D_v a, b]
  \\
  &{}\quad
  - D_{\rho a} D_v b + D_{\rho( D_v b)} a + [a, D_v b]
  \\
  &= - D_v[a,b] + [D_v a, b] + [a, D_v b]
  \\
  &{}\quad
  + R(v, \rho a)b + D_{[v, \rho a]} b - D_{\rho(D_v a)} b
  \\
  &{}\quad  
  - R(v, \rho b)a - D_{[v, \rho b]} a + D_{\rho(D_v b)} a
  \\
  &= 
  - D_v[a,b] + [D_v a, b] + [a, D_v b]
  - D_{\chD_a v} b + D_{\chD_b v} a 
  \\
  &{}\quad
  + R(v, \rho a)b - R(v, \rho b)a
  \,.
\end{split}    
\end{equation}
Comparing this with Equation~\eqref{eq:PoissComm6}, we get
\begin{equation}
\label{eq:PoissComm7}
\begin{split}
  C(f,a,b)
  &= (D_{X_f} T_A)(a,b) - R(X_f, \rho a)b + R(X_f, \rho b)a
  \\
  &{}\quad
  - (\chD_a \Pi)(df, Db)
  + (\chD_b \Pi)(df, Da)     
  \,.
\end{split}    
\end{equation}
We conclude that, under the assumption that $\chD\Pi = 0$, the condition $C(f,a,b) = 0$ holds for all $f$, $a$, and $b$ if and only if 
\begin{equation*}
  (D_{X_f} T_A)(a,b) - R(X_f, \rho a)b + R(X_f, \rho b)a
  = 0
\end{equation*}
holds. Since the hamiltonian vector fields $X_f$ span the distribution $\Pish(T^* M)$, this is the second of conditions~(ii) of the theorem.

Finally, for three sections of $A$, we have
\begin{equation}
\label{eq:PoissComm10}
\begin{split}
  C(a,b,c)
  &=
    \{a, \{b, c\}_{\Pi_A}\}_{\widehat{\Pi}}
  + \{a, \{b, c\}_{\widehat{\Pi}}\}_{\Pi_A}
  + \text{c.p.}
  \\
  &= 
   \Pi\bigl(Da, D[b,c]\bigr) 
  + \{a, \Pi(Db, Dc)\}_{\Pi_A}
  + \text{c.p.}
  \\
  &= 
    \Pi^{ij}(D_i c)\bigl( D_j [a,b]\bigr) 
  + \{a, \Pi^{ij}(D_i b)(D_j c)\}_{\Pi_A}
  + \text{c.p.}
  \\
  &= 
  - (D_i c) \bigl(D_{X_{x^i}} [a,b]\bigr) 
  + \{a, \Pi^{ij}(D_i b)(D_j c)\}_{\Pi_A}
  + \text{c.p.}
  \,, 
\end{split}
\end{equation}
where we have used in the third line that we can permute $a$,$b$, and $c$ cyclically in the first term. In the last step, we have used that the hamiltonian vector field is defined by $X_f = \{\Empty, f\} = - \Pish df$, so that $X_{x^i} = - \Pi^{ij}\partial_j$ and $\Pi^{ij}D_j = - D_{X_{x^i}}$. Using the derivation property of the Poisson brackets, we get for the second term
\begin{equation}
\label{eq:PoissComm11}
\begin{split}
  \{a, \Pi(Db, Dc)\}_{\Pi_A}
  &= 
  \{a, \Pi^{ij} (D_i b)(D_j c)\}_{\Pi_A}
  \\
  &= 
  \{a, \Pi^{ij} (D_i b)\}_{\Pi_A} (D_j c)
  + (D_i b) \{a, \Pi^{ij} (D_j c) \}_{\Pi_A}
  \\
  &{}\quad
  - \{a, \Pi^{ij}\}_{\Pi_A}(D_i b)(D_j c)
  \\
  &= 
  [a, D_{X_{x^j}} b] (D_j c)
  - (D_i b) [a, D_{X_{x^i}} c]
  - (\rho a \cdot \Pi^{ij})(D_i b)(D_j c)
  \,.  
\end{split}
\end{equation}
For the last term on the right side of Equation~\eqref{eq:PoissComm11} we have the relation
\begin{equation*}
\begin{split}
  \rho a \cdot \Pi^{ij}
  &=
  \rho a \cdot \Pi(dx^i, dx^j)
  \\
  &=
  (\chD_a \Pi)(dx^i, dx^j)
  + \Pi(\chD_a dx^i, dx^j)
  + \Pi(dx^i, \chD_a dx^j)
  \\
  &=
    \langle dx^j,  \chD_a \Pish dx^i \rangle  
  - \langle dx^i,  \chD_a \Pish dx^j \rangle  
  - (\chD_a\Pi)(dx^i, dx^j)
  \\
  &=
  - \langle dx^j, \chD_a X_{x^i} \rangle  
  + \langle dx^i, \chD_a X_{x^j} \rangle  
  - (\chD_a\Pi)(dx^i, dx^j)
  \,,
\end{split}    
\end{equation*}
where we have used Equation~\eqref{eq:PoissComm5c}. With this, the last term on the right side of Equation~\eqref{eq:PoissComm11} takes the form
\begin{equation}
\label{eq:PoissComm11b}
  (\rho a \cdot \Pi^{ij})(D_ib)(D_j c)
  =
  - (D_{\chD_a X_{x^i}} c)(D_i b)  
  + (D_{\chD_a X_{x^i}} b)(D_i c)
  - (\chD_a \Pi)(Db, Dc)
  \,.
\end{equation}
By inserting first~\eqref{eq:PoissComm11} and then~\eqref{eq:PoissComm11b} into \eqref{eq:PoissComm10}, we obtain
\begin{equation*}
\begin{split}
  C(a,b,c)
  &=
  - (D_i c)\bigl( D_{X_{x^i}} [a, b]\bigr) 
  + [a, D_{X_{x^j}} b] (D_j c)
  - (D_i b) [a, D_{X_{x^i}} c]
  \\
  &{}\quad
  + (D_{\chD_b X_{x^i}} a)(D_i c)  
  - (D_{\chD_a X_{x^i}} b)(D_i c)
  \\
  &{}\quad  
  + (\chD_a \Pi)(Db, Dc)
  + \text{c.p.}
  \\
  &= 
  \bigl(- D_{X_{x^i}} [a, b]
      + [D_{X_{x^i}}a, b] 
      + [a, D_{X_{x^i}} b]
      - D_{\chD_a X_{x^i}} b  
      + D_{\chD_b X_{x^i}} a
  \bigr)(D_i c)
  \\
  &{}\quad  
  + (\chD_a \Pi)(Db, Dc)
  + \text{c.p.}
  \\
  &=
  \bigl( (D_{X_{x^i}} T_A)(a,b) 
  - R(X_{x^i}, \rho a)b + R(X_{x^i}, \rho b)a 
  \bigr)(D_i c)
  \\
  &{}\quad
  + (\chD_a \Pi)(Db, Dc)
  + \text{c.p.}  
  \,,
\end{split}
\end{equation*}
where in the last step we have used Equation~\eqref{eq:PoissComm6b} for $v = X_{x^i}$. We conclude that $C(a,b,c) = 0$ does not yield a new condition, and our proof is complete.
\end{proof}

Our second result in this section concerns sections $\mu$ of $A^*$ as candidates for (bracket compatible) momentum sections with respect to a connection which is Poisson anchored. We will equip $A^*$ with the bivector field $\widehat{\Pi}+{\Pi_A},$ which, as we noted earlier, is generally not a Poisson structure if the connection is not flat.  We may then ask when $\mu$ is a ``Poisson" map, i.e. when it pulls back brackets on functions for $\widehat{\Pi}+{\Pi_A}$ on $A^*$ to those for $\Pi$ on $M$. 

\begin{Definition}
\label{def:BivectorMap}
Let $(M,\Phi)$ and $(N,\Psi)$ be pairs consisting of a manifold and a bivector field on the manifold. A smooth map $\phi: M \to N$ will be called a \textbf{bivector map} if
\begin{equation*}
  \phi^* \{f,g\}_\Psi = \{\phi^* f, \phi^* g \}_\Phi
\end{equation*}
for all $f,g \in C^\infty(N)$.
\end{Definition}

\begin{Theorem}
\label{thm:momentumsectionasPoissonmap}
Let $\Pi$ be a Poisson bivector field on $M$. Let $A \to M$ be a Lie algebroid equipped with a Poisson anchored connection $D$. Let $\hat{\Pi}$ be the horizontal lift of $\Pi$ to $A^*$ and $\Pi_A$ the Lie algebroid Poisson bivector field on $A^*$. A section $\mu$ of $A^*$ is a bracket compatible momentum section if and only if it is a bivector map from $(M,\Pi)$ to $(A^*,\widehat{\Pi}+\Pi_A)$.
\end{Theorem}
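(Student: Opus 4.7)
The plan is to reduce the bivector map condition to checking brackets on a generating subset of functions on $A^*$, following the strategy used in the example of action Lie algebroids preceding the theorem. First I would introduce the two classes of generators: the \emph{horizontal} functions $\pi^* f$ for $f \in C^\infty(M)$ (pullbacks via the bundle projection $\pi \colon A^* \to M$) and the \emph{vertical} functions $\ell_a$, the fibrewise-linear functions on $A^*$ associated to sections $a \in \Gamma(A)$; these two classes together generate a subalgebra of $C^\infty(A^*)$ sufficient to test bivector fields. The section $\mu$ pulls them back by $\mu^*(\pi^* f) = f$ and $\mu^* \ell_a = \langle \mu, a \rangle$, and the bracket on $A^*$ associated to $\widehat{\Pi} + \Pi_A$ is computed on these generators by simply adding the formulas~\eqref{eqs:PiLiftBrackets} and~\eqref{eqs:PiABrackets}.

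With these preliminaries in place, I would split the bivector map condition into three cases. The (HH) case is automatic, since $\{\pi^* f, \pi^* g\}_{\widehat{\Pi} + \Pi_A} = \pi^*\{f,g\}_\Pi$ pulls back to $\{f,g\}_\Pi$. For the (VH) case the combined bracket is $\ell_{D_{X_f} a} + \pi^*(\rho a \cdot f)$, which pulls back to $\langle \mu, D_{X_f} a\rangle + \rho a \cdot f$; setting this equal to $\{\langle \mu, a\rangle, f\}_\Pi = X_f \cdot \langle \mu, a\rangle$, applying the Leibniz rule~\eqref{eq:DPairing} to the right-hand side, and using $X_f = -\Pish df$ together with $\Pi(\alpha,\beta) = \langle \Pish\alpha, \beta\rangle$, the condition should collapse to $\rho a \cdot f = df\bigl(\Pish\langle D\mu, a\rangle\bigr)$ for all $f$ and $a$, which is exactly axiom (H2$_{\mathrm{Poi}}$).

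The (VV) case is the substantive one. Assuming (H2$_{\mathrm{Poi}}$), I would compute that the pullback $\mu^*\{\ell_a, \ell_b\}_{\widehat{\Pi} + \Pi_A}$ equals $\Pi(\langle \mu, Da\rangle, \langle \mu, Db\rangle) + \langle \mu, [a,b]\rangle$, while the other side $\{\langle \mu, a\rangle, \langle \mu, b\rangle\}_\Pi$ expands, via the Leibniz identity $d\langle \mu, a\rangle = \langle D\mu, a\rangle + \langle \mu, Da\rangle$, into four terms. The purely horizontal piece $\Pi(\langle \mu, Da\rangle, \langle \mu, Db\rangle)$ will match between the two sides, while the two cross-terms---after invoking $\Pish\langle D\mu, a\rangle = \rho a$ to turn $\Pi(\langle D\mu, a\rangle, \langle \mu, Db\rangle)$ into $\langle \mu, D_{\rho a} b\rangle$ and similarly for the symmetric term---combine to give $\langle \mu, D_{\rho a} b - D_{\rho b} a\rangle$. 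After rearrangement the condition becomes $\langle \mu, \TorA(a,b)\rangle = -\Pi(\langle D\mu, a\rangle, \langle D\mu, b\rangle)$, which by Proposition~\ref{prop:H3PoissonTorsion} is precisely bracket compatibility (H3$_{\mathrm{Poi}}$).

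The main obstacle is bookkeeping rather than substance: the sign conventions $X_f = -\Pish df$ and $\Pi(\alpha,\beta) = \langle \Pish\alpha, \beta\rangle$ have to be applied consistently, and the Leibniz decomposition of $d\langle \mu, a\rangle$ must be inserted carefully into the (VV) expansion so that the quadratic-on-fibre piece $\Pi(\langle \mu, Da\rangle, \langle \mu, Db\rangle)$ cancels between the two sides, leaving only the expression identified in Proposition~\ref{prop:H3PoissonTorsion}. I note that the Poisson-anchored hypothesis is not strictly necessary for this equivalence; it merely fixes the natural setting in which $\widehat{\Pi} + \Pi_A$ enjoys the favourable compatibility properties established in Theorem~\ref{thm:compatibilityonA*}.
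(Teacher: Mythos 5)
Your proposal is correct and follows essentially the same route as the paper's proof: testing the bracket relation on the horizontal and vertical generators of $C^\infty(A^*)$, recovering (H2$_{\mathrm{Poi}}$) from the VH case, and reducing the VV case (assuming (H2$_{\mathrm{Poi}}$)) to the torsion form of bracket compatibility via Proposition~\ref{prop:H3PoissonTorsion}. Your closing observation is also accurate: the Poisson-anchored hypothesis is not invoked in the paper's argument either, serving only to place the statement in the setting of Theorem~\ref{thm:compatibilityonA*}.
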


\begin{proof}
As in the proof of Theorem~\ref{thm:compatibilityonA*}, we can consider separately those functions on $A^*$ that come from functions $f,g$ on $M$ and those that come from sections $a,b$ of $A$. By adding Equations~\eqref{eqs:PiLiftBrackets} and Equations~\eqref{eqs:PiABrackets}, we obtain
\begin{subequations}
\begin{align}
\label{eq:muBrackets01}
  \{f, g\}_{\widehat{\Pi}+\Pi_A}
  &= \{f,g\}_\Pi
  \\
\label{eq:muBrackets02}
  \{a, f\}_{\widehat{\Pi}+\Pi_A}
  &= D_{X_f} a +\rho a \cdot f
  \\
\label{eq:muBrackets03}
  \{a,b\}_{\widehat{\Pi}+\Pi_A} 
  &= \Pi(Da, Db) + [a,b]
  \,.
\end{align}    
\end{subequations}
Equation~\eqref{eq:muBrackets01} shows that for $f$ and $g$ there are no conditions on $\mu$ to be a bivector map, just as in the case of an action Lie algebroid with the trivial connection.

For $a$ and $f$, we deduce from Equation~\eqref{eq:muBrackets02} that for $\mu$ to be a bivector map,
\begin{equation}
\label{eq:muBrackets04}
  (D_{X_f} a )\circ\mu + \rho a \cdot f = \{ \langle\mu,a\rangle,f\}_\Pi
\end{equation}
must be satisfied. This equation can be rewritten as
\begin{equation*}
\begin{split}
  \langle \rho a, df \rangle
  &=
  \rho a \cdot f
  \\
  &= 
  \{ \langle\mu,a\rangle,f\}_\Pi - (D_{X_f} a )\circ\mu
  \\
  &= X_f \cdot \langle\mu, a \rangle - \langle\mu, D_{X_f} a \rangle
  \\
  &= \langle D_{X_f} \mu, a \rangle
  \\
  &= \Pi\bigl( \langle D\mu, a\rangle, df \bigr)
  \,,
\end{split}
\end{equation*}
where we have used the Definition~\eqref{eq:DPairing} of the dual connection. Since this relation must hold for all functions $f$, we see that~\eqref{eq:muBrackets04} is equivalent to condition $(\mathrm{H2_{Poi}})$ of Definition~\ref{def:HamLAPoisson}, the condition for $\mu$ to be a momentum section.

Finally, for $a$ and $b$, we deduce from Equation~\eqref{eq:muBrackets03} that for $\mu$ to be a bivector map, we must have 
\begin{equation}
\label{eq:muBrackets05}
  \bigl( \Pi(Da, Db) + [a,b]\bigr)\circ \mu 
  = \{\langle \mu,a \rangle,\langle \mu,b  \rangle\}_\Pi
  \,.
\end{equation}
The left side of Equation~\eqref{eq:muBrackets05} can be written as
\begin{equation*}
  \bigl( \Pi(Da, Db) + [a,b]\bigr)\circ \mu 
  = \Pi\bigl( \langle\mu, Da\rangle, \langle\mu, Db\rangle \bigr)
  + \langle\mu, [a,b] \rangle
  \,.
\end{equation*}
Assuming that $(\mathrm{H2_{Poi}})$ holds, the right side of Equation~\eqref{eq:muBrackets05} can be written as
\begin{equation*}
\begin{split}
  \{\langle \mu,a \rangle,\langle \mu,b  \rangle\}_\Pi
  &=
  \Pi\bigl( d\langle \mu,a \rangle, d\langle \mu,b\rangle \bigr)
  \\
  &=
  \Pi\bigl( 
    \langle D\mu, a \rangle + \langle \mu, Da \rangle, 
    \langle D\mu, b \rangle + \langle \mu, Db \rangle \bigr)
  \\
  &=
    \Pi\bigl(\langle D\mu, a \rangle, \langle D\mu, b \rangle \bigr)
  + \langle \mu, D_{\rho a} b \rangle
  - \langle \mu, D_{\rho b} a \rangle
  \\
  &\quad{}
  + \Pi\bigl(\langle \mu, Da \rangle, \langle \mu, Db \rangle \bigr)
  \,.
\end{split}
\end{equation*}
Using the last two equations, Equation~\eqref{eq:muBrackets05} takes the form
\begin{equation*}
  0 = \Pi\bigl(\langle D\mu, a \rangle, \langle D\mu, b \rangle \bigr)
  + \langle\mu, \TorA(a,b) \rangle
  \,,
\end{equation*}
which is the torsion form~\eqref{eq:H3PoissonTorsion} of $(\mathrm{H2_{Poi}})$.
\end{proof}

\bibliographystyle{halpha}
\bibliography{PoissHamLAs}

\end{document}